\newtheorem{thm}{Theorem}
\newtheorem{lem}{Lemma}
\newtheorem{defn}{Definition}
\newtheorem{cor}{Corollary}
\newtheorem{rem}{Remark}
\newtheorem{prop}{Proposition}
\newtheorem{exa}{Example}
\title{On the probability of linear separability through intrinsic volumes}
\author{Felix Kuchelmeister}
\DeclareMathOperator*{\argmin}{arg\,min}
\DeclareMathOperator*{\spa}{span}
\DeclareMathOperator*{\relint}{relint}
\begin{document}

\maketitle
\newpage
\paragraph{Abstract}
A dataset with two labels is linearly separable if it can be split into its two classes with a hyperplane.
This inflicts a curse on some statistical tools (such as logistic regression) but forms a blessing for others (e.g. support vector machines).
Recently, the following question has regained interest:
What is the probability that the data are linearly separable?

We provide a formula for the probability of linear separability for Gaussian features and labels depending only on one marginal of the features (as in generalized linear models).
In this setting, we derive an upper bound that complements the recent result by \cite*{hayakawa2021estimating}, and a sharp upper bound for sign-flip noise.

To prove our results, we exploit that this probability can be expressed as a sum of the intrinsic volumes of a polyhedral cone of the form $\spa\{v\}\oplus[0,\infty)^n$, as shown in \citep{candes2020phase}.
After providing the inequality description for this cone, and an algorithm to project onto it, we calculate its intrinsic volumes.
In doing so, we encounter Youden's demon problem, for which we provide a formula following \cite{kabluchko2020absorption}. 
The key insight of this work is the following:
The number of correctly labeled observations in the data affects the structure of this polyhedral cone, allowing the translation of insights from geometry into statistics.

\newpage
\tableofcontents
\newpage

\section{Introduction}
What is the probability that a dataset with $n$ independent observations and binary labels $(x_j,y_j)$ in $\mathbb{R}^p\times\{-1,+1\}$ can be separated into its classes?
Statisticians and mathematicians have faced this problem in several forms.
Despite the problem's long history dating back to the 1960s \citep{cover1965geometrical}, there have been breakthrough results in recent years, in particular by \cite*{wagner2001continuous,amelunxen2014living,candes2020phase,hayakawa2021estimating} and indirectly, \cite{kabluchko2020absorption}.
Building on these recent results, we provide several results about the probability of linear separability.

The main results of this paper are Theorems \ref{thm_prob_separation} and Theorem \ref{thm_bound_dimension}.
Theorem \ref{thm_prob_separation} gives a formula for the probability of linear separability in generalized linear models (GLM).
Suppose that $x_j\sim\mathcal{N}(0,I_p)$ and the distribution of $y_j$ only depends on $x_j$ through $x_j^T\beta^*$ for some $\beta^*\in \mathbb{R}^p\setminus\{0\}$.
Let $\delta:=\mathbb{P}[yx^T\beta^*>0]$, define $v_j:=y_jx_j^T\beta^*/\|\beta^*\|_2$ and $P:=\spa\{v\}\oplus[0,\infty)^n$.
Let $A_N$ be the event that the first $N$ elements of $v$ are positive, and the rest negative.
By Theorem \ref{thm_prob_separation}, the probability of linear separability without intercept is:
\[
S(p,n)=
\delta^n+(1-\delta)^n+2\sum_{N=1}^{n-1}\sum_{j\geq 1\text{ odd}}{n\choose N}\delta^N(1-\delta)^{n-N}\nu_{n-p+1+j}(P|A_N),
\]
where $\nu_{k}(P|A_N)$ denotes the expected $k$-th intrinsic volume of $P$ given $A_N$.
The biggest technical contribution of this work is that we provide a formula for these intrinsic volumes $\nu_{k}(P|A_N)$.

Theorem \ref{thm_bound_dimension} gives an upper bound for the probability of linear separability.
Suppose that $x_j\sim\mathcal{N}(0,I_p)$, and that for some $\beta^*\in S^{p-1}$, the labels $y_j$ only depend on $x_j$ through $x_j^T\beta^*$.
Suppose that for some $t\geq0$, 
\[
\frac{2(p-1)}{n}+\left(4+\frac{1}{\sqrt{2}}\right)\sqrt{\frac{t}{n}}\leq \min\{\delta,1-\delta\}.
\]
Then, by Theorem \ref{thm_bound_dimension} the probability of linear separability without intercept is at most:
\[
S(n,p)\leq 3\exp\left(-t\right).
\]

We proceed as follows:
First, we motivate the study of this problem in Section \ref{sec_int_motivation}.
Then, we define the statistical models we will work with in Section \ref{sec_int_models}.
We review the existing literature including the recent contributions in Section \ref{sec_int_existing}.
In Section \ref{sec_int_cont}, we summarize our contributions to this literature.
Throughout this work, observations $(x_j,y_j)$ are taken as i.i.d.
Moreover, we take $p<n$, since if $p\geq n$ and the matrix with entries $x_{j,k}$ has full rank, the probability of linear separability is $1$.

\subsection{Motivation}\label{sec_int_motivation}

In generalized linear models (GLM) \cite{nelder1972generalized} for binary data, a parameter $\beta^*\in\mathbb{R}^p$ is estimated using an empirical risk minimizer:
\begin{equation}\label{eq_glm}
\hat{\beta}:=\argmin_{\beta\in\mathbb{R}^p}\frac{1}{n}\sum_{j=1}^n l(y_jx_j^T\beta),
\end{equation}
where $l$ is some loss function.
For many popular choices of $l$, the estimator $\hat{\beta}$ is well-defined if and only if the data are not linearly separable.
For example, this is the case for logistic regression \citep{silvapulle1981existence}.

This provides a problem for the study of such estimators, as they are not well-defined with positive probability.
One way to work around this issue is to study the estimator asymptotically, where the probability is known to exhibit a sharp phase transition and to be either $0$ or $1$
\citep*{sur2019likelihood,sur2019modern,zhao2022asymptotic}.
However, this is not the case in finite samples.
To study finite sample GLMs, one can constrain the domain of \eqref{eq_glm} to a ball of an arbitrarily large radius to make the estimator well-defined \citep{kuchelmeister2023finite}.
So, understanding when the data are not linearly separable with high probability allows us to study \eqref{eq_glm} in finite samples and 
in high-dimensional asymptotics.

If we extend the domain in \eqref{eq_glm} to allow for vectors of infinite length, the problem becomes well posed if the data are linearly separable.
The solution then coincides with all vectors of infinite length, that linearly separate the data.
Therefore, it coincides with the set of minimizers of the 0/1 loss $\sum_{j=1}^n1\{y_jx_j\beta<0\}$ of infinite length.
A penalized version of \eqref{eq_glm}, say with $+\lambda\|\beta\|_2^2$ for a very small $\lambda$, behaves as the max-margin estimator \citep*{rosset2003margin}.
Minimizers of the 0/1-loss and margin maximizing estimators have been studied extensively, e.g. in \citep*{massart2006risk,balcan2013active,bousquet2020proper}.
So, knowledge in which regimes the data are linearly separable with high probability gives insights into the behavior of the direction of $\hat{\beta}$.

Some estimators benefit from the circumstance that the data are linearly separable, or entirely rely on it.
For example, the hard-margin support vector machine (SVM) \citep{cortes1995support} is defined, if and only if the data are separable, opposite to the estimator in \eqref{eq_glm}.
Consequentially, such estimators are often analyzed on datasets that are inherently linearly separable (see e.g. \citep*{bousquet2020proper}), or which are separable with high probability \citep*{montanari2019generalization}.

Linear separability of the data may also bring computational benefits.
For example, it is known that finding a minimizer of the 0/1-loss is an NP-complete problem \citep*{johnson1978densest,feldman2012agnostic}.
However, if the data are linearly separable, a solution can be found efficiently - for example, using linear programming.

\subsection{Definitions and models}\label{sec_int_models}
Here, we provide some core definitions and introduce the statistical models we will work with.
We first define the probability of linear separability, distinguishing whether an intercept is used in Section \ref{sec_intro_defn}.
Then, we introduce class imbalance in Section \ref{sec_class_imbalance}, which alone can make the data separable with high probability.
Finally, Section \ref{sec_models} introduces our assumptions on the labels $y$.

\subsubsection{Linear separation with or without intercept}\label{sec_intro_defn}
We say that a dataset with observations $(x_1,y_1),\ldots,(x_n,y_n)$ in $\mathbb{R}^n\times\{-1,+1\}$ is linearly separable, if there exists a $\beta\in \mathbb{R}^p$, such that for all $j\in\{1,\ldots,n\}$,
\begin{equation}\label{eq_intro_sign}
sign(x_j^T\beta)=y_j.
\end{equation}
Here, we define $sign(z)$ as $1$ if $z>0$, $-1$ if $z<0$ and $0$ if $z=0$.
Equivalently, there exists a $\beta\in\mathbb{R}^p$ such that for all $j\in\{1,\ldots,n\}$, it holds that $y_jx_j^T\beta>0$.
We remark that variants of this definition exist in the literature (see Appendix \ref{app_defn}). 
For our purposes, we do not need to distinguish between them, as they are almost surely equivalent see Remark \ref{rem_equivalence}.
We arrive at the following definition:

\begin{defn}
The probability of linear separability without intercept is:
\begin{equation}\label{eq_defn_separate}
S(n,p):=\mathbb{P}\left[\bigcup_{\beta\in \mathbb{R}^p}\bigcap_{j=1}^n y_jx_j^T\beta>0\right].
\end{equation}
The probability of linear separability with intercept is:
\begin{equation}\label{eq_defn_separate_0}
S_0(n,p):=\mathbb{P}\left[\bigcup_{(\beta_0,\beta)\in \mathbb{R}^{p+1}}\bigcap_{j=1}^n y_j(\beta_0+x_j^T\beta)>0\right].
\end{equation}
\end{defn}

Using the transformed covariates $\tilde{x}_j:=(1,x_j^T)^T$, some results about $S(n,p+1)$ can be translated to results about $S_0(n,p)$.
However, there are other cases, where the intercept makes an enormous difference, see Remark \ref{rem_imbalance}.
From now on we distinguish between the probability of linear separability with/without intercept.

\subsubsection{Class imbalance}\label{sec_class_imbalance}
One of the parameters that may influence the probability of linear separability is class imbalance.
We say that there is \textit{class balance}, if:
\begin{equation}\label{eq_defn_balanced}
b:=\mathbb{P}[y=1]=\mathbb{P}[y=-1]=\frac{1}{2}.
\end{equation}
If instead $b\neq 1/2$, we say that there is \textit{class imbalance}.
In Remark \ref{rem_imbalance} below we discuss the importance of class imbalance to the probability of linear separability.

\begin{rem}\label{rem_imbalance}
Class imbalance and the inclusion/exclusion of an intercept can affect the probability of linear separability.
While this follows from Proposition \ref{prop_prob_intercept}, we give a simple example to illustrate this point here.
Suppose that $x_j\sim\mathcal{N}(0,I)$ and $y_j=1$ almost surely, in the notation of \eqref{eq_defn_balanced}, $b=1$.
Then, it is easy to check that the probability of linear separability \textit{with} intercept is 1.
At the same time, Cover's Theorem \eqref{eq_cover} shows us that the probability \textit{without} intercept is $2^{n-1}\sum_{k=0}^{p-1}{n-1\choose k}$.
Compare this to the case where the $y_j$ are independent of the $x_j$ and there is class balance, $b=1/2$.
Then, by \eqref{eq_cover} the probability with intercept would be $2^{n-1}\sum_{k=0}^{p}{n-1\choose k}$, while the probability without intercept would remain unchanged.
So, depending on the underlying model, class imbalance and the inclusion/exclusion of an intercept affect the probability of linear separability to a varying degree.
\end{rem}

\subsubsection{Statistical models}\label{sec_models}
Here, we present several statistical models, for which we present results about $S(n,p)$ and $S_0(n,p)$.
Throughout, we have $n$ i.i.d. paired random variables $x_j,y_j$ taking values in $\mathbb{R}^p\times\{-1,+1\}$, where $x_j$ are the \textit{features} (also called \textit{covariates}) and $y_j\in\{-1,+1\}$ are the labels.
Here, we discuss how the pairs of features $x_j$ and labels $y_j$ are related.
We introduce the models with intercept, the models without intercept are defined analogously.

\subsubsection*{Halfspace learning model}
From a statistical learning point of view, one can see $sign(\beta_0+x^T\beta)$ as the prediction of the vector $(\beta_0,\beta)$ at the point $x$.
So, $(\beta_0,\beta)$ predicts the label $y$ correctly, if $y(\beta_0+x^T\beta)>0$.
In this model, we assume that there is a unique vector $(\beta_0^*,\beta^*)\in\mathbb{R}^{p+1}$, which makes the fewest mistakes in expectation.
Letting $S^{p-1}:=\{\beta\in\mathbb{R}^p:\|\beta\|_2=1\}$, we define:\footnote{
Definition \eqref{eq_oracle} can exhibit degenerate behavior if there exists a $(\beta_0,\beta)$ such that $x^T\beta+\beta_0=0$ with nonzero probability.
This only affects us in the discussion of \eqref{eq_satoshi_upper}.}
\begin{equation}\label{eq_oracle}
(\beta_0^*,\beta^*):=\argmin_{(\beta_0,\beta)\in \mathbb{R}\times S^{p-1}}\mathbb{P}[y(\beta_0+x^T\beta)<0]
\end{equation}
In other words, $(\beta_0^*,\beta^*)$ is the vector that is most likely to predict the label $y$.

The vector $(\beta_0^*,\beta^*)$ has many names, such as \textit{ground truth}, \textit{target} or \textit{oracle}.
In inferential statistics, one often views the labels as generated by $(\beta_0^*,\beta^*)$, at a point $x$.
Then, $sign(\beta^*_0+x^T\beta^*)$ is in some sense the `correct label', and $y$ is viewed as a noisy version of $sign(\beta^*_0+x^T\beta^*)$.
We can then say that the \textit{probability of observing a correct label} is:
\begin{equation}\label{eq_defn_delta}
\delta:=\mathbb{P}[y(\beta^*_0+x^T\beta^*)>0],
\end{equation}
whereas $1-\delta$ is the probability of observing a \textit{`wrong label'}.

\subsubsection*{Generalized linear model}
In generalized linear models \citep{nelder1972generalized}, it is assumed that the labels $y$ only depend on $x$ through the marginal $x^T\beta^*$.\footnote{In generalized linear models, it is classically assumed that the conditional probability $y|x$ is an invertible function of $\beta_0^*+x^T\beta^*$.
We do not use this assumption.}
\begin{equation}\label{eq_dependence_xTbeta}
\mathbb{P}[y=1|x]=\mathbb{P}[y=1|x^T\beta^*].
\end{equation}
In other words, all information in $x$ about $y$ is contained in $x^T\beta^*$.
Typically, the labels in these models can be generated as follows:
\begin{equation}\label{eq_logit}
y:=sign(\beta_0^*+x^T\beta^*+\epsilon),
\end{equation} 
where $\epsilon$ is independent of $x$.
If $\epsilon$ follows a logistic distribution, this gives the logit model, if it follows a Gaussian distribution, the probit model.
Under the assumption \eqref{eq_dependence_xTbeta}, if the features $x$ follow a Gaussian distribution, then $y$ is independent of all $x^T\beta$ for $\beta$ orthogonal to $\beta^*$.
For example, if $\beta^*=e_1$, then:
\begin{equation}\label{eq_gaussian_trick}
y_jx_j=y_jx_{j,1}+y_jx_{j,2:p}\sim y_jx_{j,1}+x_{j,2:p},
\end{equation}
where $\sim$ means equivalence in distribution.
Our analysis will heavily rely on the decomposition \eqref{eq_gaussian_trick}.

\subsubsection*{Sign-flip noise}
Given the `oracle' $(\beta_0^*,\beta^*)$ from \eqref{eq_oracle}, we can define the probability that one observes a `correct label' $y$, i.e. a label that agrees with the oracle.
This probability is constant for all $x$ in the \textit{sign-flip noise} model.
Formally:
\begin{equation}\label{eq_sign_flip}
\mathbb{P}[y=1|x]=\begin{cases}
\delta&\text{if }\beta_0^*+x^T\beta^*>0,\\
1-\delta&\text{if }\beta_0^*+x^T\beta^*\leq 0.
\end{cases}
\end{equation}
This data can be generated as follows.
First, generate a `correct', uncorrupted label $sign(x^T\beta^*)$ at a point $x$, and then `flip' it with probability $1-\delta$.
The condition \eqref{eq_sign_flip} is a special case of the condition \eqref{eq_dependence_xTbeta}.
Conditional on the event that we have a correct observation $y_j=x_j^T\beta^*$, the random variable $y_jx_j^T\beta^*$ has the same distribution as $|x^T_j\beta^*|$.
This simplifies some of our expressions (see Remark \ref{rem_sign_flip}).

Although the sign-flip model is relatively easy to analyze for our purposes, it is quite restrictive.
For example, if the distribution of $x$ is symmetric, then there is class balance in the sign-flip model.

\subsubsection*{Signal-less imbalanced model}
The simplest model we study is the one where $y$ and $x$ are independent:
\begin{equation}\label{eq_intercept}
\mathbb{P}[y=1|x]=\mathbb{P}[y=1]=:b
\end{equation}
This model allows us to isolate how class imbalance affects the probability of linear separability.
We have already seen in Remark \ref{rem_imbalance}, that the intercept plays a crucial role here.

\subsection{Existing results on the linear separability problem}\label{sec_int_existing}

The literature on the probability of linear separability dates from the earliest discoveries in the 1960s to the recent breakthroughs in the 2020s.
Here, we present several of these existing results.
First, we state the classical identity attributed to Cover, the universal lower and upper bounds, and related finite sample results.
We mention a line of research on high-dimensional asymptotics, which shows that the probability asymptotically exhibits a sharp phase transition.
Finally, we present the argument that relates the probability of linear separability to the intrinsic volumes through the kinematic formula.

Although we attempt to mention many relevant results, this is not a complete literature review.
In particular, the interested reader will find more approximations in \cite*{hayakawa2021estimating}.

\subsubsection*{Equivalence to random convex hull problem}
The linear separability problem has a twin in the convex geometry literature:
What is the probability that the convex hull of $n$ i.i.d. points in $\mathbb{R}^p$ contains the origin?
By the hyperplane separation theorem, the convex hull $Conv$ of $n$ points does not contain the origin if and only if the points lie in a homogeneous open half-space.
Consequentially:
\[
\mathbb{P}\left[\bigcup_{\beta\in \mathbb{R}^p}\bigcap_{j=1}^n y_jx_j^T\beta>0\right]
=\mathbb{P}\left[0\not\in Conv(y_1x_1,\ldots,y_nx_n)\right].
\]
This equivalence was already implicitly pointed out in the work of \cite{cover1965geometrical}.
Since then, several great results have been achieved in the literature about the random convex hull problem.
In this section, we state results relying on them, applied to our context.

We remark that there are other equivalent definitions of linear separability, which we do not discuss in detail here.
For example, the data are linearly separable if the convex hull of the points in class $1$ does not intersect the convex hull of the points in class $-1$.\footnote{Without intercept, one could use the positive hull instead of the convex hulls. This was the approach by \cite{silvapulle1981existence}.}
This definition has a natural extension for multiclass classification (see \cite{de2019stochastic}).

\subsubsection{Identities and finite sample bounds}\label{sec_identities_bounds}
\subsubsection*{Identities}
There are not many known identities for the probability of linear separability.
A famous exception is the following:
If the i.i.d. random variables $y_jx_j$ have a symmetric distribution and are almost surely linearly independent, then the probability of linear separability without intercept is:
\begin{equation}\label{eq_cover}
S(n,p)=\frac{1}{2^{n-1}}\sum_{k=0}^{p-1}{n-1\choose k}
\end{equation}
We can apply this result to $y_j(x_j^T,1)^T$ to obtain the analogous identity for the probability of linear separability with intercept.
The result is related to the fact that there are $2\sum_{k=0}^{p-1}{n-1\choose k}$ ways to partition $n$ linearly independent points with homogeneous hyperplanes, a result due to Ludwig Schläfli.
In this form, the result is due to \citep{wendel1962problem}, although several authors independently proved an equivalent result in the early 1960s, see \cite{cover1965geometrical}.
Some authors refer to \eqref{eq_cover} as \textit{Cover's Theorem}.

Besides \eqref{eq_cover}, there are few known identities for the probability of linear separability.
An exception is the identity below by \cite{bruckstein1985monotonicity}.
They exploit that in the one-dimensional case, the problem has a much simpler structure than in the higher-dimensional case.

Finally, an identity for the case $y_jx_j\sim\mathcal{N}(\mu,I_p)$ follows from the work by \cite{kabluchko2020absorption}.

\subsubsection*{Lower bounds}
\cite{bruckstein1985monotonicity} conjectured that the probability of linear separability is always at least as large as in \eqref{eq_cover} (see also \citep{cover1987open}).
They gave the first affirmative result for a special case, where the conditional density $x|y$ of the first class is a translation of the conditional density for the second class.
\cite{wagner2001continuous} confirmed the conjecture for all i.i.d. random variables $y_jx_j$, whose density is absolutely continuous with respect to the Lebesgue measure.
Equality is achieved, if and only if the distribution is balanced about the origin \cite[Corollary 3.7.]{wagner2001continuous}.
This result was recently generalized by \cite*{hayakawa2021estimating}.
The assumption of absolute continuity is weakened to the condition $S(p,p)=1$, the weakest possible assumption for which the inequality holds for all $p\leq n$.

While the universal lower bound is a beautiful and strong result, it does not show the dependency on the signal strength.
For the case where $x_j\sim\mathcal{N}(0,I_p)$ and $y$ follows the logit model (see \eqref{eq_logit}), \cite{chardon2024finite} show that if $n\leq c_1pB/\kappa$ for some $\kappa\geq 1$, then the data are linearly separable with probability at least $1-c_2\exp({-c_3\max\{\kappa\sqrt{p},\kappa^2 p/B^2\}})$, where $c_1,c_2,c_3>0$ are constants and $B:=\max\{\|\beta^*\|_2,e\}$ \cite*[Theorem 2]{chardon2024finite}.

\subsubsection*{Upper bounds}
There are some known upper bounds on the probability of linear separability.
In particular, there is a universal upper bound.
In \cite*[Theorem 14]{hayakawa2021estimating}, it is proved that with i.i.d. observations $(x_i,y_i)$, the probability of linear separability without intercept satisfies:
\begin{equation}\label{eq_satoshi_upper}
\inf_{\beta\in S^{p-1}}\mathbb{P}[yx^T\beta<0]\geq 3\frac{p}{n},\quad\Rightarrow \quad S(n,p)< \frac{1}{2^p}.
\end{equation}
Applying this result to $y(1,x^T)^T$, we obtain the analogous bound for the probability of linear separability with intercept.
In the signal-less imbalanced model \eqref{eq_intercept}, $\alpha(y(x^T,1)^T)=1-b$, assuming $b\geq 1/2$.
Consequentially, \eqref{eq_satoshi_upper} also provides insights into how class balance affects the probability of linear separability.

The infimum over all $\mathbb{P}[yx^T\beta<0]$ is referred to as the \textit{Tukey depth} and has a nice interpretation in the halfspace learning model \eqref{eq_oracle}.
Assuming that $x^T\beta=0$ with probability zero for all $\beta\in S^{p-1}$, we get:
\[
\inf_{\beta\in S^{p-1}}\mathbb{P}[yx^T\beta<0]=\mathbb{P}[yx^T\beta^*<0]=1-\delta.
\]
We defined $\delta$ as the probability of observing a `correct' label $y$, i.e. a label $y$ that agrees with the sign of $x^T\beta^*$, see \eqref{eq_defn_delta}.

While \eqref{eq_satoshi_upper} gives a high-probability result if $p$ is large, there is a result that targets the regime where $p$ is small and $n$ is large.
In \cite[Proposition 2.3.1]{kuchelmeister2023finite}, the probit model is studied, in particular \eqref{eq_logit} with $(x^T,\epsilon)^T\sim\mathcal{N}(0,I_{p+1})$.
They prove that for $t>0$, if $(p\log n+t)/n\lesssim1-\delta\lesssim 1$, then $ S(n,p)\leq 4\exp(-t)$.
Here, $\lesssim$ means smaller than up to a constant factor.

After the initial version of this work was made available \citep{kuchelmeister2024probability}, an independent and related approach was proposed by \cite{chardon2024finite}.
While we state our results in terms of the probability of observing a correct label $\delta$, they formulate the dependency on the signal in terms of the norm of the ground truth $\|\beta^*\|_2$, respectively $B:=\max\{\|\beta^*\|_2,e\}$.
For the case where the probability of observing a wrong label $1-\delta$ is small, it is known that $\|\beta^*\|_2(1-\delta)\sim 1$ for the probit model \cite[Section 3.2]{kuchelmeister2023finite} and $\|\beta^*\|_2^2(1-\delta)\sim 1$ for the sign-flip noise model \citep[Lemma 26]{chardon2024finite} where by $a\sim b$ we mean $a\lesssim b$ and $b\lesssim a$.
For the logit model with Gaussian features, \cite[Theorem 1]{chardon2024finite}  shows that for $t>0$:
\begin{equation}\label{eq_chardon_1}
n\gtrsim B(p+t)\quad\Rightarrow\quad S(n,p)\leq \exp(-t).
\end{equation}
They complement this result with \cite[Theorem 3]{chardon2024finite}, which assumes the logit model but relaxes the assumptions on $x$, and carries extra $\log^4(B)$ factor.
For a general halfspace learning model, under some regularity assumptions on the distribution of $(x,y)$, 
\cite[Theorem 4]{chardon2024finite} shows that for $t>0$:
\begin{equation}\label{eq_chardon_2}
n\gtrsim B\log(B)^4(p+Bt) \quad\Rightarrow\quad S(n,p)\leq \exp(-t).
\end{equation}

\subsubsection{High-dimensional asymptotics}\label{sec_hd}
As both the sample size $n$ and the dimension $p$ tend to infinity with $p/n\rightarrow \kappa$, the probability of linear separability exhibits a phase transition depending on this ratio $\kappa$.
It was first observed in the signal-less balanced model, as in \eqref{eq_cover}.
In this case, the probability of linear separability tends to $1$ if $\kappa>1/2$ and to $0$ if $\kappa<1/2$ \citep{winder1963bounds,cover1965geometrical}.

Decades later, \cite{candes2020phase} provided a phase transition for a generalized linear model \eqref{eq_logit} with logit link function, with Gaussian features $x_j$.
Assuming without loss of generality that $\beta^*/\|\beta^*\|_2=e_1$, they showed:
\begin{equation}\label{eq_candes}
\lim_{\substack{n\rightarrow\infty\\ p/n\rightarrow\kappa} }S_0(n,p)=\begin{cases}
0,& \text{if }\kappa<h^*,\\
1,&\text{if }\kappa>h^*,
\end{cases}
\end{equation}
where, for $g\sim\mathcal{N}(0,1)$ independent of $x$ and $y$,
\begin{equation}\label{eq_candes_h}
h^*=\min_{t_0,t_1}\mathbb{E}\left[\max\left\{t_0y+t_1yx_{1}-g,0\right\}^2\right],
\end{equation}
see \cite[Theorem 2.1]{candes2020phase}.
They show similar results for the probability of linear separability without intercept and the signal-less imbalanced model.
In this model, $(\beta^*_0,\beta^*)$ and $\delta$ (the probability of observing a correct label) are in a one-to-one relationship, hence we could write $f(\delta)=h^*$.
While the expression $h^*$ is somewhat involved, some upper bounds are available, including \eqref{eq_satoshi_upper}.

The result in \cite{candes2020phase} was later extended beyond the logistic link function and the Gaussian assumption by several authors
\citep*{montanari2019generalization, tang2020existence,zhao2022asymptotic,deng2022model}.
For us, the work by \cite{candes2020phase} is particularly interesting because it introduced a tool from geometry to our problem: the kinematic formula.
It forms the starting point for this work.

\subsubsection{The kinematic formula}\label{subsub_kinematic}
Here, we introduce the kinematic formula and explain how it relates to the probability of linear separability.
This will lead us to the problem of calculating intrinsic volumes of polyhedral cones of the form $\spa\{v\}\oplus[0,\infty)^n$, which forms the main technical contribution of this work.

In \cite{candes2020phase}, it was observed that in the case where the features $x_j$ are Gaussian, the problem of linear separability has yet another interpretation.
It can be formulated as the probability that a uniformly rotated linear subspace intersects a polyhedral cone.
Such a connection was first exploited in compressed sensing \citep*{amelunxen2014living}.

Before we state the result, we introduce some notation.
For two sets $A,B\subset\mathbb{R}^n$ we denote their Minkowski sum by $A\oplus B:=\{a+b:a\in A,b\in B\}$.
We denote the set of rotations (i.e. the special orthogonal group, the set of matrices $A\in \mathbb{R}^{n\times n}$ with determinant 1 satisfying $A^TA=I_n$) by $SO(n)$.
There exists a (unique) uniform distribution on $SO(n)$ (the Haar measure, see e.g. \cite[Chapter 13]{schneider2008stochastic}).

\begin{prop}\label{prop_candes}
Suppose that $x_j\sim\mathcal{N}(0,I_p)$, and that for some $\beta^*\in\mathbb{R}^p\setminus\{0\}$, the distribution of $y_j$ only depends on $x_j$ through $x_j^T\beta^*$.
Let $v_j:=y_jx_j^T\beta^*/\|\beta^*\|_2$.
Then, the probability of linear separability without intercept is:
\[
S(n,p)=\mathbb{P}\left[\theta L_{p-1}\cap \spa\{v\}\oplus[0,\infty)^n\neq \{0\}\right],
\]
and in the signal-less imbalanced model \eqref{eq_intercept}, the probability of linear separability with intercept is:
\[
S_0(n,p)=\mathbb{P}\left[\theta L_{p}\cap \spa\{y\}\oplus[0,\infty)^n\neq \{0\}\right],
\]
where $L_k$ is a linear subspace of dimension $k$ in $\mathbb{R}^n$, and $\theta$ is a uniform random rotation in $SO(n)$ independent of $y$ and $v$.
\end{prop}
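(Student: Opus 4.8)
The plan is to reduce both identities to a single statement about a column space and then to identify that column space in distribution. By rotational invariance of the isotropic Gaussian I may assume without loss of generality that $\beta^*/\|\beta^*\|_2=e_1$, so that $v_j=y_jx_{j,1}$ and, under the generalized linear model assumption \eqref{eq_dependence_xTbeta}, each $y_j$ is a function of $x_{j,1}$ alone. Let $M$ be the $n\times p$ matrix with rows $y_jx_j^T$. Then $y_jx_j^T\beta>0$ for all $j$ holds for some $\beta$ exactly when $M\beta$ lands in the open positive orthant, i.e. when $\text{colspace}(M)\cap(0,\infty)^n\neq\emptyset$, since $M\beta$ sweeps out all of $\text{colspace}(M)$ as $\beta$ ranges over $\mathbb{R}^p$.

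Next I would determine the law of $\text{colspace}(M)$. Writing $M=[\,v\mid G\,]$, where $v$ is the first column and $G$ collects columns $2,\dots,p$ with entries $G_{jk}=y_jx_{j,k}$, I claim $G$ is a standard Gaussian $n\times(p-1)$ matrix independent of $v$ and $y$. Indeed, conditionally on the first coordinates $(x_{j,1})_j$ and the labels $y$ — which together determine $v$ — the remaining coordinates $x_{j,k}$, $k\geq2$, are i.i.d.\ $\mathcal{N}(0,1)$, and multiplying by $y_j\in\{-1,1\}$ preserves this law; since the conditional law of $G$ does not depend on $(v,y)$, the pair $(v,G)$ has the asserted joint distribution. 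By rotation invariance of $G$ and uniqueness of the Haar measure on $SO(n)$, $\text{colspace}(G)$ is distributed as $\theta L_{p-1}$ for a uniform $\theta\in SO(n)$ independent of $v$, whence $\text{colspace}(M)\stackrel{d}{=}\spa\{v\}+\theta L_{p-1}$.

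It then remains to establish, writing $P:=\spa\{v\}\oplus[0,\infty)^n$, the almost sure equivalence
\[
(\spa\{v\}+\theta L_{p-1})\cap(0,\infty)^n\neq\emptyset \iff \theta L_{p-1}\cap P\neq\{0\},
\]
after which taking probabilities gives the first identity. For the forward direction, if $w=\alpha v+\ell>0$ with $\ell\in\theta L_{p-1}$, then $\ell=w-\alpha v\in[0,\infty)^n\oplus\spa\{v\}=P$, and $\ell\neq0$ because $w$ is strictly positive while $\alpha v$ cannot be — unless $v$ has constant sign, in which case $P=\mathbb{R}^n$ and both sides hold trivially for $p\geq2$. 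For the converse, any nonzero $\ell\in\theta L_{p-1}\cap P$ decomposes as $\ell=\alpha v+c$ with $c\geq0$, so that $c=\ell-\alpha v\in\text{colspace}(M)$ with $c\geq0$; if $c$ is strictly positive we are done, the case $c=0$ forces $v\in\theta L_{p-1}$ which has probability zero, and the remaining boundary case ($c\geq0$ with a vanishing coordinate) is exactly where the two events could differ. I expect this last point to be the main obstacle: one must argue that, because $\theta$ is uniform and independent of $v$, the event that $\theta L_{p-1}$ meets $P$ only along $\partial P$ (tangentially) is null, so that a strictly positive witness exists almost surely — the standard general-position argument of this integral-geometric setting, which also absorbs the degenerate events $v_j=0$ and $v\in\theta L_{p-1}$.

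Finally, I would obtain the second identity by running the same argument on the augmented rows $y_j(1,x_j^T)$. In the signal-less imbalanced model \eqref{eq_intercept} the labels are independent of $x$, so multiplying the $p$ genuine covariate columns by $y_j$ again yields a standard Gaussian $n\times p$ matrix independent of $y$, while the intercept column equals $y$ itself. The identification $\text{colspace}\stackrel{d}{=}\spa\{y\}+\theta L_p$ and the cone equivalence above then deliver $S_0(n,p)=\mathbb{P}[\theta L_p\cap(\spa\{y\}\oplus[0,\infty)^n)\neq\{0\}]$, with $y$ now playing the role previously played by $v$.
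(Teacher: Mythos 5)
Your proposal follows essentially the same route as the paper's: reduce to $\beta^*=e_1$ by rotational invariance, observe that the columns $y_jx_{j,k}$ for $k\geq 2$ form a standard Gaussian $n\times(p-1)$ matrix independent of $(v,y)$ (this is exactly \eqref{eq_gaussian_trick}), and identify its column space with $\theta L_{p-1}$ via uniqueness of the Haar measure; the intercept case is handled the same way. The only substantive divergence is the step you yourself flag as the main obstacle: the almost-sure equivalence between the existence of a strictly positive point of $\spa\{v\}+\theta L_{p-1}$ and the event $\theta L_{p-1}\cap P\neq\{0\}$. The paper closes this gap deterministically in Appendix \ref{app_defn}: Proposition \ref{prop_equivalence} shows that the cone condition \eqref{eq_separation_minkowski} implies non-trivial separability, which in turn implies weak and then strict separability provided any $p$ of the rows $y_jx_j$ are linearly independent, and Remark \ref{rem_equivalence} verifies that last condition almost surely from the conditional absolute continuity of the Gaussian columns. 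Your alternative ending --- conditioning on $v$ and invoking that a Haar-random subspace meets the fixed full-dimensional cone $P$ without meeting its interior only with probability zero --- is a legitimate way to finish (it is the standard touching-probability lemma underlying the kinematic formula, and your observation that a point of $\theta L_{p-1}\cap\mathrm{int}(P)$ yields a strictly positive witness is correct), but as written it is a statement of what remains to be proved rather than a proof; you should either cite that lemma explicitly or run the paper's deterministic linear-independence argument.
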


This result is a refinement of  \cite[Propositions 1 \& 2]{candes2020phase}.
We exploit that there are several (in our case equivalent) definitions of linear separability.
This step leads to a technical tangent, for which we give a detailed discussion in Appendix \ref{app_defn}.
Although we do not prove it, with similar arguments one could show that the probability of linear separability with intercept and signal is:
\[
S_0(n,p)=\mathbb{P}\left[\theta  L_{p-1}\cap \spa\{y\}\oplus\spa\{v\}\oplus[0,\infty)^n\neq \{0\}\right].
\]

\begin{proof}[Proof of Proposition \ref{prop_candes}]
We will only prove the statement without an intercept, the other case is proved analogously.
Without loss of generality, assume that $\|\beta^*\|_2=1$.
Since $x_j$ has a rotationally invariant distribution, we can assume without loss of generality that $\beta^*=e_1$.
Then, using \eqref{eq_gaussian_trick} and Proposition \ref{prop_equivalence} (respectively Remark \ref{rem_equivalence}),
\[
S(n,p)=
\mathbb{P}\left[\bigcup_{\beta\in\mathbb{R}^{p}}\bigcap_{j=1}^n v_j\beta_1+x_{j,2:p}^T\beta_{2:p}>0\right]
\]
\[=\mathbb{P}\left[\spa\{x_{\cdot,j}:j\geq 2\}\cap \spa\{v\}\oplus[0,\infty)^n\neq \{0\}
\right].
\]
Finally, $\spa\{x_{\cdot,j}:j\geq 2\}$ is a random $p-1$ dimensional random linear subspace of $\mathbb{R}^n$.
As the $x_j$ are Gaussian, its distribution is invariant under rotations.
By uniqueness of the Haar measure, it follows that it has the same distribution as $\theta L_{p-1}$.
The proof is complete.
\end{proof}

The identity in Proposition \ref{prop_candes} converts one calculation into another:
We now want to calculate the probability that a random uniform linear subspace of dimension $p-1$ intersects a polyhedral cone, which has a different shape depending on the problem.
This probability can be calculated with the kinematic formula (Theorem \ref{thm_kinematic}), which we introduce next.

Before introducing the kinematic formula, we introduce some definitions from polyhedral geometry (see e.g. \cite{schrijver1998theory}).
A \textit{polyhedron} is a finite intersection of closed half-spaces.
We denote the faces of a polyhedron $P$ by $\mathcal{F}(P)$, and the $k$-faces by $\mathcal{F}_k(P)$.
If it is clear from context, we omit the argument $P$ and simply write $\mathcal{F}$.
A set $C\subset\mathbb{R}^n$ is a \textit{cone}, if for any $\lambda>0$ and $c\in C$, $\lambda c\in C$.
A \textit{polyhedral cone} is a cone that is a polyhedron.
For a set $A\subset \mathbb{R}^n$, we use the notation $\relint A$ for its \textit{relative interior}, i.e. the interior of $A$ with respect to the affine hull of $A$.
In other words, it is the interior of $A$ in the smallest affine subspace that contains $A$. 
The \textit{projection} of $x\in\mathbb{R}^n$ onto a closed convex set $K\subset\mathbb{R}^p$ is defined as the closest point in $K$ to $x$, with respect to the Euclidean norm.
We denote it by $\Pi_K(x)$.
The kinematic formula relates the probability that a cone intersects a random linear subspace with its intrinsic volumes.
These are defined next.

\begin{defn}\label{defn_intrinsic}
Let $C\subset \mathbb{R}^n$ be a polyhedral cone and $g\sim\mathcal{N}(0,1)$.
For any face $F\in\mathcal{F}(C)$, we define:
\[
\nu_F(C):=\mathbb{P}[\Pi_C(g)\in\relint F].
\]
For $k\in\{0,\ldots,n\}$ $k$-th \textit{intrinsic volume} of $C$ is defined as:
\[
\nu_k(C):=\sum_{F\in\mathcal{F}_k(C)}\nu_F(C).
\]
\end{defn}

In other words, the $k$-th intrinsic volume of the polyhedral cone $C$ is the probability that a Gaussian vector when projected onto the cone $C$, lands on any face of dimension $k$.
We are now ready for the kinematic formula.
For a proof see \cite[Theorem 4.3.5]{schneider2022convex}.

\begin{thm}[Kinematic Formula]\label{thm_kinematic}
Let $C\subset\mathbb{R}^n$ be a closed convex cone which is not a subspace.
Let $L_{n-k}$ be a linear subspace of dimension $n-k$ and $\theta$ be a uniform random rotation in $SO(n)$.
Then, 
\[
\mathbb{P}[C\cap \theta L_{n-k}\neq \{0\}]=2\sum_{j\geq 1\text{ odd}}\nu_{k+j}(C).
\]
\end{thm}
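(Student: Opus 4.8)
The plan is to reduce the intersection probability to a sum of \emph{expected} intrinsic volumes of the random intersection $K := C\cap\theta L_{n-k}$, and then feed this into the two-cone conic kinematic formula together with the conic Gauss--Bonnet relation. Writing $\mathbb{P}[C\cap\theta L_{n-k}\neq\{0\}]=\mathbb{E}_\theta[\mathbb{1}\{K\neq\{0\}\}]$, the whole argument hinges on rewriting the indicator $\mathbb{1}\{K\neq\{0\}\}$ as a fixed linear combination of the $\nu_j(K)$ whose coefficients I can then push through the expectation over $\theta$.

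First I would record two pointwise identities valid for a polyhedral cone $K$: the normalization $\sum_{j}\nu_j(K)=1$, immediate from Definition~\ref{defn_intrinsic} since $\Pi_K(g)$ almost surely lands in the relative interior of exactly one face, and the conic Gauss--Bonnet relation $\sum_{j}(-1)^j\nu_j(K)=0$, which holds precisely when $K$ is not a linear subspace. Subtracting these gives $\sum_{j\geq 1\text{ odd}}\nu_j(K)=1/2$ for every non-subspace cone, whereas for $K=\{0\}$ one has $\nu_0(\{0\})=1$ with all odd terms vanishing. Hence, for any cone that is either trivial or not a subspace,
\[
2\sum_{j\geq 1\text{ odd}}\nu_j(K)=\mathbb{1}\{K\neq\{0\}\}.
\]
To apply this I must verify that $K=C\cap\theta L_{n-k}$ is almost surely of this type. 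A short general-position computation shows $\mathrm{lin}(K)=\mathrm{lin}(C)\cap\theta L_{n-k}$, and since $C$ is not a subspace, $\dim\mathrm{lin}(C)<\dim\mathrm{span}(C)$; consequently $\dim\mathrm{lin}(K)$ is almost surely strictly smaller than $\dim K$ whenever $K\neq\{0\}$, so $K$ is almost surely either $\{0\}$ or not a subspace.

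Taking expectations and exchanging sum and integral, it remains to compute $\mathbb{E}_\theta[\nu_j(K)]$ for $j\geq 1$. Here I would invoke the two-cone conic kinematic formula $\mathbb{E}_\theta[\nu_j(C\cap\theta D)]=\sum_{i=j}^{n}\nu_i(C)\,\nu_{n+j-i}(D)$ (valid for $j\geq 1$) and specialize $D=L_{n-k}$. Since a subspace of dimension $n-k$ has $\nu_i(L_{n-k})=\mathbb{1}\{i=n-k\}$, only the term $i=j+k$ survives, giving $\mathbb{E}_\theta[\nu_j(C\cap\theta L_{n-k})]=\nu_{j+k}(C)$. Combining this with the displayed indicator identity yields
\[
\mathbb{P}[C\cap\theta L_{n-k}\neq\{0\}]=2\sum_{j\geq 1\text{ odd}}\mathbb{E}_\theta[\nu_j(K)]=2\sum_{j\geq 1\text{ odd}}\nu_{k+j}(C),
\]
which is the claim.

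The hard part is the two-cone kinematic formula itself, which is the genuinely deep input: deriving it from scratch requires spherical integral geometry --- a Hadwiger-type characterization of rotation-invariant conic valuations, or equivalently a spherical Crofton formula --- and this is where essentially all the work lives; I would cite it (Schneider--Weil) rather than reprove it. The only other delicate points are the restriction to $j\geq 1$ (the $\nu_0$ term is special, as it carries the atom of projection mass at the apex and the naive formula fails there, as one already sees for a half-line in $\mathbb{R}^2$), and the almost-sure general-position claim, both of which are routine once isolated.
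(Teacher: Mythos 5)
The paper does not actually prove Theorem~\ref{thm_kinematic}: it is imported as background with a pointer to \cite[Theorem 4.3.5]{schneider2022convex}. Your argument is, in substance, the standard derivation of the conic Crofton formula given in that reference and in \cite{schneider2008stochastic}: the pointwise identity $2\sum_{j\geq 1\text{ odd}}\nu_j(K)=1\{K\neq\{0\}\}$ (from $\sum_j\nu_j=1$ and the conic Gauss--Bonnet relation $\sum_j(-1)^j\nu_j=0$ for non-subspaces), fed into the two-cone kinematic formula with $\nu_i(L_{n-k})=1\{i=n-k\}$, which correctly isolates the term $i=j+k$ and gives $\mathbb{E}_\theta[\nu_j(C\cap\theta L_{n-k})]=\nu_{j+k}(C)$. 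The index bookkeeping and the final assembly are right, and citing rather than reproving the two deep inputs (the kinematic formula for two cones and Gauss--Bonnet) is consistent with how the paper itself treats the whole statement.

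Two caveats. First, the general-position step is the one place where you assert rather than argue: the identification $\mathrm{lin}(C\cap\theta L_{n-k})=\mathrm{lin}(C)\cap\theta L_{n-k}$ is correct, but the inference from ``$C$ is not a subspace'' to ``$C\cap\theta L_{n-k}$ is almost surely either $\{0\}$ or not a subspace'' requires a genuine transversality lemma (it is exactly the touching/general-position lemma that accompanies the kinematic formula in \cite{schneider2008stochastic}); the dimension count you sketch does not by itself rule out the event that $\theta L_{n-k}$ meets $C$ only inside its lineality space. Second, Definition~\ref{defn_intrinsic} in this paper defines intrinsic volumes only for polyhedral cones, while the theorem is stated for general closed convex cones; your proof is fine for the polyhedral cones the paper actually uses, but for full generality one needs the extended definition (or an approximation argument). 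Both points are standard, but they are where the remaining content of the proof lives.
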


So, Theorem \ref{thm_kinematic} together with Proposition \ref{prop_candes} show that we can express the probability of linear separability through the intrinsic volumes of polyhedral cones such as $\spa\{v\}\oplus[0,\infty)^n$.
According to \cite[p. 98]{schneider2022convex} it is in general not possible to compute the intrinsic volumes of polyhedral cones.
Yet, there are some known successful examples, including \citep{amelunxen2012intrinsic,kabluchko2020absorption,godland2022conic}.

Before we proceed, we consider some approximations related to the kinematic formula.
The intrinsic volumes form a probability distribution on $\{0,\ldots,n\}$.
Therefore, we can define the random variable $\nu(C)$, which takes the value $k$ with probability $\nu_k(C)$.
A simple monotonicity argument \cite[Lemma 4.3.3]{schneider2022convex} gives:
\begin{equation}\label{eq_tailbounds}
\mathbb{P}[\nu(C)\geq k+1]\leq 
\mathbb{P}[C\cap \theta L_{n-k}\neq \{0\}]\leq \mathbb{P}[\nu(C)\geq k]
\end{equation}
So, the probability of linear separability behaves as the tail probability of the random variable $\nu(C)$.
Its mean $\delta(C):=\sum_{k=0}^nk\nu_k(C)$ is referred to as the \textit{statistical dimension} of $C$.
It is known that $\delta(C)$ behaves as the squared Gaussian mean-width of $C$, see \cite*[Proposition 10.2]{amelunxen2014living}.
Moreover, the random variable $\nu(C)$ concentrates around its mean, $\delta(C)$.
The following is a Bernstein-type inequality, a consequence of \cite[Corollary 4.10]{mccoy2014steiner}, where a stronger Bennett-type inequality is provided.
It is an improvement over an earlier Bernstein-type inequality, provided in \cite*[Theorem 6.1]{amelunxen2014living}.

\begin{thm}[\cite{mccoy2014steiner}]\label{thm_mccoy}
Let $C\subset\mathbb{R}^n$ be a closed convex cone.
For each $t\geq 0$, the intrinsic volume random variable $\nu(C)$ satisfies:
\[
\mathbb{P}\left[\nu(C)-\delta(C)\geq t\right]
\leq \exp\left(
-\frac{t^2/4}{(\delta(C)+t/3)\wedge(\delta(C^\circ)-t/3)}
\right),
\]
\[
\mathbb{P}\left[\nu(C)-\delta(C)\leq -t\right]
\leq \exp\left(
-\frac{t^2/4}{(\delta(C)-t/3)\wedge(\delta(C^\circ)+t/3)}
\right).
\]
\end{thm}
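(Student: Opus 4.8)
The plan is to obtain both tails from a single Chernoff argument applied to an exact formula for the moment generating function (MGF) of $\nu(C)$, to relax the resulting Bennett-type exponent into the stated Bernstein form, and finally to exploit the polarity duality $\nu_k(C)=\nu_{n-k}(C^\circ)$ to produce the minimum appearing in each denominator. Since the statement is explicitly a consequence of the Bennett-type inequality in \cite[Corollary 4.10]{mccoy2014steiner}, one legitimate route is simply to import that inequality and perform only the last relaxation; I will instead sketch the full chain, flagging where the genuine difficulty lies.

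First I would establish the exact MGF identity. The engine is the \emph{master Steiner formula for cones} (the mechanism behind Corollary 4.10), which writes the joint law of the squared projection norms as a mixture indexed by the intrinsic volumes:
\[
\mathbb{E}\!\left[f\!\left(\|\Pi_C(g)\|_2^2,\ \|\Pi_{C^\circ}(g)\|_2^2\right)\right]
=\sum_{k=0}^n \nu_k(C)\,\mathbb{E}\!\left[f(Q_k,Q'_{n-k})\right],
\]
where $g\sim\mathcal{N}(0,I_n)$ and $Q_k\sim\chi^2_k$, $Q'_{n-k}\sim\chi^2_{n-k}$ are independent. Choosing the test function $f(u,v)=e^{su+rv}$ and tying the parameters by $\tfrac{1-2r}{1-2s}=e^{2\lambda}$ collapses the chi-squared MGFs $(1-2s)^{-k/2}(1-2r)^{-(n-k)/2}$ into a factor $e^{\lambda k}$ up to a $k$-independent constant, so that
\[
\mathbb{E}\!\left[e^{\lambda\nu(C)}\right]=(1-2r)^{n/2}\,\mathbb{E}\!\left[e^{\,s\|\Pi_C(g)\|_2^2+r\|\Pi_{C^\circ}(g)\|_2^2}\right].
\]

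Next I would bound the right-hand side. Using the Moreau/Pythagorean identity $\|\Pi_C(g)\|_2^2+\|\Pi_{C^\circ}(g)\|_2^2=\|g\|_2^2$ together with $\mathbb{E}\|\Pi_C(g)\|_2^2=\delta(C)$ and $\mathbb{E}\|\Pi_{C^\circ}(g)\|_2^2=\delta(C^\circ)$, one controls the projected MGF and arrives at a Poisson/Bennett-type cumulant estimate of the shape $\log\mathbb{E}[e^{\lambda(\nu(C)-\delta(C))}]\le \tfrac{1}{2}\delta(C)(e^{2\lambda}-1-2\lambda)$ for $\lambda>0$, which carries variance proxy $2\delta(C)$ and a scale of order one. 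A Chernoff bound $\mathbb{P}[\nu(C)-\delta(C)\ge t]\le \inf_{\lambda>0}e^{-\lambda t}\mathbb{E}[e^{\lambda(\nu(C)-\delta(C))}]$, optimized over $\lambda$, yields a Bennett exponent, which relaxes to the branch $\exp\!\left(-\tfrac{t^2/4}{\delta(C)+t/3}\right)$ via the elementary inequality $(1+u)\log(1+u)-u\ge \tfrac{u^2/2}{1+u/3}$.

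Finally, duality closes the argument. Because $\nu_k(C)=\nu_{n-k}(C^\circ)$ we have $\nu(C^\circ)=n-\nu(C)$ and $\delta(C^\circ)=n-\delta(C)$, so the upper-tail event $\{\nu(C)-\delta(C)\ge t\}$ coincides with the lower-tail event $\{\nu(C^\circ)-\delta(C^\circ)\le -t\}$; applying the same machinery to $C^\circ$ produces the competing branch $\exp\!\left(-\tfrac{t^2/4}{\delta(C^\circ)-t/3}\right)$, and taking the smaller of the two exponents gives the minimum in the denominator. The lower-tail statement follows by the symmetric computation. I expect the one genuinely hard step to be the exact MGF identity, that is, justifying the master Steiner formula and its chi-squared mixture representation, which rests on the full conic integral-geometric machinery; by comparison the Chernoff optimization, the Bennett-to-Bernstein relaxation, and the duality bookkeeping are routine.
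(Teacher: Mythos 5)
Your proposal is correct in outline and takes the same route as the paper, which proves nothing itself but defers entirely to \cite[Corollary 4.10]{mccoy2014steiner}; your chain (master Steiner formula, tied-parameter MGF identity, Bennett-type cumulant bound, Bernstein relaxation, and the duality $\nu(C^\circ)=n-\nu(C)$, $\delta(C^\circ)=n-\delta(C)$ turning the upper tail for $C$ into the lower tail for $C^\circ$ and producing the minimum in the denominator) is exactly the architecture of that cited proof. The only soft spot is the passage from the MGF identity to the cumulant bound $\tfrac{1}{2}\delta(C)\left(e^{2\lambda}-1-2\lambda\right)$, which in McCoy--Tropp requires more than the Moreau identity and the mean computation, but since the paper imports the result wholesale this is not a gap relative to the paper.
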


Above, $\wedge$ denotes the minimum, and $C^\circ$ is the polar of the cone $C$.
For the proof, we refer the reader to \cite[Corollary 4.10]{mccoy2014steiner}.
As $\delta(C),\delta(C^\circ)\leq n$, this gives the upper bound $\exp(-t^2/4n)$ in both cases.
Using Theorem \ref{thm_kinematic}, the approximations \eqref{eq_tailbounds} and Theorem \ref{thm_mccoy} one can obtain upper and lower bound on the probability of linear separability.
For any $t\geq 0$, we get:
\begin{equation}\label{eq_approx_kin_up}
\frac{\delta(C)}{n}\leq 1-\frac{p}{n}-2\sqrt{\frac{t}{n}}\quad
\Rightarrow\quad
\mathbb{P}[C\cap \theta L_{p}\neq \{0\}]\leq \exp\left(-t\right),
\end{equation}
\begin{equation}\label{eq_approx_kin_down}
\frac{\delta(C)}{n}\geq 1-\frac{p-1}{n}+2\sqrt{\frac{t}{n}}\quad
\Rightarrow\quad
\mathbb{P}[C\cap \theta L_{p}\neq \{0\}]\geq 1-\exp(-t).
\end{equation}
This is the `\textit{approximate kinematic formula}' \cite*[Theorem 1]{amelunxen2014living}, which was used in \cite{candes2020phase} (the constants are improved since we use Theorem \ref{thm_mccoy}).
Beware that there is a technicality lurking here:
The conditions in \eqref{eq_approx_kin_up} and \eqref{eq_approx_kin_down} are deterministic.
However, as in our case the polyhedral cone $P:=\spa\{v\}\oplus[0,\infty)^n$ is random, $\delta(P)$ is a random variable.
\cite{candes2020phase} circumvent this by showing that $\delta(P)/n$ converges in probability to some deterministic expression $h^*$ (see \eqref{eq_candes_h}), allowing for an asymptotic argument.
In Theorem \ref{thm_bound_dimension}, we deal with the issue as follows: we show that the random cone $P$ is contained in a larger random cone, which only takes one of $n$ possible shapes.
For the shapes likely to occur, we show that the condition in \eqref{eq_approx_kin_up} is satisfied.
We show that the others occur with small probability.

In conclusion, we can calculate the probability of linear separability with the intrinsic volumes of polyhedral cones such as $\spa\{v\}\oplus [0,\infty)^n$.
Furthermore, we can give good approximations of this probability, by calculating the statistical dimension $\delta(P)$.
The problem remains to calculate the intrinsic volumes of $P$.

\subsection{Contributions}\label{sec_int_cont}

We provide new results about the probability of linear separability, the intrinsic volumes of the polyhedral cone $\spa\{v\}\oplus[0,\infty)^n$, and Youden's demon problem.
For the probability of linear separability, we provide an identity for the case that the features $x$ are Gaussian, and the labels $y$ depend on $x$ only through some marginal $x^T\beta^*$, as is the case in generalized linear models (see \eqref{eq_dependence_xTbeta}).
We also provide two upper bounds, which complement and improve existing results.
To achieve these results, we calculate the intrinsic volumes of polyhedral cones of the form $\spa\{v\}\oplus [0,\infty)^n$.
In this calculation, we run into a version of Youden's demon problem, for which we provide a new identity.

\subsubsection*{A formula for the probability of linear separability.}
With Theorem \ref{thm_prob_separation}, we provide a formula for the probability of linear separability, given Gaussian features $x$ and labels $y$ depending on $x$ only through some marginal $x^T\beta^*$.
This is the case, for example, in generalized linear models (see \eqref{eq_dependence_xTbeta}).
It was already known that the probability of linear separability without intercept can be expressed as a sum of the intrinsic volumes of the polyhedral cone $\spa\{v\}\oplus[0,\infty)^n$, see Section \ref{subsub_kinematic}.
Our contribution lies in providing an expression for these intrinsic volumes, in Theorem \ref{thm_intrinsic}.
The formula simplifies for the case of sign-flip noise (see \eqref{eq_dependence_xTbeta}, Remark \ref{rem_sign_flip}) or the signal-less imbalanced model (see \eqref{eq_intercept}, Proposition \ref{prop_prob_intercept}), and is particularly simple for the sign-flip noise model in dimension $p=2$ (see Corollary \ref{cor_dim2}).

At this point, we remark that we calculated the probability of linear separability without intercept (Theorem \ref{thm_prob_separation}), and with intercept but without signal (Proposition \ref{prop_prob_intercept}).
We do not provide a formula for the probability of linear separability with both intercept and dependence of $y$ and $x$.
In this case, the intrinsic volumes of a polyhedral cone of the form $\spa\{y\}\oplus\spa\{v\}\oplus[0,\infty)^n$ must be calculated.
We leave this to further research.

\subsubsection*{The intrinsic volumes of the polyhedral cone $\spa\{v\}\oplus[0,\infty)^n$.}
Our formulas for the probability of linear separability in Section \ref{sec_fb_formula} rely on the intrinsic volumes of polyhedral cones of the form $P:=\spa\{v\}\oplus[0,\infty)^n$.
We calculate these in Theorem \ref{thm_intrinsic}.
To do so, we establish an inequality description of the polyhedral cone in Theorem \ref{thm_description} and describe the projection onto it in Algorithm \ref{alg_2}.

\subsubsection*{A formula for Youden's demon problem.}
Youden's demon problem for Gaussian $g\sim\mathcal{N}(0,I_m)$ is the problem of calculating the probability that the sample mean is larger than $k$ observations and smaller than the remaining $m-k$.
In the calculation of the intrinsic volumes of polyhedral cones of the form $\spa\{v\}\oplus[0,\infty)^n$, we run into a version of this problem.
Although recursive relations for this problem were already available (see Appendix \ref{sec_youden_hist}), we provide a closed-form expression, in terms of the complex Gaussian cumulative distribution function (Theorem \ref{cor_youden_general}).
To prove this result, we leverage and extend recent results from \cite{kabluchko2020absorption}.

\subsubsection*{Upper bounds for the probability of linear separability.}
In this work, we provide two new upper bounds for the probability of linear separability, Theorem \ref{thm_bound_dimension} and Proposition \ref{prop_bound_sign_flip}.
These results are compared in Table \ref{tab_comparison} below.
For a more complete review see Section \ref{sec_int_existing}. 

\begin{table}[htb]
\centering
\resizebox{\textwidth}{!}{
\begin{tabular}{|l|c|c|c|c|c|} 
\hline 
 & \eqref{eq_satoshi_upper} & \eqref{eq_chardon_1}
 & \eqref{eq_chardon_2}
 & Theorem \ref{thm_bound_dimension} & Proposition \ref{prop_bound_sign_flip} \\
\hline 
Any GLM & Yes & No & Yes & Yes & No \\
Only Gaussian & No & Yes & No & Yes & Yes \\
HD asymptotics & $\sigma\gtrsim \frac{p}{n}$ 
& $n\gtrsim Bp$ 
& $n\gtrsim B\log^4(B)p$ 
& $\sigma\gtrsim \frac{p}{n}$ 
& $\sigma\gtrsim \frac{p}{n}\log\frac{n}{p}$ \\
Classical asymp. 
& No 
& Yes
& Yes
& Yes
& Yes \\
Explicit constants & Yes & No & No & Yes & Yes \\
Sharp at $\sigma=1/2$ & No & No & No & No & Yes \\
\hline 
\end{tabular}
}
\caption{Finite sample upper bounds for the probability of linear separability.
\eqref{eq_satoshi_upper} refers to \cite*[Theorem 14]{hayakawa2021estimating}, \eqref{eq_chardon_1} refers to \cite[Theorem 1]{chardon2024finite},
\eqref{eq_chardon_2} refers to \cite[Theorem 4]{chardon2024finite} assuming $B\in o(p)$ to reduce notation,
Theorem \ref{thm_bound_dimension} and Proposition \ref{prop_bound_sign_flip} are new contributions.
\\
Here, $\sigma:=\mathbb{P}[yx^T\beta^*<0]$ denotes the probability of observing a `wrong label', see \eqref{eq_oracle} and \eqref{eq_defn_delta}. We use the notation $B:=\max\{\|\beta^*\|_2,e\}$.
`\textit{Any GLM}': The result applies to all distributions of the labels $y$ depending only on a marginal of $x$, see \eqref{eq_dependence_xTbeta}.
`\textit{Only Gaussian}': The results rely on the assumption that $x$ are Gaussian.
`\textit{HD asymptotics}': The upper bounds vanish for large $p,n\rightarrow\infty$, and $p/n\rightarrow \kappa$ tending to a constant $\kappa\in(0,0.5)$, under the mentioned condition. 
Here, $\gtrsim$ means larger up to a universal constant.
`\textit{Classical asymptotics}': The bound shows that as $n$ tends to infinity, keeping all else fixed, the probability of linear separability vanishes. 
`\textit{Explicit constants}': The constants appearing in the results are reasonably small/large.
`\textit{Sharp at $\sigma=1/2$}': The upper bound is an equality at $\sigma=1/2$, i.e. it recovers \eqref{eq_cover} (Cover's identity).
}
\label{tab_comparison}
\end{table}
Our new contribution Theorem \ref{thm_bound_dimension} stands out as follows:
As of now, it is the most widely applicable result which achieves a vanishing upper bound given $\sigma\gtrsim p/n$ in the high-dimensional asymptotic regime, which also shows that the probability of linear separability vanishes as $n\rightarrow\infty$.
The new Proposition \ref{prop_bound_sign_flip} is the only available upper bound that recovers the equality at $\sigma=1/2$, making it the sharpest approximation for $\sigma\approx 1/2$.
Both results have explicit and reasonably sized constants.

\section{A formula and two bounds}

In this section, we provide the main results of this work.
First, we provide a formula for the probability of linear separability in Section \ref{sec_fb_formula}.
Then, we provide two bounds for the probability of linear separability in Section \ref{sec_fb_bounds}.

\subsection{A formula for the probability of linear separability}\label{sec_fb_formula}
Here, we provide a formula for the probability of linear separability.
It is based on the kinematic formula, see Section \ref{subsub_kinematic}.
Consequentially, in this section, we assume that the features $x_j$ follow a Gaussian distribution and that the labels $y$ only depend on $x$ through some marginal $x^T\beta^*$ (see Section \ref{sec_models} for a discussion, in particular \eqref{eq_dependence_xTbeta}).
We first introduce the general formula in Section \ref{sec_fb_f_general}.
Then, we provide two examples.
The first one is the two-dimensional case in Section \ref{sec_fb_f_2d}, the second one is the signal-less case with class (im-)balance (see \eqref{eq_intercept}) in Section \eqref{sec_fb_f_imbalanced}.
In the latter, we also show that the general formula recovers Cover's identity \eqref{eq_cover} if the labels $y$ and features $x$ are independent and there is class balance.

\subsubsection{General case}\label{sec_fb_f_general}
Without further ado, we state the formula for the probability of linear separability.
The notation is explained below.

\begin{thm}\label{thm_prob_separation}
Suppose that $x_j\sim\mathcal{N}(0,I_p)$ and the distribution of $y_j$ only depends on $x_j$ through $x_j^T\beta^*$ for some $\beta^*\in \mathbb{R}^p\setminus\{0\}$.
Let $\delta:=\mathbb{P}[yx^T\beta^*>0]$, define $v_j:=y_jx_j^T\beta^*/\|\beta^*\|_2$ and $P:=\spa\{v\}\oplus[0,\infty)^n$.
Let $A_N$ be the event that the first $N$ elements of $v$ are positive, and the rest negative.
The probability of linear separability without intercept is:
\[
S(p,n)=
\delta^n+(1-\delta)^n+2\sum_{N=1}^{n-1}\sum_{j\geq 1\text{ odd}}{n\choose N}\delta^N(1-\delta)^{n-N}\nu_{n-p+1+j}(P|A_N),
\]
where $\nu_{k}(P|A_N)$ denotes the expected $k$-th intrinsic volume of $P$ given $A_N$.
It holds that $\nu_0(P|A_N)=0$. For $k\in\{1,\ldots,n-1\}$,
\[
\nu_{n-k}(P|A_N)
=\sum_{\substack{(L,R)\subset\mathcal{I}_N\\|L|+|R|=k+1}}
\mathbb{E}\left[\prod_{j\in L\cup R}\Phi\left(\frac{igv_j}{\|v\|_{LR}}\right)\prod_{j\in[n]\setminus(L\cup R)}\Phi\left(\frac{g'v_j}{\|v\|_{LR}}\right)\Bigg|A_N\right],
\]
\[
\nu_n(P|A_N)=\mathbb{P}\left[\min_{1\leq l\leq N}\frac{g_j}{v_j}\geq \max_{N+1\leq r\leq n}\frac{g_r}{v_r}\Bigg|A_N\right]=1-\sum_{k=1}^{n-1}\nu_k(P|A_N).
\]
\end{thm}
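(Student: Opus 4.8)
The plan is to combine Proposition \ref{prop_candes} with the kinematic formula (Theorem \ref{thm_kinematic}), and then to reduce the intrinsic volumes $\nu_k(P\mid A_N)$ of $P=\spa\{v\}\oplus[0,\infty)^n$ to a face-by-face computation. First I would condition on the number $N$ of indices with $v_j>0$. Since the $(x_j,y_j)$ are i.i.d.\ with $\mathbb{P}[v_j>0]=\delta$, a prescribed sign pattern with $N$ positive coordinates has probability $\delta^N(1-\delta)^{n-N}$, and permuting coordinates is an orthogonal map carrying $P$ to the analogous cone for any other arrangement; hence $\nu_k(P)$ depends on the pattern only through $N$, so $\nu_k(P\mid A_N)$ is well defined. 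For $N\in\{0,n\}$ the vector $v$ lies in the interior of an orthant (or its negative), whence $\spa\{v\}\oplus[0,\infty)^n=\mathbb{R}^n$, so $\theta L_{p-1}$ always meets $P$ nontrivially and the data are a.s.\ separable; these cases contribute $\delta^n+(1-\delta)^n$. For $1\le N\le n-1$ the vector $v$ has mixed signs, so $P$ is not a subspace (its polar $P^\circ=v^\perp\cap(-\infty,0]^n$ is nonzero), and I may apply Theorem \ref{thm_kinematic} with $n-k=p-1$, i.e.\ $k=n-p+1$, giving the inner sum $2\sum_{j\ge 1\text{ odd}}\nu_{n-p+1+j}(P\mid A_N)$. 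Weighting by $\binom{n}{N}\delta^N(1-\delta)^{n-N}$ yields the stated formula for $S(p,n)$.

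Next I would read off the faces of $P$ from the inequality description (Theorem \ref{thm_description}): $x\in P$ iff $x_l/v_l\ge x_r/v_r$ for every positive index $l$ and negative index $r$. A boundary point has a common value $t^*=\min_l x_l/v_l=\max_r x_r/v_r$; writing $L$ (resp.\ $R$) for the positive (resp.\ negative) indices attaining it, the relative interior of the face through that point is indexed by the pair $(L,R)$ with both parts nonempty. Imposing the $|L|+|R|$ equalities $x_j=t^* v_j$ for $j\in L\cup R$ and leaving the other coordinates free shows $\dim F_{(L,R)}=n-(|L|+|R|)+1$, i.e.\ codimension $|L|+|R|-1$, so $\mathcal{F}_{n-k}(P)$ corresponds to pairs with $|L|+|R|=k+1$, which is the index set $\mathcal{I}_N$. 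The minimal face is the lineality line $\spa\{v\}$, so there is no $0$-face and $\nu_0(P\mid A_N)=0$; the top face is the interior, giving $\nu_n(P\mid A_N)=\mathbb{P}[g\in P\mid A_N]$, exactly the displayed $\min\!\ge\!\max$ event, and $\sum_k\nu_k=1$ yields the final identity.

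For a fixed $(L,R)$ I would compute $\nu_{F_{(L,R)}}(P)=\mathbb{P}[\Pi_P(g)\in\relint F_{(L,R)}]$ (Definition \ref{defn_intrinsic}) through the orthogonal splitting $\mathbb{R}^n=\spa F\oplus(\spa F)^\perp$. Because $\Pi_P(g)\in\relint F$ iff $g-\Pi_P(g)$ lies in the normal cone $N(F)$, and since $N(F)\subseteq(\spa F)^\perp$ while $\relint F\subseteq\spa F$, the event factors as $\{\Pi_{\spa F}(g)\in\relint F\}\cap\{\Pi_{(\spa F)^\perp}(g)\in N(F)\}$ on the two independent Gaussian components, the projection being the one described in Algorithm \ref{alg_2}. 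A direct computation gives $\spa F=\{x:x|_{L\cup R}\parallel v|_{L\cup R}\}$ and $N(F)=\{w:\supp w\subseteq L\cup R,\ w_j\le 0,\ \sum_j w_j v_j=0\}$, with both projections governed by the weighted mean $t^*=\sum_{j\in L\cup R}g_j v_j/\|v\|_{LR}^2$ where $\|v\|_{LR}^2:=\sum_{j\in L\cup R}v_j^2$. The $\spa F$-component lands in $\relint F$ iff $g_j>t^* v_j$ for all $j\notin L\cup R$; writing $t^*=g'/\|v\|_{LR}$ with $g'\sim\mathcal{N}(0,1)$ and using independence and symmetry produces the real product $\prod_{j\notin L\cup R}\Phi(g' v_j/\|v\|_{LR})$.

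The remaining factor $\mathbb{P}[\Pi_{(\spa F)^\perp}(g)\in N(F)]$ is the crux: it equals the probability that $g_j\le t^* v_j$ for every $j\in L\cup R$ simultaneously, i.e.\ that the weighted mean $t^*$ of the observations $g_j/v_j$ exceeds the $|L|$ values indexed by $L$ and falls below the $|R|$ values indexed by $R$. This is precisely a weighted, heteroscedastic instance of Youden's demon problem, in which the threshold depends on all observations at once, so the coordinates are genuinely coupled. I would invoke Theorem \ref{cor_youden_general} to write this probability as the complex Gaussian product $\prod_{j\in L\cup R}\Phi(i g v_j/\|v\|_{LR})$ with $g\sim\mathcal{N}(0,1)$ independent of $g'$. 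Multiplying the two independent factors, summing over all $(L,R)$ with $|L|+|R|=k+1$, and taking the conditional expectation over the magnitudes of $v$ given $A_N$ yields the claimed expression for $\nu_{n-k}(P\mid A_N)$. I expect the Youden's demon step to be the main obstacle: it rests on the closed-form evaluation extending \cite{kabluchko2020absorption}, and one must track the weights $v_j^2$ and the mixed signs carefully so that the sample-mean event translates correctly into the imaginary-argument cumulative distribution functions.
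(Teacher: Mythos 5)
Your proposal follows essentially the same route as the paper: Proposition \ref{prop_candes} plus the kinematic formula after conditioning on $A_N$ (with the $N\in\{0,n\}$ cases giving $\delta^n+(1-\delta)^n$), and then the face-by-face computation of $\nu_{n-k}(P)$ via the inequality description, the $(L,R)$-indexed projection, the orthogonal/independence splitting of the two groups of constraints, and the weighted Youden's demon identity (Corollary \ref{cor_youden_general}) — which is exactly the content and proof strategy of the paper's Theorem \ref{thm_intrinsic}, here unpacked inline rather than cited. The argument is correct and complete at the level of detail given.
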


We recall some of the notation used in Theorem \ref{thm_prob_separation}.
We use $i:=\sqrt{-1}$ to denote the imaginary number.
The formula uses the complex Gaussian cumulative distribution function, which is defined as:
\[
\Phi:\mathbb{C}\rightarrow\mathbb{C},\quad
z\mapsto \frac{1}{2}+\frac{1}{\sqrt{2\pi}}\int_0^ze^{-\frac{t^2}{2}}dt.
\]
The formula could be expressed without using complex numbers (see Remark \ref{rem_real}), although this would make it less compact.
For $N\in\{1,\ldots,n-1\}$, we define the index set $\mathcal{I}_N:=\{1,\ldots,N\}\times\{N+1,\ldots,n\}$.
For $v\in\mathbb{R}^n$, and a pair of sets $(L,R)\subset\mathcal{I}_N$ we define:
\[
\|v\|_{LR}^2:=\sum_{j\in L\cup R}v_j^2.
\]
We recall that the intrinsic volumes $\nu_k(P|A_N)$ can be expressed as a collection of inequalities between random variables, which may be interesting besides the expression in Theorem \ref{thm_prob_separation}, see Remark \ref{rem_combinatorics}.

\begin{rem}\label{rem_sign_flip}
For sign-flip noise, the random variables $v_j$ conditional on $A_N$ have the same distribution as the unconditional $|v_j|$ if $j\leq N$, and as $-|v_j|$ if $j>N$, i.e. as the absolute value of a Gaussian.
This may simplify the expression considerably.
For example, it follows that:
\begin{equation}\label{eq_intrinsic_n}
\nu_n(P|A_N)=\mathbb{P}\left[\min_{1\leq l\leq N}\frac{g_j}{|v_j|}\geq \max_{N+1\leq r\leq n}\frac{g_r}{|v_r|}\right]=\frac{1}{{n\choose N}}.
\end{equation}
We exploit this in the two-dimensional example below.
\end{rem}

\begin{proof}[Proof of Theorem \ref{thm_prob_separation}]
By Proposition \ref{prop_candes},
\[
S(n,p)=\mathbb{P}\left[\theta L_{p-1}\cap \spa\{v\}\oplus[0,\infty)^n\neq \{0\}\right].
\]
We condition on $A_N$ and $v$.
Note that if $N\in\{0,n\}$, then $P=\mathbb{R}^n$.
Then,
\[
\mathbb{P}\left[P\cap L_{p-1}\neq \{0\}\right]
=\sum_{N=0}^n{n\choose N}\delta^N(1-\delta)^{n-N}\mathbb{E}\left[\mathbb{P}\left[P\cap L_{p-1}\neq \{0\}\Big|v\right]\Big|A_N\right].
\]
Note that if $N\in\{0,n\}$, then $P=\mathbb{R}^n$.
Else, by Theorem \ref{thm_kinematic}:
\[
\mathbb{E}\left[\mathbb{P}\left[P\cap L_{p-1}\neq \{0\}\Big|v\right]\Big|A_N\right]
=2\sum_{j\geq 1\text{ odd}}\mathbb{E}\left[\nu_{n-p+1+k}(P|v)\Big|A_N\right]
\]
Here, we denote by $\nu_k(P|v)$ the $k$-th intrinsic volume of $P$ conditional on $v$, given in Theorem \ref{thm_intrinsic}.
For $\nu_n(P|A_N)$, we note that for $g\sim\mathcal{N}(0,I_n)$,
\[
\nu_n(P|A_N)
=\mathbb{E}\left[\mathbb{P}\left[g\in P\Big|v\right]\Big|A_N\right]
=\mathbb{P}\left[\min_{1\leq l\leq N}\frac{g_j}{v_j}\geq \max_{N+1\leq r\leq n}\frac{g_r}{v_r}\Bigg|A_N\right].
\]
The proof is complete.
\end{proof}

\subsubsection{Example for dimension two with sign-flip noise}\label{sec_fb_f_2d}
In the two-dimensional case $p=2$, with sign-flip noise, the formula in Theorem \ref{thm_prob_separation} gives a simple expression.

\begin{cor}\label{cor_dim2}
Suppose that $x_j\sim\mathcal{N}(0,I_2)$ and $y$ follows the sign-flip noise model \eqref{eq_sign_flip}. Then, the probability of linear separability without intercept is:
\[
S(n,2)=\delta^n+(1-\delta)^n+2\sum_{N=1}^{n-1}\delta^N(1-\delta)^{n-N}
\]
\end{cor}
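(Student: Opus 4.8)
The plan is to specialize Theorem~\ref{thm_prob_separation} to $p=2$ and then feed in the sign-flip simplification recorded in Remark~\ref{rem_sign_flip}. Setting $p=2$ in the master formula, the intrinsic-volume index becomes $n-p+1+j=n-1+j$, so the inner sum runs over $\nu_{n-1+j}(P|A_N)$ for odd $j\geq 1$. The first thing I would do is invoke a support argument: the intrinsic volumes of a cone in $\mathbb{R}^n$ are a probability distribution on $\{0,\ldots,n\}$ and therefore vanish for indices exceeding $n$. Hence for every odd $j\geq 3$ we have $n-1+j\geq n+2>n$, so $\nu_{n-1+j}(P|A_N)=0$, and the only surviving term is $j=1$, which contributes $\nu_n(P|A_N)$. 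This collapses the double sum to
\[
S(n,2)=\delta^n+(1-\delta)^n+2\sum_{N=1}^{n-1}{n\choose N}\delta^N(1-\delta)^{n-N}\,\nu_n(P|A_N).
\]

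Next I would substitute the value of $\nu_n(P|A_N)$ for sign-flip noise, namely \eqref{eq_intrinsic_n} from Remark~\ref{rem_sign_flip}, which asserts $\nu_n(P|A_N)=1/{n\choose N}$. The binomial coefficient then cancels the weight ${n\choose N}$ appearing in each summand, leaving
\[
S(n,2)=\delta^n+(1-\delta)^n+2\sum_{N=1}^{n-1}\delta^N(1-\delta)^{n-N},
\]
which is exactly the claimed identity. So, at the level of the corollary itself, the proof is just the reduction above: kill all but the $j=1$ term by the dimension/support bound, then cancel the binomial using the closed form for the top intrinsic volume.

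The main point is that there is essentially no obstacle remaining inside the corollary; the substantive content has been front-loaded into the input \eqref{eq_intrinsic_n}. That identity is where the real work lives, and it rests on the exchangeability afforded by sign-flip noise: conditional on $A_N$, the $|v_j|$ are i.i.d.\ absolute Gaussians, so the ratios $g_j/|v_j|$ are exchangeable, and the event $\min_{1\leq l\leq N} g_l/|v_l|\geq \max_{N+1\leq r\leq n} g_r/|v_r|$ is one of ${n\choose N}$ equiprobable orderings determining which $N$ of the coordinates sit on top, hence has probability $1/{n\choose N}$. The only verification I would flag explicitly is the support step — confirming that $j=1$ is the unique admissible odd index in range when $p=2$ — which is immediate once one recalls that $\nu_k(P|A_N)$ is supported on $\{0,\ldots,n\}$.
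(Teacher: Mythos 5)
Your proof is correct and matches the paper's own (very brief) justification: the paper states the corollary ``follows from Theorem \ref{thm_prob_separation}, using \eqref{eq_intrinsic_n}'', which is exactly your reduction — only the $j=1$ term survives since $n-1+j>n$ for odd $j\geq 3$, and the closed form $\nu_n(P|A_N)=1/{n\choose N}$ cancels the binomial weight. No gaps.
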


The result follows from Theorem \ref{thm_prob_separation}, using \eqref{eq_intrinsic_n}. 
For the case where $\delta=1/2$, this recovers Cover's identity \eqref{eq_cover}, as $S(n,2)=n/2^{n-1}$.

In low dimensions, the probability of linear separability is considerably easier to calculate than in high dimensions (in particular see \cite{bruckstein1985monotonicity} for a simple argument in dimension 1).
Similarly, Corollary \ref{cor_dim2} can be proved directly, without using the kinematic formula.
Moreover, one can directly prove the same identity for $S_0(n,1)$.

\subsubsection{The signal-less (im)balanced case}\label{sec_fb_f_imbalanced}

The sign-flip noise model leads to simplifications of the formula in Theorem \ref{thm_prob_separation} since the $|v_j|$ are independent of $A_N$ (see Remark \ref{rem_sign_flip}).
The signal-less model with class imbalance \eqref{eq_intercept} simplifies even more, as $|v_j|=|y_j|\equiv 1$, see Proposition \ref{prop_candes}.
The notation $\Phi_k$ is explained below.

\begin{prop}\label{prop_prob_intercept}
Suppose that $x_j\sim\mathcal{N}(0,I)$, that $y_j$ is independent of $x_j$.
Let $b:=\mathbb{P}[y_j=1]$.
The probability of linear separability with intercept is:
\[
S_0(p,n)=
b^n+(1-b)^n+2\sum_{j\geq 1\text{ odd}}\mu_{n-p+j},
\]
where $\mu_n=\sum_{N=1}^{n-1}b^N(1-b)^{n-N}$, $\nu_0=0$ and for $k\in\{1,\ldots,n-1\}$, and for $(g,g')\sim\mathcal{N}(0,I_2)$:
\[
\mu_{n-k}:=
\]
\[{n\choose k+1}\mathbb{E}\left[\left(b\Phi_k(ig)+(1-b)\Phi_k(-ig)\right)^{k+1}\left(b\Phi_k(g')+(1-b)\Phi_k(-g')\right)^{n-(k+1)}\right].
\]
\end{prop}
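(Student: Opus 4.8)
The plan is to specialize Theorem \ref{thm_prob_separation} to the signal-less model, where the decisive feature is that $v_j=y_j\in\{-1,+1\}$, so that $|v_j|\equiv 1$. First I would apply the with-intercept form of Proposition \ref{prop_candes} to write $S_0(n,p)=\mathbb{P}[\theta L_p\cap P\neq\{0\}]$ with $P=\spa\{y\}\oplus[0,\infty)^n$, and condition on the sign pattern of $y$. By exchangeability of the coordinates the conditional law of $P$ depends only on the number $N$ of positive labels, which is $\mathrm{Binomial}(n,b)$. The two extremes $N\in\{0,n\}$ make $P=\mathbb{R}^n$, so $\theta L_p\cap P=\theta L_p\neq\{0\}$ almost surely and they contribute exactly $b^n+(1-b)^n$. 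For $1\le N\le n-1$ I would invoke the kinematic formula (Theorem \ref{thm_kinematic}), which expresses $\mathbb{P}[\theta L_p\cap P\neq\{0\}\,|\,A_N]$ as $2\sum_{j\ge 1\text{ odd}}\nu_{n-p+j}(P|A_N)$, reducing the task to evaluating, for each relevant $k$, the quantity $\sum_{N=1}^{n-1}\binom{n}{N}b^N(1-b)^{n-N}\nu_{n-k}(P|A_N)$ and identifying it with the claimed $\mu_{n-k}$.

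The heart of the argument is a coordinate-wise factorization. Because $|v_j|\equiv 1$, every face index satisfies $\|v\|_{LR}=\sqrt{|L|+|R|}=\sqrt{k+1}$, so all the complex factors in Theorem \ref{thm_intrinsic} collapse to the single function $\Phi_k(\cdot)=\Phi(\cdot/\sqrt{k+1})$. Rather than summing over $N$ first, I would push the expectation over $y$ directly into the face expansion: a face is an active set of size $k+1$ split into its positive part $L$ and negative part $R$, and averaging over the independent signs turns each active coordinate into a factor $b\Phi_k(ig)+(1-b)\Phi_k(-ig)$ (membership in $L$ forces $y_j=+1$ with weight $b$, membership in $R$ forces $y_j=-1$ with weight $1-b$, while the scalar $g$ is shared across the whole active set) and each of the remaining $n-k-1$ coordinates into $b\Phi_k(g')+(1-b)\Phi_k(-g')$. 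Since $g$ and $g'$ are shared scalars, the sum over sign splits of a fixed active set is precisely a binomial expansion producing $(b\Phi_k(ig)+(1-b)\Phi_k(-ig))^{k+1}$, and the $\binom{n}{k+1}$ choices of active set supply the prefactor; equivalently one uses the identity $\binom{n}{N}\binom{N}{a}\binom{n-N}{b'}=\binom{n}{k+1}\binom{k+1}{a}\binom{n-k-1}{N-a}$ to reorganize the double sum over $(N,L,R)$.

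The step I expect to be the main obstacle is the boundary of this expansion. A genuine face of $\spa\{v\}\oplus[0,\infty)^n$ must straddle the sign change, i.e.\ both $L$ and $R$ nonempty, whereas the binomial power $(b\Phi_k(ig)+(1-b)\Phi_k(-ig))^{k+1}$ also contains the two pure terms $b^{k+1}\Phi_k(ig)^{k+1}$ and $(1-b)^{k+1}\Phi_k(-ig)^{k+1}$. I would show these vanish in expectation, $\mathbb{E}_g[\Phi_k(ig)^{k+1}]=\mathbb{E}_g[\Phi_k(-ig)^{k+1}]=0$. Writing $\Phi_k(ig)=\tfrac12+i\psi(g)$ with $\psi$ odd reduces this to a moment identity for $\psi$ (for $k=1$ it is exactly $\mathbb{E}[\psi^2]=\tfrac14$, which is also forced by $\nu_1$ of a halfspace being $\tfrac12$); more conceptually it is the analytic shadow of Youden's demon, since a direction lying beyond all $k+1$ of its defining coordinates would require a mean to exceed each of its summands, an event of probability zero, and I expect it to fall out of the complex-Gaussian computation behind Theorem \ref{cor_youden_general}. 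Once this vanishing is established the pure terms drop out (they would otherwise be the spurious $N\in\{0,n\}$ contributions), and the expansion closes to exactly the product form for $\mu_{n-k}$.

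Finally I would treat the top intrinsic volume separately. Since $|v_j|\equiv 1$, Remark \ref{rem_sign_flip} gives $\nu_n(P|A_N)=1/\binom{n}{N}$, so the binomial weight cancels and $\mu_n=\sum_{N=1}^{n-1}b^N(1-b)^{n-N}$, matching the stated definition. Substituting the values of $\mu_{n-k}$ and $\mu_n$ into the kinematic sum $2\sum_{j\ge 1\text{ odd}}\mu_{n-p+j}$ and adding back the extreme-$N$ contribution $b^n+(1-b)^n$ then yields the claimed expression for $S_0(n,p)$.
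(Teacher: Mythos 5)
Your proposal follows essentially the same route as the paper's proof in Appendix \ref{app_prop_prob_intercept}: condition on the number $N$ of positive labels, apply the kinematic formula, specialize Theorem \ref{thm_intrinsic} with $|v_j|\equiv 1$ so that $\|v\|_{LR}=\sqrt{k+1}$, and reorganize the sum over $(N,L,R)$ via the identity $\binom{n}{N}\binom{N}{l}\binom{n-N}{k+1-l}=\binom{n}{k+1}\binom{k+1}{l}\binom{n-k-1}{N-l}$ into the binomial power. You even make explicit a step the paper leaves implicit, namely that the pure terms $\mathbb{E}[\Phi_k(\pm ig)^{k+1}]$ vanish (which is needed to close the sum $\sum_{l=1}^{k}$ to the full power $(\alpha+\tilde\alpha)^{k+1}$, and follows from Corollary \ref{cor_youden} since the maximum of $k+1$ i.i.d.\ Gaussians exceeds their mean almost surely).
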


We used the notation:
\[
\Phi_k(g):=\frac{1}{2}+\frac{1}{\sqrt{2\pi}}\int_{0}^{g/\sqrt{k+1}}e^{-\frac{t^2}{2}}dt.
\]
The identity follows as Theorem \ref{thm_prob_separation} and binomial theorems. 
The proof is provided in Appendix \ref{app_prop_prob_intercept}.
In the case that $b=1/2$, using Pascal's triangle one can verify that Proposition \ref{prop_prob_intercept} recovers Cover's identity with intercept.

\subsection{Two bounds for the probability of linear separability}\label{sec_fb_bounds}

Here, we provide two bounds for the probability of linear separability.
The first one in Section \ref{sec_fb_b_dim} uses the kinematic formula (see Section \ref{subsub_kinematic}), and our inequality description of the cone $\spa\{v\}\oplus[0,\infty)^n$ (Theorem \ref{thm_description}).
The second one in Section \ref{sec_fb_b_sf} uses that for sign-flip noise, the cone $\spa\{v\}\oplus[0,\infty)^n$ only depends on the signal strength through $N$, the number of positive indices of $v$.
We compare Theorem \ref{thm_bound_dimension} and Proposition \ref{prop_bound_sign_flip} to existing bounds in Section \ref{sec_int_cont}.

\subsubsection{Upper bound using the statistical dimension}\label{sec_fb_b_dim}
Below, we state an upper bound for the probability of linear separability.
Its proof relies on the approximation of the kinematic formula \eqref{eq_approx_kin_up}, and an approximation of the polyhedral cone $\spa\{v\}\oplus [0,\infty)^n$, see Lemma \ref{lem_bound_P}.

\begin{thm}\label{thm_bound_dimension}
Suppose that $x_j\sim\mathcal{N}(0,I_p)$, and that for some $\beta^*\in S^{p-1}$, the labels $y_j$ only depend on $x_j$ through $x_j^T\beta^*$.
Suppose that for some $t\geq0$, 
\begin{equation}\label{eq_condition_delta}
\frac{2(p-1)}{n}+\left(4+\frac{1}{\sqrt{2}}\right)\sqrt{\frac{t}{n}}\leq \mathbb{P}[yx^T\beta^*<0]\leq\frac{1}{2}.
\end{equation}
Then, the probability of linear separability without intercept is at most:
\[
S(n,p)\leq 3\exp\left(-t\right).
\]
\end{thm}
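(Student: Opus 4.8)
The plan is to turn $S(n,p)$ into an intersection probability via Proposition~\ref{prop_candes}, namely $S(n,p)=\mathbb{P}[\spa\{v\}\oplus[0,\infty)^n\cap\theta L_{p-1}\neq\{0\}]$, and then apply the approximate kinematic formula \eqref{eq_approx_kin_up}. The obstacle is that the cone $P:=\spa\{v\}\oplus[0,\infty)^n$ is random, so its statistical dimension $\delta(P)$ is a random variable, whereas the hypothesis of \eqref{eq_approx_kin_up} is deterministic. I would resolve this by conditioning on $N$, the number of positive entries of $v$. Writing $\sigma:=\mathbb{P}[yx^T\beta^*<0]$ and $\delta:=1-\sigma$, each $v_j$ is positive independently with probability $\delta$, so $N\sim\mathrm{Bin}(n,\delta)$ and, setting $M:=n-N$, we have $M\sim\mathrm{Bin}(n,\sigma)$ with $\mathbb{E}[M]=n\sigma$ and $\mathbb{E}[N]=n(1-\sigma)$.

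First I would invoke Lemma~\ref{lem_bound_P} to enlarge $P$ to a cone whose shape, and hence whose statistical dimension, depends on $v$ only through $N$: conditional on $v$ having exactly $N$ positive and $M$ negative entries, one has $\delta(P)\leq\tfrac{1}{2}\bigl(n+\max\{N,M\}\bigr)$ almost surely (this is the worst case of $\delta(P)$ over all magnitude configurations with a fixed sign pattern, attained in the limit where one block of magnitudes dominates). Substituting this bound into \eqref{eq_approx_kin_up}, applied with subspace dimension $p-1$, the required inequality $\delta(P)/n\leq 1-(p-1)/n-2\sqrt{t/n}$ is implied by $\min\{N,M\}\geq M^{*}$, where $M^{*}:=2(p-1)+4\sqrt{tn}$. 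Call such an $N$ \emph{good}; for good $N$ the approximate kinematic formula gives $\mathbb{P}[P\cap\theta L_{p-1}\neq\{0\}\mid N]\leq e^{-t}$.

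The factor $3$ then comes from a three-way split. Summing the conditional bound over good $N$, weighted by $\mathbb{P}[N]$, contributes at most $e^{-t}$ since the weights sum to at most one. The complementary event $\{\min\{N,M\}<M^{*}\}=\{M<M^{*}\}\cup\{N<M^{*}\}$ I would bound crudely by separability probability $1$ and control each piece by a binomial lower tail. The hypothesis \eqref{eq_condition_delta} is calibrated exactly for this: its lower bound, multiplied by $n$, reads $\mathbb{E}[M]=n\sigma\geq M^{*}+\sqrt{tn/2}$, so Hoeffding's inequality yields $\mathbb{P}[M<M^{*}]\leq\mathbb{P}[M\leq\mathbb{E}[M]-\sqrt{tn/2}]\leq\exp(-2(tn/2)/n)=e^{-t}$; and its upper bound $\sigma\leq\tfrac12$ forces $M^{*}\leq n/2-\sqrt{tn/2}$, whence $\mathbb{E}[N]-M^{*}=n(1-\sigma)-M^{*}\geq n/2-M^{*}\geq\sqrt{tn/2}$, giving $\mathbb{P}[N<M^{*}]\leq e^{-t}$ by the same argument. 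Adding the three terms gives $S(n,p)\leq 3e^{-t}$. The degenerate cases $N\in\{0,n\}$, where $P=\mathbb{R}^n$ and separability is certain, are automatically absorbed, since $N=n$ lies in $\{M<M^{*}\}$ and $N=0$ in $\{N<M^{*}\}$.

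The heart of the argument, and the main obstacle, is the passage from the random cone to the clean per-$N$ statistical-dimension bound, i.e.\ Lemma~\ref{lem_bound_P}: one must show that enlarging $P$ to its worst-case shape over magnitudes costs only $\tfrac12(n+\max\{N,M\})$ for $\delta$, which rests on the inequality description of $P$ in Theorem~\ref{thm_description}. Once that bound and the conditioning on $N$ are in place, everything else is routine; the only care needed is to match the two sources of the constant $4+\tfrac{1}{\sqrt2}$ in \eqref{eq_condition_delta}, namely the kinematic slack $4\sqrt{tn}$ (the factor $2$ from \eqref{eq_approx_kin_up} doubled because $\delta(P)\leq\tfrac12(n+N)$ carries $N/2$) and the Hoeffding deviation $\tfrac{1}{\sqrt2}\sqrt{tn}=\sqrt{tn/2}$ used for both binomial tails.
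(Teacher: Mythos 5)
Your overall architecture matches the paper's: reduce to the intersection probability via Proposition \ref{prop_candes}, condition on the number $N$ of positive entries of $v$, use Lemma \ref{lem_bound_P} to control the shape of the cone, apply the approximate kinematic formula \eqref{eq_approx_kin_up}, and dispose of the bad values of $N$ with two Hoeffding tails calibrated by the $\tfrac{1}{\sqrt 2}\sqrt{t/n}$ slack in \eqref{eq_condition_delta}. The constant bookkeeping (the doubling to $4\sqrt{t/n}$ and the $\sqrt{tn/2}$ deviation) is also the same.

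However, there is a genuine gap at the step you yourself identify as the heart of the argument. You claim that, conditional on the sign pattern, $\delta(P)\leq\tfrac12\bigl(n+\max\{N,M\}\bigr)$ almost surely, and you attribute this to Lemma \ref{lem_bound_P} plus a "worst case over magnitudes" heuristic. Lemma \ref{lem_bound_P} does not deliver this: it gives $P\subset\bigl(\mathbb{R}^{N}\times[0,\infty)^{M}\bigr)\cup\bigl([0,\infty)^{N}\times\mathbb{R}^{M}\bigr)$, a \emph{non-convex union}, and $P$ is contained in neither convex piece (the ray $\lambda v$, $\lambda>0$, leaves the first, and $-\lambda v$ leaves the second). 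Monotonicity of the statistical dimension applies only to inclusions of convex cones, so no bound on $\delta(P)$ follows; and the pointwise inequality $\|\Pi_P(g)\|^2\leq\max\{\|\Pi_{P_N}(g)\|^2,\|\Pi_{P'_M}(g)\|^2\}$ only yields $\delta(P)\leq\delta(P_N)+\delta(P'_M)=\tfrac{3n}{2}$ after taking expectations, which is useless. Your claimed bound may well be true, but it would need its own proof. The paper sidesteps the issue entirely: it pushes the inclusion through the \emph{event}, writing $\{P\cap\theta L_{p-1}\neq\{0\}\}\subset\{P_N\cap\theta L_{p-1}\neq\{0\}\}\cup\{P'_M\cap\theta L_{p-1}\neq\{0\}\}$ with $P_N:=\mathbb{R}^{N}\times[0,\infty)^{M}$ and $P'_M:=[0,\infty)^{N}\times\mathbb{R}^{M}$, and applies \eqref{eq_approx_kin_up} separately to these two deterministic cones, whose statistical dimensions $\tfrac{n+N}{2}$ and $n-\tfrac{N}{2}$ are computed exactly from their intrinsic volumes. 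This costs a union bound (two applications of the kinematic estimate per good $N$ instead of one) but requires no statement about $\delta(P)$ itself. If you replace your per-$N$ statistical-dimension claim with this event-level union bound, the rest of your argument goes through essentially verbatim.
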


The proof is provided in Appendix \ref{app_bound_dimension}.
The bound resembles the bound in \eqref{eq_satoshi_upper}, we give a comparison in Section \ref{sec_int_cont}.
The same proof strategy can be applied to derive a bound for the signal-less imbalanced model, as we now show in Remark \ref{rem_bound_imbalanced}.

\begin{rem}\label{rem_bound_imbalanced}
Note that as in Section \ref{sec_fb_f_imbalanced}, we could apply the argument in Theorem \ref{thm_bound_dimension} to the signal-less imbalanced model \eqref{eq_intercept}.
By doing so, we arrive at a similar inequality, namely that if for some $t\geq 0$,
\[
\frac{2p}{n}+\left(4+\frac{1}{\sqrt{2}}\right)\sqrt{\frac{t}{n}}\leq \min\{1-b,b\}
\]
then, the probability of linear separability with intercept is at most:
\[
S_0(n,p)\leq 3\exp\left(-t\right).
\]
\end{rem}

\subsubsection{Tight bound for sign-flip noise}\label{sec_fb_b_sf}

Here, we provide an upper bound for the probability of linear separability given Gaussian features and sign-flip noise.
Although Proposition \ref{prop_bound_sign_flip} does not rely on the kinematic formula, it exploits a common theme in our discussion:
The structure of the problem is determined by the number of `correct labels' $N$, i.e. the number of $y_jx_j^T\beta^*$ which are positive.
The proof strategy we employ also leads to lower bounds, see \eqref{eq_lowerb_sign_flip}.

\begin{prop}\label{prop_bound_sign_flip}
Suppose that $x_j\sim\mathcal{N}(0,I)$.
Under the sign flip noise model \eqref{eq_sign_flip} with $\delta\geq 1/2$, the probability of linear separability without/with intercept satisfies:
\[
S(n,p)
\leq 2\cdot\delta^n\sum_{j=0}^{p-1}{n-1\choose j},
\quad
S_0(n,p)
\leq 2\cdot\delta^n\sum_{j=0}^{p}{n-1\choose j}.
\]
Under the signal-less imbalanced model \eqref{eq_intercept} with $b\geq 1/2$, the probability of linear separability with intercept satisfies:
\[
S_0(n,p)
\leq 2\cdot b^n\sum_{j=0}^{p}{n-1\choose j}.
\]
\end{prop}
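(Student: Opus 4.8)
The plan is to bypass the kinematic formula altogether and reduce everything to the classical count of halfspace dichotomies underlying Cover's identity \eqref{eq_cover}. After rotating so that $\beta^*=e_1$, write $w_j:=sign(x_j^T\beta^*)\,x_j$ for the ``uncorrupted'' data point and let $\sigma_j\in\{+1,-1\}$ indicate whether observation $j$ carries a correct label ($\sigma_j=+1$, probability $\delta$) or a flipped one ($\sigma_j=-1$, probability $1-\delta$). In the sign-flip model the $\sigma_j$ are i.i.d. and independent of the features, and the observed product is $y_jx_j=\sigma_j w_j$. Conditioning on the features and summing over all sign patterns,
\[
S(n,p)=\mathbb{E}\Bigg[\sum_{\sigma\in\{+1,-1\}^n}\delta^{\#\{j:\sigma_j=1\}}(1-\delta)^{\#\{j:\sigma_j=-1\}}\,\mathbf{1}\{\sigma_1w_1,\dots,\sigma_nw_n\text{ lie in an open halfspace through the origin}\}\Bigg].
\]

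The argument then rests on two observations. First, since $\delta\ge 1/2$ we have $(1-\delta)/\delta\le 1$, so every weight obeys $\delta^{\#\{+\}}(1-\delta)^{\#\{-\}}\le\delta^n$; pulling this out of the sum reduces the problem to bounding the expectation of the integer
\[
D(w):=\#\{\sigma\in\{+1,-1\}^n:\sigma_1w_1,\dots,\sigma_nw_n\text{ lie in an open halfspace through the origin}\}.
\]
Second, a pattern $\sigma$ is counted by $D(w)$ precisely when there exists a $\beta$ with $sign(w_j^T\beta)=\sigma_j$ for all $j$, i.e. when $\sigma$ labels a full-dimensional cell of the central hyperplane arrangement with normals $w_1,\dots,w_n$ in $\mathbb{R}^p$. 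Hence $D(w)$ is the number of cells of this arrangement, which---whenever the normals are in general position---is exactly the Schläfli number $2\sum_{k=0}^{p-1}\binom{n-1}{k}$ from \eqref{eq_cover}. Since $x_j$ is Gaussian, each $w_j$ has an absolutely continuous law supported on $\{w_1>0\}$, so the normals are almost surely in general position and $D(w)=2\sum_{k=0}^{p-1}\binom{n-1}{k}$ almost surely. Combining the two observations yields $S(n,p)\le 2\,\delta^n\sum_{k=0}^{p-1}\binom{n-1}{k}$; replacing the weight bound $\delta^n$ by the minimal weight $(1-\delta)^n$ gives the matching lower bound referred to in the text.

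For the two bounds with an intercept I would run the identical argument one dimension higher, using the lifted points $\tilde x_j:=(1,x_j^T)^T\in\mathbb{R}^{p+1}$. In the sign-flip-with-intercept case $y_j\tilde x_j=\sigma_j\tilde w_j$ with $\tilde w_j:=sign(x_j^T\beta^*)\tilde x_j$, so the same weight bound $\delta^n$ applies; in the signal-less imbalanced model the labels $y_j$ themselves supply the sign pattern, with weight $b^{\#\{+\}}(1-b)^{\#\{-\}}\le b^n$ because $b\ge 1/2$. In both cases $D$ now counts the cells of an arrangement of $n$ central hyperplanes in $\mathbb{R}^{p+1}$, equal to $2\sum_{k=0}^{p}\binom{n-1}{k}$ once the lifted normals $(1,x_j^T)^T$ are in general position, which again holds almost surely for Gaussian features.

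The probability bookkeeping is routine; the only step that needs genuine care is the identification of $D(w)$ with the exact Schläfli cell count. I must verify that every $\sigma\in\{+1,-1\}^n$ realized by a separating direction corresponds to an \emph{open} cell---this is automatic because the separation inequalities are strict, so the measure-zero boundary faces play no role---and, crucially, that the Gaussian-derived normals $w_j$ (respectively the lifted normals $(1,x_j^T)^T$) almost surely satisfy the linear-independence condition that turns the generic upper bound for cell counts into the stated equality.
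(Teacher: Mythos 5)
Your proof is correct and is essentially the paper's argument: both exploit that in the sign-flip model the pattern of correct/flipped labels is independent of the (half-space-restricted) features, bound the probability of each pattern by $\delta^n$ (resp.\ $b^n$), and then reduce to the balanced case. The only cosmetic difference is that you re-derive the Schl\"afli cell count $2\sum_{k=0}^{p-1}\binom{n-1}{k}$ directly for the arrangement with normals $w_j$, whereas the paper conditions on the coarser statistic $N$ (the number of correct labels) and invokes Cover's identity \eqref{eq_cover} for $S(n,p,1/2)$ as a black box.
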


Proposition \ref{prop_bound_sign_flip} is proved in Appendix \ref{app_bounds}.
We note that all three inequalities are sharp, as they recover Cover's identity \eqref{eq_cover} in the case $\delta=1/2$, respectively $b=1/2$.
We show that the rate $p/n\log (en/p)$ holds for the probability of linear separability without intercept (the same argument can be applied to the other two cases).
Note that:
\[
2\delta^{n}\sum_{i=0}^{p-1}{n-1\choose i}\leq 2\exp\left(-n(1-\delta)\right)\left(\frac{e(n-1)}{p-1}\right)^{p-1}.
\]
Consequentially,
\begin{equation}\label{eq_bound}
1-\delta\geq \frac{p-1}{n}\log\frac{e(n-1)}{p-1}+\frac{t}{n}\quad \Rightarrow\quad S(n,p)\leq 2\exp(-t).
\end{equation}

We compare Proposition \ref{prop_bound_sign_flip} and the rate \eqref{eq_bound} with the other available upper bounds in Section \ref{sec_int_cont}.

\begin{rem}\label{rem_sign_flip_lowerb}
Arguing as in the proof of Proposition \ref{prop_bound_sign_flip}, we can provide lower bounds of the following form:
\begin{equation}\label{eq_lowerb_sign_flip}
2\cdot(1-\delta)^n\sum_{j=0}^{p-1}{n-1\choose j}\leq 
S(n,p).
\end{equation}
These lower bounds are sharp, as they recover Cover's identity \eqref{eq_cover} at $\delta=1/2$.
Yet, they are worse than the universal lower bound by \cite[Corollary 3.7]{wagner2001continuous}.
\end{rem}

\section{Properties of the cone $\spa\{v\}\oplus [0,\infty)^n$}

The probability of linear separability with Gaussian features is a sum of the intrinsic volumes of a polyhedral cone, see Section \ref{subsub_kinematic}.
This cone is defined as follows: For a $v\in(\mathbb{R}\setminus \{0\})^n$,
\begin{equation}\label{eq_defn_P}
P:=\{\lambda v+u:\lambda\in\mathbb{R},u\in[0,\infty)^n\}=\spa\{v\}\oplus[0,\infty)^n.
\end{equation}
In this section, we study this polyhedral cone in depth.
We provide an inequality description of $P$ in Section \ref{sec_description}.
Next, we describe the projection of a vector in $\mathbb{R}^n$ onto $P$ in Section \ref{sec_projection}.
Finally, we calculate the intrinsic volumes of $P$, i.e. the probability that a Gaussian projected onto $P$ lands on a face of a certain dimension, in Section \ref{sec_intrinsic}.
This last section will require a formula for a version of Youden's demon problem, which we provide in Section \ref{sec_youden}.

Throughout this section, $v\in(\mathbb{R}\setminus \{0\})^n$ is treated as a fixed, deterministic vector.
This contrasts with other parts of this work, where $v_j$ is defined as $y_jx_j^T\beta^*/\|\beta^*\|_2$, making it a random variable.
Moreover, we will later use $x,y$ to denote arbitrary elements of $\mathbb{R}^n$.
It may help to keep in mind that Sections \ref{sec_description} and Section \ref{sec_projection} are entirely deterministic.

\subsection{Inequality description}\label{sec_description}

In this subsection, we provide an inequality description for the interpolation cone.
Throughout, we define the cone $P\subset\mathbb{R}^n$ as in \eqref{eq_defn_P}, for some fixed $v\in\mathbb{R}^n$ with no zero entries: $v_i\neq 0$ for all  $i\in[n]$.
We start with Lemma \ref{lem_polar}, which gives us a first understanding of the faces of $P$.
The polar of a closed convex cone $C\subset\mathbb{R}^n$ is defined as:
\[
C^\circ:=\bigcap_{c\in C}\{x\in \mathbb{R}^n: x^Tc\leq 0\}.
\]
In Lemma \ref{lem_polar}, we describe the polar of our polyhedral cone $\spa\{v\}\oplus [0,\infty)^n$.

\begin{lem}\label{lem_polar}
\[
\left(
[0,\infty)^n\oplus \spa\{v\}
\right)^\circ
=
(-\infty,0]^n\cap \spa\{v\}^\perp.
\]
\end{lem}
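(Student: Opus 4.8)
The plan is to compute the polar cone directly from its definition by using the Minkowski-sum structure of $P$. The key observation is that the polar of a Minkowski sum of two cones is the intersection of their individual polars: for closed convex cones $A$ and $B$, one has $(A\oplus B)^\circ = A^\circ \cap B^\circ$. This is a standard fact, but since the excerpt does not state it explicitly, I would prove it inline from the definition rather than cite it. Applying this to $A = \spa\{v\}$ and $B = [0,\infty)^n$ reduces the problem to computing the two polars separately and intersecting them.

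First I would establish the identity $(A\oplus B)^\circ = A^\circ \cap B^\circ$. The inclusion $\subseteq$ is immediate: if $x^T(a+b)\le 0$ for all $a\in A, b\in B$, then taking $b=0$ (which lies in both cones, being closed convex cones containing the origin) gives $x^Ta\le 0$ for all $a\in A$, and symmetrically $x^Tb\le 0$ for all $b\in B$. For $\supseteq$, if $x\in A^\circ\cap B^\circ$ then $x^Ta\le 0$ and $x^Tb\le 0$ for all $a,b$, so $x^T(a+b)\le 0$, placing $x$ in $(A\oplus B)^\circ$.

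Next I would identify the two individual polars. For the linear subspace $\spa\{v\}$, a vector $x$ satisfies $x^T(\lambda v)\le 0$ for all $\lambda\in\mathbb{R}$ if and only if $x^Tv = 0$ (the sign constraint for both $\lambda>0$ and $\lambda<0$ forces equality), so $(\spa\{v\})^\circ = \spa\{v\}^\perp$. For the nonnegative orthant, the polar of a self-dual-type cone is the nonpositive orthant: $x^Tu\le 0$ for all $u\in[0,\infty)^n$ holds if and only if each coordinate $x_i\le 0$ (test against the standard basis vectors $e_i\in[0,\infty)^n$), so $([0,\infty)^n)^\circ = (-\infty,0]^n$. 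Intersecting these two yields $(-\infty,0]^n\cap\spa\{v\}^\perp$, which is exactly the claimed right-hand side.

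I do not anticipate a serious obstacle here; the statement is essentially a bookkeeping exercise once the sum-polar identity is in hand. The only point requiring mild care is the direction of the sign conventions: the polar is defined in the excerpt with $x^Tc\le 0$ (rather than $\ge 0$), so I would double-check that the orthant polar comes out as the \emph{non-positive} orthant and that the subspace contribution is genuinely the orthogonal complement $\spa\{v\}^\perp$ (with equality, not an inequality). These sign checks are routine but are the place where an error would most plausibly creep in.
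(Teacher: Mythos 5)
Your proposal is correct and matches the paper's argument in substance: the paper likewise notes the general identity $(C_1\oplus C_2)^\circ = C_1^\circ\cap C_2^\circ$ (citing Schneider) and its direct proof uses exactly your test vectors, namely the standard basis vectors $e_j$ for the orthant and $\pm v$ for the span. The only difference is organizational — you factor the argument through the sum-polar identity proved inline, while the paper verifies the two inclusions for the specific cone directly — and both routes are sound.
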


Lemma \ref{lem_polar} is a consequence of a known identity about the polar of convex cones.
Namely, if $C_1,C_2$ are closed convex cones, then $(C_1\oplus C_2)^\circ = C_1^\circ\cap C_2^\circ$, see e.g. \cite[Theorem 1.6.9]{schneider2014convex}.
Yet, we provide a short proof below.

\begin{proof}
Choose any $u\in[0,\infty)^n$, any $\alpha\in\mathbb{R}$ and any $x\in (-\infty,0]^n\cap \spa\{v\}^\perp$.
It holds that:
\[
\langle u+\alpha v,x\rangle
=\langle u,x\rangle \leq 0
\]
This shows the inclusion $\supseteq$.
We establish the inclusion $\subseteq$ by contraposition.
Suppose that $x\not\in(-\infty,0]^n$.
So, there is an index $j\in[n]$, such that $x_j>0$.
Let $e_j\in[0,\infty)^n$ denote the $j$-th unit vector.
We have $\langle x,e_j\rangle =x_j>0$, so $x$ is not in the polar set of $[0,\infty)^n\oplus\spa\{v\}$.
Now take any $x\not\in \spa\{v\}^\perp$.
Since $v$ and $-v$ both lie in $\spa\{v\}$ and either $\langle x,v\rangle >0$ or $\langle x,-v\rangle >0$, this $x$ is not in the polar set either.
The proof is complete.
\end{proof}

We now introduce the inequality description of the interpolation cone $P$.
The key idea is the splitting of the indices $[n]$ into two sets, say the `left' and the `right' set.
The elements of $v$ indexed by the left side are positive, and the elements indexed by the right side are negative.
Without loss of generality, we assume that the values $v_l>0$ are positive if and only if their indices lie in $l\in\{1,\ldots,N\}$, whereas the values $v_r$ indexed by $r\in\{N+1,\ldots,n\}$ are negative, i.e. $v_r<0$.
We will see that any inequality describing $P$ is of the form:
\[
-\frac{x_l}{v_l}+\frac{x_{r}}{v_{r}}=-\frac{x_l}{|v_l|}-\frac{x_{r}}{|v_{r}|}\leq 0,
\]
for some $(l,r)\in\mathcal{I}_N:=\{1,\ldots,N\}\times\{N+1,\ldots,n\}$.
Note that the vector $a^{lr}:=-e_l/|v_l|-e_r/|v_r|$  lies in $\ker\{v\}\cap(-\infty,0]^n$.
In view of Lemma \ref{lem_polar}, this suggests that we are on the right track.
In Theorem \ref{thm_description} we see that all these inequality descriptions are necessary, and that indeed each of these inequalities defines a facet of $P$.

\begin{thm}\label{thm_description}
Fix $v\in \mathbb{R}^n$ and $N\in\{1,\ldots,n-1\}$.
If $v_l>0$ for $l\in\{1,\ldots,N\}$ and $v_r<0$ for $r\in\{N+1,\ldots,n\}$, then, an inequality description for the polyhedral cone $P:=\spa\{v\}\oplus[0,\infty)^n$ is:
\begin{equation}\label{eq_description}
P=\bigcap_{(l,r)\in\mathcal{I}_N}\left\{
x\in\mathbb{R}^n:\frac{x_l}{v_l}\geq \frac{x_r}{v_r}
\right\}.
\end{equation}
Moreover, every inequality in this description of $P$ is facet-defining.
\end{thm}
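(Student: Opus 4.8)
The plan is to prove the identity \eqref{eq_description} by a direct double inclusion and then to verify each facet claim by computing the corresponding face explicitly and counting dimensions. For the inequality description I would work directly with the generating decomposition $x=\lambda v+u$ with $u\in[0,\infty)^n$. The single observation driving both inclusions is that, for a fixed $x$, the slack $u=x-\lambda v$ is nonnegative for some $\lambda$ if and only if $x_j-\lambda v_j\geq 0$ for every $j$. Splitting by the sign of $v$, for $l\leq N$ (where $v_l>0$) this reads $\lambda\leq x_l/v_l$, while for $r>N$ (where $v_r<0$) dividing by the negative $v_r$ reverses the inequality to $\lambda\geq x_r/v_r$. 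Hence a feasible $\lambda$ exists precisely when $\max_{r} x_r/v_r\leq \min_{l} x_l/v_l$, which is exactly the system in \eqref{eq_description}. Both inclusions $P\subseteq\bigcap(\cdots)$ and $\bigcap(\cdots)\subseteq P$ then fall out immediately, so this half is routine.

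For the facet claim I would fix $(l,r)\in\mathcal{I}_N$ and study the face $F_{lr}:=P\cap\{x:x_l/v_l=x_r/v_r\}$ obtained by saturating that single inequality. Writing a point as $\lambda v+u$, the saturation condition becomes $u_l/v_l=u_r/v_r$; since $u_l,u_r\geq 0$ while $v_l>0$ and $v_r<0$, the left-hand side is $\geq 0$ and the right-hand side is $\leq 0$, forcing $u_l=u_r=0$. Therefore $F_{lr}=\spa\{v\}\oplus\{u\in[0,\infty)^n:u_l=u_r=0\}$. This set contains the $n-1$ vectors $v$ and $\{e_j:j\in[n]\setminus\{l,r\}\}$, and these are linearly independent: in any vanishing combination $\alpha v+\sum_{j\neq l,r}c_j e_j=0$ the $l$-th coordinate equals $\alpha v_l$, forcing $\alpha=0$ and then all $c_j=0$. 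Since $F_{lr}$ is a cone (it contains the origin), its dimension equals the dimension of its linear span, so $\dim F_{lr}\geq n-1$; as $F_{lr}$ lies in the hyperplane $\{x:\langle a^{lr},x\rangle=0\}$ and $P$ is full-dimensional, we get $\dim F_{lr}=n-1$ and the inequality is facet-defining.

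I expect the inequality description to be the easy half; the genuine content is the facet computation, where the only place the sign structure of $v$ is truly essential is the deduction that saturating $x_l/v_l=x_r/v_r$ forces \emph{both} slacks $u_l$ and $u_r$ to vanish. Everything downstream—identifying $F_{lr}$ with a shifted lower-dimensional orthant and exhibiting $n-1$ independent vectors in it—is then a short dimension count. One minor bookkeeping point to keep in mind is the use of the fact that the dimension of a face of a cone equals the dimension of its linear span, which is what lets the explicit list of $n-1$ independent vectors, together with the hyperplane containment, pin the face dimension down exactly.
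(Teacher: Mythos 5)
Your proof is correct, but it takes a genuinely different route from the paper's. For the identity \eqref{eq_description}, the paper proves only the inclusion $P\subseteq P'$ directly; for the converse it invokes the polar-cone description (Lemma \ref{lem_polar}) to reduce to inequalities with normals $w\in\ker\{v\}\cap(-\infty,0]^n$ and then runs a constructive procedure (Algorithm \ref{alg_1}) to express every such $w$ as a nonnegative combination of the generators $a^{lr}=-e_l/|v_l|-e_r/|v_r|$. Your elimination of the free parameter $\lambda$ (a Fourier--Motzkin step along the lineality direction $\spa\{v\}$: a feasible $\lambda$ exists iff $\max_r x_r/v_r\leq\min_l x_l/v_l$) yields both inclusions at once and is substantially shorter; what it does not hand you as a byproduct is the explicit generator description $P^\circ=\mathrm{cone}\{a^{lr}:(l,r)\in\mathcal{I}_N\}$ that the paper's argument establishes en route. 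For the facet claim the paper argues by non-redundancy ($P$ is full-dimensional, so non-redundant inequalities are facet-defining, and $a^{kq}$ cannot be a positive combination of the remaining $a^{lr}$ since any such combination has a strictly negative entry outside $\{k,q\}$), whereas you compute the face $F_{lr}=\spa\{v\}\oplus\{u\in[0,\infty)^n:u_l=u_r=0\}$ explicitly and count dimensions. Your key step --- that saturating $x_l/v_l=x_r/v_r$ forces $u_l=u_r=0$ in \emph{every} representation $x=\lambda v+u$, because $u_l/v_l\geq 0$ must equal $u_r/v_r\leq 0$ --- is sound, and the $n-1$ independent vectors $v,\{e_j\}_{j\neq l,r}$ together with containment in the hyperplane $\{x:\langle a^{lr},x\rangle=0\}$ pin the face dimension at $n-1$. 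Both arguments are valid; yours is more elementary and additionally identifies the facets themselves, while the paper's yields the conic decomposition of the polar that motivates its later projection analysis.
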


Another way of writing this inequality description of $P$ is:
\[
P=\left\{x\in\mathbb{R}^n:\min_{1\leq l\leq N}\frac{x_l}{v_l}\geq \max_{N+1\leq r\leq n}\frac{x_r}{v_r}\right\}.
\]
Visualizing the cone $P$ is not straightforward, as the nature of its structure only shows in higher dimensions.
It may be helpful to view it as a bipartite graph, see Figure \ref{fig_bipartite}.
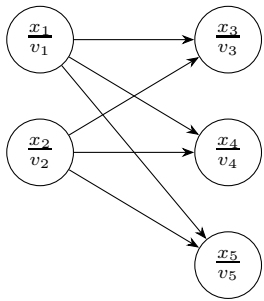
\begin{figure}[htb]
    \centering
    \begin{tikzpicture}[>=Stealth, auto, node distance=1.5cm]
    
    \node[draw, circle] (x1l) {$\frac{x_1}{v_1}$};
    \node[draw, circle, below of=x1l] (x1l2) {$\frac{x_2}{v_2}$};
    
    \node[draw, circle, right of=x1l, node distance=2.5cm] (x1r1) {$\frac{x_3}{v_3}$};
    \node[draw, circle, below of=x1r1] (x1r2) {$\frac{x_4}{v_4}$};
    \node[draw, circle, below of=x1r2] (x1r3) {$\frac{x_5}{v_5}$};
    
    \draw[->] (x1l) -- (x1r1);
    \draw[->] (x1l) -- (x1r2);
    \draw[->] (x1l) -- (x1r3);
    \draw[->] (x1l2) -- (x1r1);
    \draw[->] (x1l2) -- (x1r2);
    \draw[->] (x1l2) -- (x1r3);
    \end{tikzpicture}
    \caption{
    The inequalities in the description of the cone $P$ in Theorem \ref{thm_description} form a complete bipartite graph.
    In this example, $N=2$ and $n-N=3$. Each node represents an index in $\{1,\ldots,n\}$, and each edge represents an inequality in the description of $P$. The arrow represents the direction of the inequality, as the constraints of $P$ can be written as $x_1/v_1\geq x_3/v_3$ et cetera.}
    \label{fig_bipartite}
\end{figure}

Despite the description of $P$ given in Theorem \ref{thm_description}, we will see that working with $P$ remains challenging.
It may help to use the following approximation, for which the intrinsic volumes are easy to calculate.
We will use this result to prove the upper bound in Theorem \ref{thm_bound_dimension}.

\begin{lem}\label{lem_bound_P}
Fix $v\in \mathbb{R}^n$ and $N\in\{1,\ldots,n-1\}$.
If $v_l>0$ for $l\in\{1,\ldots,N\}$ and $v_r<0$ for $r\in\{N+1,\ldots,n\}$, then:
\[
\spa\{v\}\oplus[0,\infty)^n\subset \left(\mathbb{R}^{N}\times[0,\infty)^{n-N}\cup[0,\infty)^{N}\times\mathbb{R}^{n-N}\right).
\]
\end{lem}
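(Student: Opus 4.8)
The plan is to unpack what the inclusion is claiming and verify it pointwise. An arbitrary element of $P=\spa\{v\}\oplus[0,\infty)^n$ has the form $x=\lambda v+u$ with $\lambda\in\mathbb{R}$ and $u\in[0,\infty)^n$. Writing this out coordinatewise, $x_j=\lambda v_j+u_j$ with $u_j\geq 0$. Since $v_l>0$ for $l\leq N$ and $v_r<0$ for $r>N$, the sign of $\lambda v_j$ depends on the sign of $\lambda$. The natural case split is therefore on the sign of $\lambda$: I would argue that $\lambda\geq 0$ forces $x$ into the first set of the union, and $\lambda\leq 0$ forces $x$ into the second set (with $\lambda=0$ landing in both, giving $u=x\in[0,\infty)^n$, which is consistent).

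Concretely, first I would treat $\lambda\geq 0$. For the right indices $r\in\{N+1,\ldots,n\}$ we have $v_r<0$, but I want a lower bound on $x_r$, so this case looks backwards — let me instead take $\lambda\geq 0$ and examine the left block. Since $v_l>0$ and $\lambda\geq 0$, we get $\lambda v_l\geq 0$, hence $x_l=\lambda v_l+u_l\geq 0$ for all $l\leq N$; there is no constraint on the right coordinates, so $x\in\mathbb{R}^{N}\times[0,\infty)^{n-N}$ fails unless I have matched the blocks correctly. The correct pairing is: when $\lambda\geq 0$, the \emph{right} coordinates satisfy $x_r=\lambda v_r+u_r$ with $\lambda v_r\leq 0$, which is not automatically nonnegative, whereas the left coordinates are nonnegative. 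So $\lambda\geq 0$ places $x$ in $[0,\infty)^{N}\times\mathbb{R}^{n-N}$ (left block nonnegative, right block free). Symmetrically, when $\lambda\leq 0$, we have $\lambda v_r\geq 0$ for $r>N$, so $x_r\geq 0$ and $x$ lands in $\mathbb{R}^{N}\times[0,\infty)^{n-N}$ (right block nonnegative, left block free).

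The main step is thus just this sign bookkeeping: for either sign of $\lambda$, exactly one of the two blocks is guaranteed nonnegative because $\lambda v_j\geq 0$ there, and $u_j\geq 0$ always. Since every real $\lambda$ is either $\geq 0$ or $\leq 0$, every $x\in P$ lies in one of the two sets, establishing the inclusion into their union. I expect no genuine obstacle here — the only thing to be careful about is getting the block/sign correspondence right (which block is forced nonnegative for which sign of $\lambda$), and noting that the boundary case $\lambda=0$ is harmless since it lands in both sets. It is worth emphasizing that the inclusion is generally strict and that the union on the right is \emph{not} convex, so this is an outer approximation of $P$ by a simpler (non-convex) set whose two pieces are products of a full space with an orthant; this is precisely the structure exploited later, since the statistical dimension of each such piece is easy to compute.
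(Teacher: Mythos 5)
Your proof is correct, but it takes a different route from the paper. You argue directly from the generator description: every $x\in\spa\{v\}\oplus[0,\infty)^n$ is $\lambda v+u$ with $u\geq 0$, and splitting on the sign of $\lambda$ shows that one of the two coordinate blocks is forced to be nonnegative (left block if $\lambda\geq 0$, since $\lambda v_l\geq 0$ there; right block if $\lambda\leq 0$, since $\lambda v_r\geq 0$ there), with $\lambda=0$ landing in both. Your final block/sign pairing is right, though the paragraph where you first mismatch the blocks and then correct yourself should be cleaned up before this goes anywhere. The paper instead argues by contraposition from the inequality description of Theorem \ref{thm_description}: if $x$ is in neither set of the union, then $x_l<0$ and $x_r<0$ for some $l\leq N<r$, whence $\min_l x_l/|v_l|+\min_r x_r/|v_r|<0$ and the facet inequality $x_l/v_l\geq x_r/v_r$ fails. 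The paper's version is a one-liner given that Theorem \ref{thm_description} is already on the table (and it only needs the easy inclusion $P\subset P'$ of that theorem); yours is fully self-contained and arguably more transparent, since it needs nothing beyond the definition of the Minkowski sum. Either is acceptable.
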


\begin{proof}
We proceed by contraposition.
Take any $x$ not contained in the set $\mathbb{R}^{N}\times[0,\infty)^{n-N}\cup[0,\infty)^{N}\times\mathbb{R}^{n-N}$.
There exists an $l\in\{1,\ldots,N\}$ and an $r\in\{N+1,\ldots,n\}$, such that $x_l<0$ and $x_r<0$.
Therefore, 
\[
\min_{1\leq l\leq N}\frac{x_l}{|v_l|}+
\min_{N+1\leq r\leq n}\frac{x_r}{|v_r|}<0.
\]
By Theorem \ref{thm_description}, $x\not\in \spa\{v\}\oplus[0,\infty)^n$.
The proof is complete.  
\end{proof}

The proof of Theorem \ref{thm_description} is given in Appendix \ref{app_description}.
The inclusion $\supset$ takes work, while the inclusion $\subset$ is straightforward.

\subsection{Projection}\label{sec_projection}

Theorem \ref{thm_description} provides an inequality description of the interpolation cone $P$.
Next, we find the projection onto $P$.
This takes a bit of work, so we first give an example.

\begin{exa}\label{exa_projection}
Suppose $n=5$ and $v_1,v_2>0>v_3,v_4,v_5$.
That is, $N=2$.
If $x\in \mathbb{R}^n$ is such that $x_i/v_i>x_j/v_j$ for all $(i,j)\in\mathcal{I}_N=\{1,2\}\times\{3,4,5\}$, then $x\in P$.
So, $x$ is equal to its projection onto $P$. 
This is the situation illustrated in Figure \ref{fig_bipartite}.
Now consider a situation where some constraints are violated, such as:
\[
\frac{x_5}{v_5}<
\mathbf{\frac{x_2}{v_2}<
\frac{x_4}{v_4}<
\frac{x_3}{v_3}<}
\frac{x_1}{v_1}
\]
Each of the indices $2,3,4$ is contained in an inequality, for which a constraint is violated. 
This is illustrated in Figure \ref{fig_bipartite_violated}.

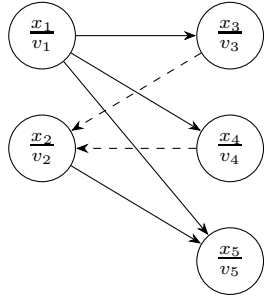
\begin{figure}[htb]
    \centering
    \begin{tikzpicture}[>=Stealth, auto, node distance=1.5cm]
    
    \node[draw, circle] (x1l) {$\frac{x_1}{v_1}$};
    \node[draw, circle, below of=x1l] (x1l2) {$\frac{x_2}{v_2}$};
    
    \node[draw, circle, right of=x1l, node distance=2.5cm] (x1r1) {$\frac{x_3}{v_3}$};
    \node[draw, circle, below of=x1r1] (x1r2) {$\frac{x_4}{v_4}$};
    \node[draw, circle, below of=x1r2] (x1r3) {$\frac{x_5}{v_5}$};
    
    \draw[->] (x1l) -- (x1r1);
    \draw[->] (x1l) -- (x1r2);
    \draw[->] (x1l) -- (x1r3);
    \draw[->,dashed] (x1r1) -- (x1l2);
    \draw[->,dashed] (x1r2) -- (x1l2);
    \draw[->] (x1l2) -- (x1r3);
    \end{tikzpicture}
    \caption{A visual representation of the inequalities in Example \ref{exa_projection}.
    The arrows pointing right symbolize inequalities in the description of $P$ that are satisfied.
    The dashed arrows pointing left show the violated constraints.}
    \label{fig_bipartite_violated}
\end{figure}

Informally, the projection maps $x$ into $P$, with the least amount of `effort' in terms of the Euclidean distance.
Here is a naive attempt to project onto $P$: 
`Average' the values of $x$ at nodes with violated constraints, and leave the others unchanged.
This gives us a candidate for the projection, which we call $y:=(x_1,v_1\Sigma,v_2\Sigma,v_3\Sigma,x_5)$, where $\Sigma$ is an average with appropriately chosen weights.
This $y$ indeed lies in $P$ by Lemma \ref{lem_projection_in_P}.
While $y$ may coincide with the projection (see Example \ref{exa_projection_success}) this is not always the case (see Example \ref{exa_projection_fail}).

A slightly more sophisticated procedure is needed (see Algorithm \ref{alg_2}).
Instead of averaging over all nodes that violate the constraints, we only average over some, leaving the other indices unchanged.
Such a subset must include the `worst offenders' from both sides of the graph, otherwise the inequalities cannot be satisfied.
To see what these `worst offenders' would be, note that we want the values on the left side to be larger than the values on the right side. 
So, the `worst offenders' on the left are the smallest values on the left, while the ones on the right are the largest values.
In our example, this would be $x_2/v_2$ on the left and $x_3/v_3$ on the right.
Algorithm \ref{alg_2} first checks if after averaging those, the candidate vector lies in $P$.
If not, the next smallest values from the left and the next largest values from the right are added, until the vector lies in $P$.
It turns out that it does not matter whether the offenders are first added from the left or the right, as adding them from the left (right) only decreases (increases) the average, leaving the remaining set of offenders unchanged (Lemma \ref{lem_average_monotone}).
In Theorem \ref{thm_alg_2_correct}, we show that Algorithm \ref{alg_2} indeed produces the projection of $x$ onto $P$.
Consequentially, the projection is obtained by averaging the nodes corresponding to the indices $2,3$, or $2,3,4$, depending on the values of the ratios $x_i/v_i$.
\end{exa}

We proceed as follows: 
We first construct a collection of vectors $y^{LR}(x)$.
We call these the `candidate projections'.
Later we will see that one candidate is indeed the projection $x$ onto $P$.
First, we show that the most restrictive one lies in $P$ (Lemma \ref{lem_projection_in_P}).
Next, we introduce Algorithm \ref{alg_2}, which constructs the projection of $x$ onto $P$.
This is proved in Theorem \ref{thm_alg_2_correct}, where we leverage Lemma \ref{lem_projection_in_P}.
To characterize the projection of $x$ onto $P$, we make use of the following proposition.
A proof can be found in \cite[Proposition 3.2.3]{hiriart2013convex}.

\begin{prop}\label{prop_projection}
Let $K$ be a closed convex cone. Then, $y\in\mathbb{R}^n$ is the projection of $x\in\mathbb{R}^n$ onto $K$, if and only if all of the following conditions are met:
\begin{enumerate}
\item $y\in K$,
\item $x-y\in K^\circ$,
\item $\langle x-y,y\rangle =0$.
\end{enumerate} 
\end{prop}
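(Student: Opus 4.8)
The plan is to reduce everything to the standard variational characterization of the Euclidean projection onto a closed convex set, and then to exploit the extra structure of a cone — closure under positive scaling, together with the fact that $0\in K$ (which follows from closedness by letting $\lambda\to 0^+$) — to sharpen that characterization into the three stated conditions. The single fact I would take as a black box is: for any closed convex set $K$, a point $y\in K$ equals $\Pi_K(x)$ if and only if $\langle x-y,\,z-y\rangle\le 0$ for every $z\in K$. Both directions of the proposition will be extracted from this inequality.

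For the necessity direction, I would assume $y=\Pi_K(x)$, so condition 1 holds immediately and the variational inequality is available. I would first substitute the scaled test points $z=2y\in K$ and $z=0\in K$, both admissible precisely because $K$ is a cone containing the origin. The choice $z=2y$ yields $\langle x-y,\,y\rangle\le 0$, while $z=0$ yields $\langle x-y,\,y\rangle\ge 0$; together these force $\langle x-y,\,y\rangle=0$, which is condition 3. Feeding this back into the variational inequality gives $\langle x-y,\,z\rangle\le\langle x-y,\,y\rangle=0$ for all $z\in K$, which is exactly the statement $x-y\in K^\circ$, i.e.\ condition 2.

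For the sufficiency direction, I would assume $y$ satisfies all three conditions and verify directly that $y$ minimizes $\|x-z\|$ over $z\in K$ by expanding
\[
\|x-z\|^2=\|x-y\|^2+2\langle x-y,\,y-z\rangle+\|y-z\|^2.
\]
Condition 3 turns the cross term into $-\langle x-y,\,z\rangle$, and condition 2 makes this quantity nonnegative for every $z\in K$ (by definition of the polar cone), so $\|x-z\|^2\ge\|x-y\|^2+\|y-z\|^2\ge\|x-y\|^2$. Hence $y$ is the (unique, since equality forces $z=y$) minimizer, establishing $y=\Pi_K(x)$.

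The statement is classical — it is Moreau's conic decomposition in disguise — so I do not expect a serious obstacle. The only point demanding care is the interface between the generic convex-set inequality and the conic structure: one must remember that both $0$ and $2y$ lie in $K$ so that the two test directions are legitimate, and one must observe that condition 3 (orthogonality) is exactly what converts the variational inequality into the polarity statement of condition 2. Everything else is routine algebra.
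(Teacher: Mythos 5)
Your proof is correct: the paper does not prove Proposition \ref{prop_projection} itself but defers to \cite[Proposition 3.2.3]{hiriart2013convex}, and your argument is exactly the standard one given there (test points $z=0$ and $z=2y$ in the variational inequality for necessity, and the expansion of $\|x-z\|^2$ for sufficiency). The only pedantic caveat is that $0\in K$ requires $K$ to be nonempty in addition to closed, which is implicit whenever a projection exists.
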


\subsubsection*{Construction of the candidate projections}
Here, we construct the candidates for the projection of a vector $x$ onto $P$, which we denote by $y$.
We show in Lemma \ref{lem_projection_in_P} that one of them lies in $P$.
It may or may not be the projection of $x$ onto $P$, see Examples \ref{exa_projection_success} and \ref{exa_projection_fail}.
We first introduce some necessary notation.
Recall that $\mathcal{I}_N:=\{1,\ldots,N\}\times\{N+1,\ldots,n\}$.
Fix $v\in\mathbb{R}^n$ and $N\in\{1,\ldots,n-1\}$ such that $v_l>0$ for $l\in\{1,\ldots,N\}$ and $v_r<0$ for $r\in\{N+1,\ldots,n\}$.
Define: 
\[
P:=\bigcap_{(l,r)\in \mathcal{I}_N}\left\{x\in\mathbb{R}^n:\frac{x_l}{v_l}\geq \frac{x_r}{v_r} \right\}.
\]
Fix any $x\in \mathbb{R}^n$.
We define $\bar{L}(x)$ as the set of indices in $\{1,\ldots,N\}$ at which $x$ violates any constraint of $P$, and similarly $\bar{R}(x)$:
\begin{equation}\label{eq_LR_bar_def}
\bar{L}(x):=\bigcup_{r=N+1}^n\left\{l\leq N:\frac{x_l}{v_l}<\frac{x_r}{v_r}\right\},\quad 
\bar{R}(x):=\bigcup_{l=1}^N\left\{r> N:\frac{x_l}{v_l}<\frac{x_r}{v_r}\right\}.
\end{equation}
We use the notation $L$ and $R$ to mimic the left and right parts of the graph visualized in Figures \ref{fig_bipartite} and \ref{fig_bipartite_violated}.
We define for any $(L,R)\subset\mathcal{I}_N$:
\begin{equation}\label{eq_defn_projection}
\Sigma_{LR}(x):=\frac{\sum_{k\in L\cup R} x_kv_k}{\sum_{k\in L\cup R}v_k^2},\quad
y_i^{LR}(x):=\begin{cases}
v_i\Sigma_{LR}(x),&\text{if }i\in L\cup R,\\
x_i,&\text{else.}
\end{cases}
\end{equation}

\begin{lem}\label{lem_projection_in_P}
Fix $x\in\mathbb{R}^n$.
If $(L,R)=(\bar{L}(x),\bar{R}(x))$, then $y^{L,R}(x)\in P$.
Moreover, if $x$ satisfies no constraint of $P$ with equality, the vector $y^{\bar{L},\bar{R}}(x)$ satisfies the inequalities with indices in $\bar{L}(x)\times \bar{R}(x)$ with equality, and all others with strict inequality.
\end{lem}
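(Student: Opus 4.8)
The plan is to verify the defining inequalities of $P$ from Theorem \ref{thm_description} one pair $(l,r)\in\mathcal{I}_N$ at a time, after recording a single structural fact about the averaged value $\Sigma:=\Sigma_{\bar L\bar R}(x)$. Throughout I abbreviate $\bar L:=\bar L(x)$, $\bar R:=\bar R(x)$ and write $c_i:=x_i/v_i$ for the ratios appearing in the description of $P$. The first observation is that, since $v_k\neq 0$,
\[
\Sigma=\frac{\sum_{k\in\bar L\cup\bar R}x_kv_k}{\sum_{k\in\bar L\cup\bar R}v_k^2}=\frac{\sum_{k\in\bar L\cup\bar R}c_kv_k^2}{\sum_{k\in\bar L\cup\bar R}v_k^2}
\]
is a convex combination of the ratios $c_k$, $k\in\bar L\cup\bar R$, with positive weights $v_k^2$; in particular it lies between the smallest and largest such ratio. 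After applying the map, the transformed ratios are $y^{\bar L\bar R}_i(x)/v_i=\Sigma$ for $i\in\bar L\cup\bar R$ and $=c_i$ otherwise, so every constraint reduces to a comparison of two numbers, each of which is either $\Sigma$ or some $c_i$.

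Next I would rewrite the definitions \eqref{eq_LR_bar_def} as threshold conditions. Setting $M:=\max_{r>N}c_r$ and $m:=\min_{l\le N}c_l$, the condition ``$\exists\,r$ with $c_l<c_r$'' is exactly $c_l<M$, and ``$\exists\,l$ with $c_l<c_r$'' is exactly $c_r>m$. Hence
\[
\bar L=\{l\le N:c_l<M\},\qquad \bar R=\{r>N:c_r>m\},
\]
so $l\notin\bar L$ means $c_l\ge M$ and $r\notin\bar R$ means $c_r\le m$. From this I extract the two squeezing bounds that drive the whole argument. If $l\notin\bar L$, then $c_l\ge M$, while every ratio contributing to $\Sigma$ satisfies $c_{l'}<M\le c_l$ (for $l'\in\bar L$) or $c_{r'}\le M\le c_l$ (for $r'\in\bar R$, since any right ratio is at most $M$); being a convex combination of these, $\Sigma\le c_l$. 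Symmetrically, if $r\notin\bar R$, then $c_r\le m$ and every contributing ratio satisfies $c_{l'}\ge m\ge c_r$ or $c_{r'}>m\ge c_r$, whence $\Sigma\ge c_r$.

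With these two inequalities in hand the membership $y^{\bar L\bar R}(x)\in P$ follows by a four-way case split on $(l,r)\in\mathcal{I}_N$: if $l\in\bar L,r\in\bar R$ both sides equal $\Sigma$; if $l\in\bar L,r\notin\bar R$ the constraint reads $\Sigma\ge c_r$, the second bound; if $l\notin\bar L,r\in\bar R$ it reads $c_l\ge\Sigma$, the first bound; and if $l\notin\bar L,r\notin\bar R$ it reads $c_l\ge c_r$, which holds because $c_l\ge M\ge c_r$. The degenerate case $\bar L\cup\bar R=\emptyset$ (where $y=x$ and nothing is averaged) occurs precisely when $m\ge M$, i.e.\ when $x\in P$ already, so it is consistent. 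For the ``moreover'' statement I would upgrade each inequality to be strict under the hypothesis $c_l\neq c_r$ for all $(l,r)\in\mathcal{I}_N$. This forces the extremal index: $M$ is attained at some right index $r^\ast$, so $c_l\ge M$ together with $c_l\neq c_{r^\ast}=M$ gives $c_l>M$, hence all contributions to $\Sigma$ are strictly below $c_l$ and $\Sigma<c_l$; dually $c_r<m$ and $\Sigma>c_r$. The pair $l\notin\bar L,r\notin\bar R$ is strict because $c_l>M\ge c_r$. Only the pairs in $\bar L\times\bar R$ remain, and there both sides are exactly $\Sigma$, giving the asserted equalities.

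The step I expect to be the main obstacle is keeping the asymmetry of the two thresholds straight: $\bar L$ is governed by the \emph{maximum} right ratio $M$ whereas $\bar R$ is governed by the \emph{minimum} left ratio $m$, and it is this mismatch (rather than a symmetric comparison of $m$ against $M$) that makes the two squeezing bounds point in the correct opposite directions. Once the threshold reformulation of \eqref{eq_LR_bar_def} is pinned down, the remaining verifications are elementary comparisons, and the strictness claims require only the mild observation that an outside index cannot tie with the extremal index realizing $M$ or $m$.
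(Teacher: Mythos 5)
Your proposal is correct and follows essentially the same route as the paper's proof: the same four-way case split on membership of $(l,r)$ in $\bar L(x)\times\bar R(x)$ versus its complements, with $\Sigma_{\bar L\bar R}(x)$ bounded as a weighted average of the ratios $x_k/v_k$. Your reformulation of \eqref{eq_LR_bar_def} via the thresholds $M=\max_{r>N}x_r/v_r$ and $m=\min_{l\le N}x_l/v_l$ is only a cleaner packaging of the extremal-index observation the paper already uses in the $L^c\times R$ case.
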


\begin{proof}
Fix any $x\in\mathbb{R}^n$.
To simplify notation, throughout this proof we write $\Sigma_{LR}:=\Sigma_{LR}(x)$ and $y:=y^{LR}(x)$, as well as $(L,R)=(\bar{L}(x),\bar{R}(x))$.
As in the proof of Theorem \ref{thm_description}, for all $(l,r)\in\mathcal{I}_N$ we use the notation $a^{lr}:=-e_l/|v_l|-e_r/|v_r|$.
We define $L^c:=\{1,\ldots,N\}\setminus L$ and $R^c:=\{N+1,\ldots, n\}\setminus R$.
To verify that $y$ satisfies all inequalities in the description of $P$, we have to check four cases, namely $(l,r)$ in $L^c\times R^c$, $L^c\times R$, $L\times R^c$ or $L\times R$.
If $(l,r)\in L^c\times R^c$ then:
\[
\langle y,a^{lr}\rangle = \langle x,a^{lr}\rangle \leq 0.
\]
Moreover, the inequality is strict if $x$ satisfies no constraint of $P$ with equality.
If $(l,r)\in L\times R$, then:
\[
\langle y,a^{lr}\rangle= -\Sigma_{LR}+\Sigma_{LR}=0.
\]
We check the case $(l,r)\in L^c\times R$.
Since $l\in L^c$, for all $j\in \{N+1,\ldots,n\}$ it holds that $x_l/v_l\geq x_j/v_j$.
On the other hand, since $R$ is not empty in this case, there exists an $r\in R$, such that for all $i\in L$, it holds that $x_i/v_i< x_r/v_r$.
In particular, for all $i\in L$, $x_i/v_i< x_r/v_r\leq x_l/v_l$.
We conclude that:
\[
\frac{y_r}{v_r}=\frac{\sum_{k\in L\cup R} \frac{x_k}{v_k}v_k^2}{\sum_{k\in L\cup R}v_k^2}
< \frac{x_l}{v_l}\frac{\sum_{k\in L\cup R} v_k^2}{\sum_{k\in L\cup R}v_k^2}=\frac{y_l}{v_l}.
\]
We conclude that any inequality in the description of $P$ indexed by $(l,r)\in L^c\times R$ is met by $y$ and strict.
The case $L\times R^c$ is analogous.
The proof is complete.
\end{proof}

Using $y^{L,R}(x)$ as candidate for the projection if $(L,R)=(\bar{L}(x),\bar{R}(x))$ is the naive approach described in Example \ref{exa_projection}.
Example \ref{exa_projection_success} shows that this approach can succeed, and Example \ref{exa_projection_fail} shows that it may fail. 

\begin{exa}\label{exa_projection_success}
Let $n=4$ and $N=2$, with $(x_1,x_2,x_3,x_4)=(1,2,-3,-4)$ and $(v_1,v_2,v_3,v_4)=(1,1,-1,-1)$.
In this case, $\bar{L}(x)=\{1,2\}$ and $\bar{R}(x)=\{3,4\}$, and if we choose $(L,R)=(\bar{L},\bar{R})$, we calculate that $(y_1,y_2,y_3,y_4)=(2.5,2.5,-2.5,-2.5)$.
By Proposition \ref{prop_projection}, $y$ is the projection of $x$ onto $P$.
\end{exa}

\begin{exa}\label{exa_projection_fail}
Let $n=4$ and $N=2$, with $(x_1,x_2,x_3,x_4)=(1,3,-2,-4)$ and $(v_1,v_2,v_3,v_4)=(1,1,-1,-1)$.
Note that as in Example \ref{exa_projection_success}, $\bar{L}(x)=\{1,2\}$ and $\bar{R}(x)=\{3,4\}$ and $(y_1,y_2,y_3,y_4)=(2.5,2.5,-2.5,-2.5)$ if we choose $(L,R)=(\bar{L},\bar{R})$.
Consequentially, it holds that $x-y=(0.5,-1.5,0.5,-1.5)$.
Since this is not an element of $(-\infty,0]^n$, $y$ is not the projection of $x$ onto $P$ by Proposition \ref{prop_projection}.
\end{exa}

\subsubsection*{Algorithm for construction of the projection}
Here, we provide an algorithm that constructs the projection of a vector $x\in\mathbb{R}^n$ onto the polyhedral cone $P:=\spa\{v\}\oplus[0,\infty)^n$.
We will assume that the indices of the fractions $x_j/v_j$ are such that the expressions are in increasing order within $\{1,\ldots, N\}$ and in decreasing order in $\{N+1,\ldots,n\}$:
\begin{equation}\label{eq_increasing_decreasing}
\frac{x_1}{v_1}<\frac{x_2}{v_2}<\ldots,\frac{x_N}{v_N},\quad
\frac{x_{n}}{v_{n}}<\frac{x_{n-1}}{v_{n-1}}<\ldots,\frac{x_{N+1}}{v_{N+1}}
\end{equation}
Our discussion here is entirely deterministic, so this re-indexing occurs without loss of generality.

Algorithm \ref{alg_2} constructs the projection $y$ of $x$ onto $P$.
Some intuition is given in Example \ref{exa_projection}.
Throughout the algorithm, we increase the index sets $L\subset\{1,\ldots,N\}$ and $R\subset\{N+1,\ldots,n\}$.
The sets $\bar{L}:=\bar{L}(x)$ and $\bar{R}:=\bar{R}(x)$ are defined as in \eqref{eq_LR_bar_def}.
For an $x\in\mathbb{R}$ and a pair $(L,R)\subset\mathcal{I}_N$, the vector $y^{LR}:=y^{LR}(x)$ and $\Sigma_{LR}:=\Sigma_{LR}(x)$ are defined as in \eqref{eq_defn_projection}.
The vector $y\in\mathbb{R}^n$ is updated throughout the algorithm, and eventually equal to the projection of $x$ onto $P$.
This claim is later proved in Theorem \ref{thm_alg_2_correct}.
We provide the proof in Appendix \ref{app_proof_correct}.

\begin{algorithm}[tb]
    \caption{Projection of $x$ onto $P$.}\label{alg_2}
    
\begin{algorithmic}[htb]
\STATE{\textbf{Input} $x\in\mathbb{R}^n$, $v_1,\ldots,v_N>0$, $v_{N+1},\ldots,v_n<0$, satisfying \eqref{eq_increasing_decreasing}.}
\STATE{    
    \textbf{Output} $y\in P$, the projection of $x$ onto $P$.}
\STATE{ $y\leftarrow x$.}
\STATE{$L\leftarrow\{1\}$, $R\leftarrow\{N+1\}$}
    \WHILE{$y\not\in P$}
        \STATE{ $y\leftarrow y^{LR}$.}
        \IF{$|L|< |\bar{L}|$ and $x_{|L|+1}/v_{|L|+1}<\Sigma_{LR}$}
            \STATE{$L\leftarrow L\cup\{|L|+1\}$.}
        \ENDIF
        \IF{$|R|<|\bar{R}|$ and $x_{|R|+1}/v_{|R|+1}>\Sigma_{LR}$}
            \STATE{$R\leftarrow R\cup\{|R|+1\}$.}
        \ENDIF
    \ENDWHILE
\end{algorithmic}
\end{algorithm}

\subsection{Intrinsic volumes}\label{sec_intrinsic}
We have found an inequality description for polyhedral cones of the form $P:=\spa\{v\}\oplus [0,\infty)^n$ (Theorem \ref{thm_description}), and know how to construct projections onto $P$ (Algorithm \ref{alg_2}).
We proceed to calculate their intrinsic volumes.
That is, for $k\in\{0,\ldots,n\}$, we calculate the probability that a standard Gaussian vector $g$ in $\mathbb{R}^n$, when projected onto $P$, lands on a face of dimension $k$ (more precisely, the relative interior of a face of dimension $k$):
\[
\nu_k(P):=\mathbb{P}\left[\bigcup_{F\in\mathcal{F}_k(P)}\Pi_P(g)\in\relint F\right].
\]
We emphasize that we treat $v$ as fixed here and not a random variable.
Using the inequality description in Theorem \ref{thm_description} and the construction of the projection in Algorithm \ref{alg_2}, we can give a formula for the intrinsic volumes.

\begin{thm}\label{thm_intrinsic}
Fix $v\in(\mathbb{R}\setminus\{0\})^p$, such that not all entries are positive or negative.
Define $P:=\spa\{v\}\oplus[0,\infty)^n$ and:
\[
\mathcal{I}:=\{l\in[n]:v_l>0\}\times \{r\in[n]:v_r<0\}.
\]
Letting $g\sim\mathcal{N}(0,1)$, $i:=\sqrt{-1}$, as well as $\|v\|_{LR}^2:=\sum_{j\in L\cup R}v_j^2$, it holds that for all $k\in\{1,\ldots,n-1\}$,
\[
\nu_{n-k}(P)
=\sum_{\substack{(L,R)\subset\mathcal{I}\\|L|+|R|=k+1}}
\mathbb{E}\left[\prod_{j\in L\cup R}\Phi\left(\frac{igv_j}{\|v\|_{LR}}\right)\right]\mathbb{E}\left[\prod_{j\in[n]\setminus(L\cup R)}\Phi\left(\frac{gv_j}{\|v\|_{LR}}\right)\right].
\]
Moreover,
\[
\nu_0(P)=0,\quad \nu_n(P)=1-\sum_{k=0}^{n-1}\nu_{k}(P).
\]
\end{thm}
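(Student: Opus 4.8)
The plan is to compute the contribution $\nu_F(P):=\mathbb{P}[\Pi_P(g)\in\relint F]$ of each face separately and then sum over faces of a fixed dimension, as in Definition \ref{defn_intrinsic}. First I would set up a correspondence between the proper faces of $P$ and the pairs $(L,R)$ with $\emptyset\neq L\subseteq\{l:v_l>0\}$ and $\emptyset\neq R\subseteq\{r:v_r<0\}$. By Theorem \ref{thm_description} the active constraints on a face form a complete bipartite set $L\times R$, so the associated face is $F_{LR}:=\{x:x_i/v_i\text{ constant over }i\in L\cup R\}\cap P$. Writing $S:=L\cup R$, forcing the $|S|$ ratios to agree costs $|S|-1$ independent linear equations, whence $\dim F_{LR}=n-|S|+1=n-(|L|+|R|)+1$; thus the faces of dimension $n-k$ are exactly those with $|L|+|R|=k+1$, matching the index set of the sum, and note $\|v_S\|=\|v\|_{LR}$. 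The lineality space $\spa\{v\}$ is the unique minimal face (dimension $1$), so there are no $0$-faces and $\nu_0(P)=0$, while $\nu_n(P)=\mathbb{P}[g\in P]$ is pinned down by $\sum_k\nu_k(P)=1$.

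Next I would invoke the orthogonal splitting at a face. For any $w$ in the normal cone $N_P(F)$ and any direction $u\in\spa F$ one has $\langle w,u\rangle=0$, so $N_P(F)\subseteq(\spa F)^\perp$; combined with the projection characterisation of Proposition \ref{prop_projection} and the explicit projection of Algorithm \ref{alg_2} (Theorem \ref{thm_alg_2_correct}), this gives $\Pi_P(g)\in\relint F\iff g\in\relint F\oplus N_P(F)$. Decomposing $g=g_1+g_2$ into its independent components in $\spa F$ and $(\spa F)^\perp$ turns this into the product
\[
\nu_{F_{LR}}(P)=\mathbb{P}[g_1\in\relint F_{LR}]\cdot\mathbb{P}[g_2\in N_P(F_{LR})].
\]
Here $\spa F_{LR}=\{x:x_S\in\spa v_S,\ x_{S^c}\text{ free}\}$, so $g_1$ has $S$-part $\Sigma\,v_S$ with $\Sigma:=\langle g_S,v_S\rangle/\|v_S\|^2$ and $S^c$-part $g_{S^c}$, while $g_2=g_S-\Sigma v_S$ lies in $v_S^\perp$.

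For the first factor, $\relint F_{LR}$ is cut out by requiring every non-active constraint to be strict; a short case check (as in Lemma \ref{lem_projection_in_P}) shows this is equivalent to $g_j>\Sigma v_j$ for all $j\in S^c$. Conditioning on $\Sigma$, which is $\mathcal{N}(0,\|v_S\|^{-2})$ and independent of $g_2$, and using $g\sim-g$, gives exactly $\mathbb{E}_{g\sim\mathcal{N}(0,1)}[\prod_{j\in S^c}\Phi(gv_j/\|v\|_{LR})]$. For the second factor I would identify $N_P(F_{LR})$ with the polar of the restricted cone $\spa\{v_S\}\oplus[0,\infty)^S$: applying Lemma \ref{lem_polar} and Theorem \ref{thm_description} to the coordinates in $S$ shows $N_P(F_{LR})=(-\infty,0]^S\cap v_S^\perp=\operatorname{pos}\{a^{lr}:(l,r)\in L\times R\}$. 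Hence the event $g_2\in N_P(F_{LR})$ reads $\{g_i\leq\Sigma v_i\ \forall i\in S\}$, i.e. the weighted mean $\Sigma$ exceeds the $|L|$ ratios on the left and is exceeded by the $|R|$ ratios on the right, which is a weighted instance of Youden's demon problem. Its closed form $\mathbb{E}_{g\sim\mathcal{N}(0,1)}[\prod_{j\in S}\Phi(igv_j/\|v\|_{LR})]$ then follows from Theorem \ref{cor_youden_general}.

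Assembling the two factors over all $(L,R)$ with $|L|+|R|=k+1$ produces the stated formula for $\nu_{n-k}(P)$, and the two boundary identities follow as above. The hard part will be the second factor: establishing that the normal-cone probability equals the complex Gaussian expression hinges on the Youden/absorption identity of Theorem \ref{cor_youden_general}, and some care is needed to confirm that the active-constraint set of each face is precisely a combinatorial rectangle $L\times R$, so that the face-to-$(L,R)$ correspondence, and hence the dimension bookkeeping, is exact.
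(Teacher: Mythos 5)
Your proposal is correct and follows essentially the same route as the paper: index the faces by pairs $(L,R)$ via Theorem \ref{thm_description}, characterize the projection event through Proposition \ref{prop_projection} and Algorithm \ref{alg_2}, split it into two independent groups of inequalities, and evaluate the first group by conditioning on $\Sigma_{LR}$ and the second via the weighted Youden identity of Corollary \ref{cor_youden_general}. The only cosmetic difference is that you obtain the independence of the two factors from the orthogonal decomposition of $g$ along $\spa F_{LR}$ and its complement, whereas the paper verifies it by a direct covariance computation; both yield the same factorization.
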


Theorem \ref{thm_intrinsic} uses the analytic continuation of the Gaussian cumulative distribution function:
\[
\Phi:\mathbb{C}\rightarrow\mathbb{C},\quad z\mapsto \frac{1}{2}+\frac{1}{\sqrt{2\pi}}\int_0^ze^{-\frac{t^2}{2}}dt.
\]
Complex numbers are not necessary to express the intrinsic volumes here, one could write the formula with real expressions only (see Remark \ref{rem_real}).
We provide the complex formulation here, as it is more compact. 
Finally, we remark that if all entries of $v$ are strictly positive or negative, then $\spa\{v\}\oplus[0,\infty)^n=\mathbb{R}^n$, and so the $k$-th intrinsic volume is $1$ if $k=n$ and $0$ else.

\begin{exa}
It is instructive to keep the special case in mind, where $v\in\{-1,+1\}^n$.
Defining the number of positive elements $N_L:=\#\{j\in[n]:v_j>0\}$, and the number of negative elements $N_R:=\#\{r\in [n]:v_j<0\}$, and using the abbreviation $\tilde{\Phi}(z):=\Phi(z/\sqrt{k+1})$, the $n-k$-th intrinsic volume reads:
\[
\sum_{l=1}^{k}{N_L\choose l}{N_R\choose r}
\mathbb{E}\left[\tilde{\Phi}\left(ig\right)^l\tilde{\Phi}\left(-ig\right)^{r}
\right]\mathbb{E}\left[\tilde{\Phi}\left(-g\right)^{N_L-l}\tilde{\Phi}\left(g\right)^{N_R-r}\right].
\]
\end{exa}

\begin{rem}\label{rem_combinatorics}
Upon inspection of the proof of Theorem \ref{thm_intrinsic}, we see another characterization of the intrinsic volumes of $P$ that may be instructive.
With the notation of Theorem \ref{thm_intrinsic} and a Gaussian $g\sim\mathcal{N}(0,I_n)$,
\[
\nu_{n-k}(P)=
\sum_{\substack{(L,R)\subset\mathcal{I}\\|L|+|R|=k+1}}\mathbb{P}\left[
\max_{j\in L^c\cup R}\frac{g_j}{v_j}\geq 
\Sigma_{LR}(g)\geq
\max_{k\in L\cup R^c}\frac{g_j}{v_j}
\right].
\]
\end{rem}

\begin{proof}[Proof of Theorem \ref{thm_intrinsic}]
We start with the observation that $\nu_0(P)=0$. 
To see why, note that every point $x\in P$ lies in the linear subspace $x+\spa\{v\}\subset P$.
Therefore, $P$ has no vertices.
The $n$-th intrinsic volume follows, once the cases $k\in\{1,\ldots,n-1\}$ are known.
We use the notation $P:=\spa\{v\}\oplus [0,\infty)^n$.
We know by the correctness of Algorithm \ref{alg_2} (see Theorem \ref{thm_alg_2_correct}), that when the vector $g$ is projected onto $P$, there exists an index pair $(L,R)\subset\mathcal{I}_N$, such that the projection $\Pi_P(g)$ is of the form:
\[
\Sigma_{LR}(g):=\frac{\sum_{k\in L\cup R} g_kv_k}{\sum_{k\in L\cup R}v_k^2},\quad
y_j^{LR}(g):=\begin{cases}
v_j\Sigma_{LR}(g),&\text{if }j\in L\cup R,\\
g_j,&\text{else.}
\end{cases}
\]
From the inequality description in Theorem \ref{thm_description}, we know that the faces of $P$ can be indexed by such index pairs $(L, R)\subset\mathcal{I}_N$ as well.
When all constraints corresponding to a pair $(L, R)\subset\mathcal{I}_N$ are satisfied with equality, for any $j,q\in L\cup R$ it holds that $g_j/v_j=g_q/v_q$.
So, the face corresponding to the pair $(L,R)$ is a $n-(|L|+|R|-1)$ dimensional face.

Necessary and sufficient conditions for $y^{LR}(g)$ to be the projection of $g$ onto $P$ are given in Proposition \ref{prop_projection}.
Let $L^c:=\{l\in[n]:v_l>0\}\setminus L$ and define $R^c:=\{r\in[n]:v_r<0\}\setminus R$.
The first condition is $y^{LR}(g)\in P$, which holds if and only if:
\[
\bigcap_{l\in L^c}\frac{g_l}{v_l}\geq \Sigma_{LR}(g),\quad \bigcap_{r\in R^c} \Sigma_{LR}(g)\geq  \frac{g_r}{v_r}.
\]
The second condition requires $g-y^{LR}(g)\in \ker\{v\}\cap (-\infty,0]^n $, the third requires that $g-y^{LR}(g)\in \ker\{y\}$.
That $g-y^{LR}(g)$ is orthogonal to $v$ and $y$ follows from Lemma \ref{lem_projection_kernels}.
The vector $g-y^{LR}(g)$ is negative if and only if:
\[
\bigcap_{r\in R}\frac{g_r}{v_r}\geq \Sigma_{LR}(g),
\quad
\bigcap_{l\in L} \Sigma_{LR}(g)\geq \frac{g_l}{v_l} .
\]
So, we find that:
\[
\mathbb{P}\left[\Pi_P(g)=y^{LR}(g)\right]
=\mathbb{P}\left[
\bigcap_{\substack{{j\in L^c\cup R}\\{k\in L\cup R^c}}}\frac{g_j}{v_j}\geq \Sigma_{LR}\geq \frac{g_k}{v_k}
\right].
\]
The events (inequalities) indexed by $L\cup R$ are independent of the events indexed by $L^c\cup R^c$, since the $g_i$ follow a Gaussian distribution.
To see why, take $j\in L\cup R$ and $k\in L^c\cup R^c$, and compute:
\[
\mathbb{E}\left[
\left(\frac{g_j}{v_j}-\Sigma_{LR}(g)\right)
\left(\frac{g_k}{v_k}-\Sigma_{LR}(g)\right)
\right]
=-\mathbb{E}\left[\frac{g_j}{v_j}\Sigma_{LR}(g)-\Sigma_{LR}^2(g)
\right]=0.
\]
It follows that:
\[
\mathbb{P}\left[
\bigcap_{\substack{{j\in L^c\cup R}\\{k\in L\cup R^c}}}\frac{g_j}{v_j}\geq \Sigma_{LR}\geq \frac{g_k}{v_k}
\right]
\]
\[
=
\mathbb{P}\left[
\bigcap_{(l,r)\in L^c\times R^c}\frac{g_l}{v_l}\leq \Sigma_{LR}(g)\leq \frac{g_r}{v_r}
\right]
\mathbb{P}\left[
\bigcap_{(l,r)\in L\times R}\frac{g_r}{v_r}\geq \Sigma_{LR}(g)\geq \frac{g_l}{v_l}
\right].
\]
The first term is easier to calculate: as the index sets do not overlap, $\Sigma_{LR}(g)$ is independent of the $g_j/v_j$.
Note that $\Sigma_{LR}(g)$ follows a normal distribution with variance $1/\|v\|^2_{LR}$.
Flipping the signs gives the expression in the theorem.
The second term takes more work. 
Using Corollary \ref{cor_youden_general}, we find:
\[
\mathbb{E}\left[\prod_{l\in L^c}\Phi\left(\frac{-g_1|v_l|}{\|v\|_{LR}}\right)\prod_{r\in R^c}\Phi\left(\frac{g_1|v_r|}{\|v\|_{LR}}\right)\right]
\mathbb{E}\left[\prod_{l\in L}\Phi\left(\frac{ig_1|v_l|}{\|v\|_{LR}}\right)\prod_{r\in R}\Phi\left(\frac{-ig_1|v_r|}{\|v\|_{LR}}\right)\right].
\]
The proof is complete.
\end{proof}

\section*{Acknowledgments} 
The author is grateful for several helpful discussions and comments, in particular, to
Sara van de Geer, 
Satoshi Hayakawa,
Silvan Vollmer,
Guillaume Aubrun,
Michael Law,
Malte Londschien,
Cyrill Scheidegger,
Maybritt Schillinger and 
Christoph Schultheiss.

\bibliographystyle{plainnat}
\bibliography{bibliography}
\newpage

\begin{appendix}

\section{Definitions of linear separability}\label{app_defn}
We introduced the definition of linear separability we use in Section \ref{sec_int_models}.
In this section, we show some other definitions that exist in the literature and show that for our purposes, they are equivalent to the definition we provided in \eqref{eq_intro_sign}.

The discussion in this section is entirely deterministic.
So, we do not distinguish between the case with and without intercept.
The intercept can be symbolically removed from the definitions, using the standard trick: redefining $\tilde{x}_j:=(1,x_j^T)^T$ and $\tilde{\beta}:=(\beta_0,\beta^T)^T$, or $\tilde{x}_j:=(x_j^T,1)^T$ and $\tilde{\beta}:=(\beta^T,\beta_0)^T$, if preferred.
Moreover, we will use the notation $z_j:=y_jx_j$.

The definition that stands out the most is possibly \eqref{eq_separation_minkowski}.
It stems from the characterization in Proposition \ref{prop_candes}, due to \cite{candes2020phase}.
Here, we formulate it with our setting in mind, where we either have a signal but not intercept, so for $\beta^*=e_1$, $y_jx_j\sim y_jx_{j,1}+x_{j,2:p}$, or without signal but with intercept, $y_j(1,x_j^T)^T\sim y_j+x_{j}$.
To include both an intercept and signal, one could state the condition \eqref{eq_separation_minkowski} with $\spa\{z_{\cdot,1}\}\oplus \spa\{z_{\cdot,2}\}\oplus[0,\infty)^n$, and prove a similar equivalence.

\begin{defn}
We say that the data are \textit{completely separable} if there exists a $\beta\in\mathbb{R}^p$, such that for all $i\in[n]$, $z_i^T\beta>0$.
Symbolically:
\begin{equation}\label{eq_separation_strong}
\exists \beta\in \mathbb{R}^p,\forall i\in[n],z_i^T\beta>0
\end{equation}
We say that the data are \textit{weakly separable} if there exists a $\beta\in\mathbb{R}^p$, such that for all $i\in[n]$, $z_i^T\beta\geq 0$, and at least one inequality is strict.
Formally:
\begin{equation}\label{eq_separation_weak}
\exists \beta\in \mathbb{R}^p,\left(\forall i\in[n],z_i^T\beta\geq 0\right)\wedge \left(\exists i\in[n], z_i^T\beta\neq 0\right)
\end{equation}
We say that the data are \textit{non-trivially separable} if there exists a non-zero $\beta\in\mathbb{R}^p$, such that for all $i\in[n]$, $z_i^T\beta\geq 0$.
In other words:
\begin{equation}\label{eq_separation_nonzero}
\exists \beta\in \mathbb{R}^p\setminus\{0\},\forall i\in[n],z_i^T\beta\geq 0
\end{equation}
We say that the data are \textit{Candès-Sur separable}, if:
\begin{equation}\label{eq_separation_minkowski}
\spa\{z_{\cdot,2},\ldots z_{\cdot,p}\}\cap \left(\spa\{z_{\cdot,1}\}\oplus[0,\infty)^n\right)\neq \{0\}.
\end{equation}
\end{defn}

Some authors also use the definition \textit{quasi-complete separation}, which is the event \eqref{eq_separation_weak}$\setminus$\eqref{eq_separation_strong}, see  \cite{albert1984existence}.
Proposition \ref{prop_equivalence} describes in what sense they are equivalent.
Under no additional assumptions, the following implications hold:
\[
\eqref{eq_separation_strong}\Rightarrow\eqref{eq_separation_weak}\Rightarrow \eqref{eq_separation_nonzero}\Leftarrow\eqref{eq_separation_minkowski}
\]
Remark \ref{rem_equivalence} below shows that for our purposes, all definitions are equivalent.

\begin{prop}\label{prop_equivalence}
\eqref{eq_separation_strong} implies \eqref{eq_separation_weak}, \eqref{eq_separation_weak} implies \eqref{eq_separation_nonzero}.
Neither implication is an equivalence in general.
If the vectors $z_1,\ldots,z_n$ span $\mathbb{R}^p$, then \eqref{eq_separation_nonzero} implies \eqref{eq_separation_weak}.
If any $p$ vectors of $z_1,\ldots,z_n$ are linearly independent, then \eqref{eq_separation_weak} implies \eqref{eq_separation_strong}.

Moreover, \eqref{eq_separation_strong} does not imply \eqref{eq_separation_minkowski}.
\eqref{eq_separation_minkowski} does not imply \eqref{eq_separation_weak}.
\eqref{eq_separation_minkowski} implies \eqref{eq_separation_nonzero}.
If $\spa\{z_{\cdot,2},\ldots z_{\cdot,p}\}\neq \{0\}$, then \eqref{eq_separation_strong} implies \eqref{eq_separation_minkowski}.
\end{prop}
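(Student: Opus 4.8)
The plan is to read the proposition as a collection of small independent claims and to reformulate everything through the matrix $Z\in\mathbb{R}^{n\times p}$ whose $i$-th row is $z_i^T$, so that $Z\beta=(z_1^T\beta,\dots,z_n^T\beta)^T=\sum_{k=1}^p\beta_k z_{\cdot,k}$. In this language \eqref{eq_separation_strong}, \eqref{eq_separation_weak} and \eqref{eq_separation_nonzero} assert, respectively, the existence of $\beta$ with $Z\beta\in(0,\infty)^n$, of $\beta$ with $Z\beta\in[0,\infty)^n\setminus\{0\}$, and of $\beta\neq 0$ with $Z\beta\in[0,\infty)^n$. The chain \eqref{eq_separation_strong}$\Rightarrow$\eqref{eq_separation_weak}$\Rightarrow$\eqref{eq_separation_nonzero} is then immediate, the last step only using that $Z\beta\neq 0$ forces $\beta\neq 0$. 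For the converse under the spanning hypothesis, I would argue that if $\beta\neq 0$ satisfies $Z\beta\geq 0$ but $Z\beta=0$, then $\beta$ is orthogonal to every $z_i$, hence to $\spa\{z_1,\dots,z_n\}=\mathbb{R}^p$, so $\beta=0$, a contradiction; therefore some coordinate of $Z\beta$ is positive and \eqref{eq_separation_weak} holds.

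The main obstacle is the implication \eqref{eq_separation_weak}$\Rightarrow$\eqref{eq_separation_strong} under the general-position hypothesis. Given a weak separator $\beta$, I would partition the indices into $I_0:=\{i:z_i^T\beta=0\}$ and $I_+:=\{i:z_i^T\beta>0\}$, with $I_+\neq\emptyset$ since \eqref{eq_separation_weak} rules out $Z\beta=0$. The crucial step is a dimension count: all the $z_i$ with $i\in I_0$ lie in the hyperplane $\beta^\perp$, which has dimension $p-1$, so by general position $|I_0|\leq p-1$ and $\{z_i\}_{i\in I_0}$ is linearly independent. Linear independence makes the linear map $d\mapsto(z_i^Td)_{i\in I_0}$ surjective, so I can choose a direction $d$ with $z_i^Td>0$ for every $i\in I_0$. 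Taking $\beta'=\beta+\varepsilon d$ yields $z_i^T\beta'=\varepsilon\,z_i^Td>0$ on $I_0$, while on $I_+$ one has $z_i^T\beta'=z_i^T\beta+\varepsilon\,z_i^Td>0$ for all sufficiently small $\varepsilon>0$; this gives \eqref{eq_separation_strong}. I expect the dimension count and the uniform choice of a single $\varepsilon$ valid on both index sets to be the delicate points.

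For the two statements involving \eqref{eq_separation_minkowski} I would unwind the Minkowski-sum definition algebraically. A nonzero element $w$ of $\spa\{z_{\cdot,2},\dots,z_{\cdot,p}\}\cap(\spa\{z_{\cdot,1}\}\oplus[0,\infty)^n)$ can be written simultaneously as $w=\sum_{k\geq 2}\beta_k z_{\cdot,k}$ and as $w=\lambda z_{\cdot,1}+u$ with $u\geq 0$; setting $\beta_1:=-\lambda$ gives $Z\beta=u\geq 0$, and $\beta=0$ would force $\lambda=0$ and hence $w=0$, so $\beta\neq 0$ and \eqref{eq_separation_nonzero} follows. For the converse under $\spa\{z_{\cdot,2},\dots,z_{\cdot,p}\}\neq\{0\}$, starting from a complete separator I would set $u:=Z\beta>0$ and observe that $w:=\sum_{k\geq 2}\beta_k z_{\cdot,k}=u-\beta_1 z_{\cdot,1}$ already lies in both sets; to ensure it is nonzero I would fix any nonzero $w'\in\spa\{z_{\cdot,2},\dots,z_{\cdot,p}\}$ and pass to $w+tw'$, which stays in the intersection because $u+tw'>0$ for small $|t|$ and is nonzero for all but one value of $t$. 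This perturbation is precisely where the hypothesis $\spa\{z_{\cdot,2},\dots,z_{\cdot,p}\}\neq\{0\}$ is used.

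Finally I would record explicit minimal counterexamples for the non-implications. For ``\eqref{eq_separation_weak} does not imply \eqref{eq_separation_strong}'' I would take $z_1=(1,0)$, $z_2=(-1,0)$, $z_3=(0,1)$ in $\mathbb{R}^2$, where $\beta=(0,1)$ gives $Z\beta=(0,0,1)$ yet $z_1,z_2$ preclude any strict separator. For ``\eqref{eq_separation_nonzero} does not imply \eqref{eq_separation_weak}'' the pair $z_1=(1,0)$, $z_2=(-1,0)$ suffices, since $Z\beta\geq 0$ forces $\beta_1=0$ and hence $Z\beta=0$, while $\beta=(0,1)\neq 0$ witnesses \eqref{eq_separation_nonzero}. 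For ``\eqref{eq_separation_strong} does not imply \eqref{eq_separation_minkowski}'' I would take $p=1$, where $\spa\{z_{\cdot,2},\dots,z_{\cdot,p}\}=\{0\}$ makes the intersection trivial although $z_1=1$ is completely separated. For ``\eqref{eq_separation_minkowski} does not imply \eqref{eq_separation_weak}'' I would use $z_1=(1,1)$, $z_2=(-1,-1)$ in $\mathbb{R}^2$: the equal columns $z_{\cdot,1}=z_{\cdot,2}=(1,-1)$ place $(1,-1)\neq 0$ in the intersection, while $Z\beta=(\beta_1+\beta_2)(1,-1)\geq 0$ again forces $Z\beta=0$. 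These checks should be routine once the reformulation through $Z$ is fixed.
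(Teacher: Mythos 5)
Your proposal is correct and establishes every claim; apart from one step it follows essentially the same route as the paper, with the same (or equivalent) counterexamples --- for \eqref{eq_separation_strong}$\not\Rightarrow$\eqref{eq_separation_minkowski} the paper takes $z_{\cdot,2},\ldots,z_{\cdot,p}=0$ with $p\geq 2$ rather than your degenerate $p=1$, which avoids having to interpret the span of an empty list, but both work. The genuine divergence is the implication \eqref{eq_separation_weak}$\Rightarrow$\eqref{eq_separation_strong} under general position. You argue directly: the tight index set $I_0$ consists of vectors lying in the hyperplane $\beta^\perp$, so general position forces $|I_0|\leq p-1$ and linear independence, the map $d\mapsto(z_i^Td)_{i\in I_0}$ is surjective, and a small perturbation $\beta+\varepsilon d$ makes every constraint strict. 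The paper instead argues by contradiction: if some constraint can never be made strict by any weak separator, a Farkas-type step writes $-z_i$ as a nonnegative combination of the remaining $z_j$, placing the origin in $Conv(z_1,\ldots,z_n)$, and Carath\'eodory's theorem then extracts $p$ linearly dependent vectors. Your argument is more elementary and constructive (it exhibits an explicit strict separator), while the paper's ties the statement to the convex-hull formulation it uses elsewhere; both are valid. Two small points to tighten: in \eqref{eq_separation_minkowski}$\Rightarrow$\eqref{eq_separation_nonzero}, the conclusion $w=0$ when $\beta=0$ follows from $\beta_{2:p}=0$ (so $w=\sum_{k\geq2}\beta_kz_{\cdot,k}=0$), not from $\lambda=0$ alone; and extending $I_0$ to a set of $p$ indices to inherit linear independence uses $n\geq p$, which is the paper's standing assumption and is also needed implicitly in its own proof.
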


\begin{proof}[Proof of Proposition \ref{prop_equivalence}]
Clearly, \eqref{eq_separation_strong} implies \eqref{eq_separation_weak} and \eqref{eq_separation_weak} implies \eqref{eq_separation_nonzero}.
However, the definitions are in general not equivalent.
To see that \eqref{eq_separation_weak} does not imply \eqref{eq_separation_strong}, take $n=2$, $p>1$, $z_1=-e_1$, $z_2=e_1$ and $z_3=e_2$.
Then, \eqref{eq_separation_weak} occurs, since for example $\beta=e_2$ is a solution.
However, as any solution must satisfy $\beta_1=0$, the event \eqref{eq_separation_strong} does not occur.
To see that \eqref{eq_separation_nonzero} does not imply \eqref{eq_separation_weak}, take $n=2$, $p>1$, $z_1=-e_1$ and $z_2=e_1$. 
The event in \eqref{eq_separation_nonzero} occurs, since any $\beta\in S^{p-1}$ with $\beta_1=0$ is a solution. However, \eqref{eq_separation_weak} does not occur.

We show that if $z_1,\ldots,z_n$ span $\mathbb{R}^p$, then \eqref{eq_separation_nonzero} implies \eqref{eq_separation_weak}.
To see why, note that if \eqref{eq_separation_nonzero} holds, there is a nonzero $\beta$ with $z_i^T\beta\geq0$ for all $i\in[n]$.
To reach a contradiction, suppose that all inequalities were in fact equalities.
Then $\beta$ would lie in the orthogonal complement of the span of $z_1,\ldots,z_n$.
Since $z_1,\ldots,z_n$ span $\mathbb{R}^p$, this implies that $\beta=0$, which is a contradiction.

We show that if any $p$ vectors of $z_1,\ldots,z_n$ are linearly independent, then \eqref{eq_separation_weak} implies \eqref{eq_separation_strong}.
We proceed by contradiction.
Suppose that \eqref{eq_separation_weak} holds but not \eqref{eq_separation_strong}.
Let $C\subset\mathbb{R}^p$ be the set of solutions of \eqref{eq_separation_weak}, which is nonempty by assumption.
Define the $n\times p$ matrix $Z$, whose rows are the vectors $z_1,\ldots, z_n$.
If for all $i\in[n]$ there exists a $\beta^j\in C$ and a $j\in[p]$ such that $(z_j^T\beta^i)_i>0$, then the sum $\sum_{i=1}^n\beta^i$ leads to a solution of \eqref{eq_separation_strong}.
If not, then there exists an $i$ such that for all $\beta \in C$ and all $j\in[p]$ $(z_j^T\beta^i)_i\leq 0$.
Consequentially, the collection of inequalities $\{z_j^T\beta\geq0:j\in[n]\}$ implies that $z_i^T\beta\leq0$.
So, there exists a $\lambda\in [0,\infty)^{n-1}$, such that $\sum_{j\neq i}\lambda_j z_j=-z_i$.
But then, the origin lies in the convex hull of the vectors $z_1,\ldots,z_n$.
Since \eqref{eq_separation_weak} holds, the origin cannot lie in the interior of $Conv(z_1,\ldots,z_n)$, so it must lie on a face of dimension $k\leq p-1$.
As this face is itself the convex hull of a collection of points in $z_1,\ldots,z_n$, by Carath{\'e}odory's theorem there exists a subset of at most $k+1\leq p$ of these points such that the origin is a convex combination of these points.
Consequentially, there exist $p$ vectors of $z_1,\ldots,z_n$ which are not linearly independent.
Contradiction.

To show \eqref{eq_separation_strong} does not imply \eqref{eq_separation_minkowski}, we take $z_{\cdot,2},\ldots, z_{\cdot,p}=0$ and $z_{\cdot,1}=(1,\ldots,1)$.
Then, $e_1$ is a solution to \eqref{eq_separation_strong}, but \eqref{eq_separation_minkowski} does not hold.

Now suppose that $\spa\{z_{\cdot,2},\ldots z_{\cdot,p}\}\neq \{0\}$ and \eqref{eq_separation_strong} holds.
Let $Z_{-1}$ be the $n\times p-1$-matrix with columns $x_{\cdot,j}$ for $j\in\{2,\ldots,p\}$.
There exist $\alpha\in\mathbb{R}^{p-1},\beta\in\mathbb{R}$ and $u\in(0,\infty)^n$ such that:
\begin{equation}\label{eq_separation_minkowski_equation}
Z_{-1}\alpha=u+z_{\cdot,1}\beta
\end{equation}
If $u\neq z_{\cdot,1}\beta$, \eqref{eq_separation_minkowski} holds.
Else, let $\lambda'>0$ be such that for all $i\in[n]$, it holds that $\lambda'|(Z_{-1}\alpha)_i|\leq u_i/2$.
Therefore, for any $i\in[n]$,
\[
u_i':=\lambda'(Z_{-1}\alpha)_i-z_{i,1}\beta\geq -\frac{u_i}{2}+u_i>0
\]
So, $Z_{-1}(\lambda'\alpha)=z_{\cdot,1}\beta+u'$, and therefore \eqref{eq_separation_minkowski} holds.

Suppose that \eqref{eq_separation_minkowski} holds.
We show that \eqref{eq_separation_nonzero} holds.
There exists a nonzero element in $\spa\{z_{\cdot,2},\ldots z_{\cdot,p}\}\cap \left(\spa\{z_{\cdot,1}\}\oplus[0,\infty)^n\right)$, meaning that there exist $\alpha\in\mathbb{R}^{p-1}$, $\beta\in\mathbb{R}$ and an $u\in[0,\infty)^n$, such that \eqref{eq_separation_minkowski_equation} holds, where both sides of the equation are not zero.
From this, it follows that the vector $(\beta,\alpha)$ is nonzero.
Therefore, \eqref{eq_separation_nonzero} holds.

We show that \eqref{eq_separation_minkowski} does not imply \eqref{eq_separation_weak}.
Let $n=p=2$ and $z_{\cdot ,1}=z_{\cdot, 2}=e_1-e_2$.
Then, \eqref{eq_separation_minkowski} holds if there exist $\alpha,\beta\in\mathbb{R}^p$ and $u\in[0,\infty)^n$, such that:
\[
\begin{pmatrix} 1\\-1\end{pmatrix}\begin{pmatrix} 1&0\\0&0\end{pmatrix}\alpha
=\begin{pmatrix} -1\\1\end{pmatrix}\begin{pmatrix} 0&0\\0&1\end{pmatrix}\beta
+u
\]
All solutions satisfy $\alpha_1=-\beta_2$ and $u=0$.
So, \eqref{eq_separation_minkowski} holds and \eqref{eq_separation_weak} does not.
The proof is complete.
\end{proof}

\begin{rem}\label{rem_equivalence}
Let $yX$ be the $n\times p$ matrix with rows $y_jx_j$.
Suppose that the first column is nonzero and that the distribution of the remaining column vectors of $yX$, conditional on all other columns, is absolutely continuous with respect to the Lebesgue measure.
Then, any $p$ vectors in $\{y_1x_1,\ldots,y_nx_n\}$ span $\mathbb{R}^p$ almost surely, and hence \eqref{eq_separation_strong} is equivalent to \eqref{eq_separation_minkowski} almost surely by Proposition \ref{prop_equivalence}.
Note that this includes the case with intercept, as we may take the first column of $yX$ to be $y_1,\ldots,y_n$.

A standard induction argument shows why.
Without loss of generality take $p=n$.
Then, the rows of $yX$ span $\mathbb{R}^p$ if and only if the columns of $yX$ are linearly independent.
For any $j\in\{1,\ldots,p-1\}$, the span of the first $j$ columns of $yX$ has Lebesgue measure zero. 
As the $j+1$-th column of $yX$ has a distribution absolutely continuous with respect to the Lebesgue measure conditional on the previous $j$ columns, the probability that it lies in the span of the previous $j$ is zero.
It follows that the columns of $yX$ are linearly independent almost surely.
\end{rem}

\section{A formula for Youden's demon problem}\label{sec_youden}

Let $m\in\{2,3\ldots\}$, $k\in\{1,\ldots,m-1\}$.
Let $g\sim\mathcal{N}(0,I_m)$, $v\in(\mathbb{R}\setminus\{0\})^m$ and:
\[
\Sigma(v):=\sum_{j\in [m]}\frac{g_jv_j}{\|v\|_2^2},\quad
\|v\|_2^2:=\sum_{j\in [m]}v_j^2.
\]
In this section, we are interested in calculating the following probability:
\begin{equation}\label{eq_youden_general}
\mathbb{P}\left[
\bigcap_{\substack{1\leq l\leq k\\ k+1\leq r\leq m }}\frac{g_l}{v_l}\leq \Sigma(v)\leq \frac{g_r}{v_r}
\right].
\end{equation}
Our motivation for studying this probability comes from the calculation of the intrinsic volumes of the polyhedral cone $\spa\{v\}\oplus [0,\infty)^n$.
A version of this problem has already been studied in the literature:
If we take $v=(1,\ldots,1)$,
\begin{equation}\label{eq_youden}
\mathbb{P}\left[\max_{1\leq l\leq k}g_l\leq  \frac{1}{m}\sum_{j=1}^mg_j\leq \min_{k+1\leq r\leq m} g_r\right].
\end{equation}
In other words: What is the probability that the sample mean is larger than the first $k$ observations and smaller than the rest?
This is known as \textit{Youden's demon problem} \cite{kendall1954two}.

So, the probability in \eqref{eq_youden_general} can be viewed as a weighted version of Youden's demon problem in the Gaussian case.
It is worth pointing out that \eqref{eq_youden_general} is not Youden's demon problem for independent Gaussians with arbitrary standard deviations $1/|v_j|$, due to the way the weighted average $\Sigma_{LR}$ is calculated.

We study the probability in \eqref{eq_youden_general}.
We first recall some results available in the literature about Youden's demon problem in Section \ref{sec_youden_hist}.
We then provide the main results in Section \ref{sec_youden_main}.
They follow from an adaptation of a proof by \cite{kabluchko2020absorption}.
We first provide a sketch in Section \ref{sec_youden_sketch}, and in Section \ref{sec_youden_proof} the proof itself.

\subsection{Youden's demon problem}\label{sec_youden_hist}
Youden's demon problem was originally formulated in \cite{kendall1954two}.
It asks:\textit{ What is the probability that in a sample of $m$ independent observations, the sample mean lies between the $k$-th and $k+1$-th order statistic?}
Equivalently, one could calculate \eqref{eq_youden}, the probability that the sample mean lies between the $k$-th and $k+1$-th observation, with a factor of ${m\choose k}$.

For the Gaussian case, the values for $m\in\{3,\ldots,10\}$ and $k=m-1$ were computed in \citep{kendall1954two}.
\cite{david1962sample} derived asymptotic approximations for the Gaussian case.
For the Gaussian case, \cite{david1963sample} established an identity relating Youden's demon problem to the spherical measure of equilateral spherical simplices.
They also provided approximations for these spherical measures. 
\cite{six1983recurrence} later showed that the problem can be solved with a recurrence relation.
\cite{dmitrienko1997demon} calculated the probability for the uniform distribution, and \cite{dmitrienko1998demon} for the exponential distribution.
Here, we extend these results by providing a closed-form solution for the Gaussian case.

\subsection{Statement of the results}\label{sec_youden_main}
Here, we provide the main result of this section, Theorem \ref{thm_kabluchko}, as well as two Corollaries \ref{cor_youden_general} and \ref{cor_youden}.
Theorem \ref{thm_kabluchko} is a slight generalization of \cite[Proposition 1.4]{kabluchko2020absorption}.
In its proof, we follow the same line of reasoning.
We provide a sketch of the idea below.
Below, we define the complex Gaussian cumulative distribution function:
\begin{equation}\label{eq_defn_phi}
\Phi:\mathbb{C}\rightarrow\mathbb{C}\quad z\mapsto \frac{1}{2}+\frac{1}{\sqrt{2\pi}}\int_0^{z}e^{-\frac{t^2}{2}}dt.
\end{equation}

\begin{thm}\label{thm_kabluchko}
Let $m\in\{2,3,\ldots\}$, $k\in\{1,\ldots,m\}$, fix $v\in(\mathbb{R}\setminus\{0\})^m$ and $\rho\in[-1,\infty)$.
Suppose that $\eta_j^\rho$ are mean zero Gaussians, such that for all $l,r\in\{1,\ldots,m\}$:
\begin{equation}\label{eq_defn_eta}
\mathbb{E}[\eta_l^\rho\eta_r^\rho]=\frac{1}{v^2_l}1\{l=r\}+\rho\frac{1}{\|v\|_2^2}.
\end{equation}
Then, with $g\sim\mathcal{N}(0,1)$, it holds that:
\begin{equation}\label{eq_youden_general_rho}
\mathbb{P}\left[\bigcap_{j=1}^k \eta_j^\rho<0\cap\bigcap_{j=k+1}^{m} \eta_j^\rho>0\right]
=\mathbb{E}\prod_{l=1}^k\Phi\left(\frac{\sqrt{\rho}g|v_l|}{\|v\|_2}\right)\prod_{r=k+1}^m\Phi\left(\frac{-\sqrt{\rho}g|v_r|}{\|v\|_2}\right).
\end{equation}
\end{thm}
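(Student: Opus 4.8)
The plan is to establish the identity first for $\rho\geq 0$, where a one--dimensional conditioning argument applies directly, and then to extend it to $\rho\in[-1,0)$ by analytic continuation in $\rho$; this is the point where the complex Gaussian CDF becomes unavoidable. Write $F(\rho)$ and $G(\rho)$ for the left-- and right--hand sides of \eqref{eq_youden_general_rho}. As a preliminary I would record the range of validity: the covariance from \eqref{eq_defn_eta} is $\Sigma(\rho)=\mathrm{diag}(1/v_l^2)+\tfrac{\rho}{\|v\|_2^2}\mathbf{1}\mathbf{1}^T$, and Cauchy--Schwarz gives $(\sum_l x_l)^2\leq \|v\|_2^2\sum_l x_l^2/v_l^2$, so that $x^T\Sigma(\rho)x\geq (1+\rho)\sum_l x_l^2/v_l^2$. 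Hence $\Sigma(\rho)$ is positive definite exactly for $\rho\in(-1,\infty)$, which both explains the stated range and guarantees that $F(\rho)$ is a genuine, non--degenerate orthant probability there.

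For $\rho\geq 0$ I would use a one--factor representation: since a centered Gaussian vector is determined by its covariance, we may take $\eta_j^\rho=\tfrac{1}{|v_j|}\xi_j+\tfrac{\sqrt{\rho}}{\|v\|_2}g$ with $g,\xi_1,\dots,\xi_m$ i.i.d.\ standard Gaussians, which reproduces \eqref{eq_defn_eta} by a direct covariance computation. Conditioning on $g$ makes the $\eta_j^\rho$ independent, and each conditional probability is a value of the ordinary Gaussian CDF: for $l\leq k$, $\mathbb{P}[\eta_l^\rho<0\mid g]=\Phi(-\sqrt{\rho}\,g|v_l|/\|v\|_2)$, and for $r>k$, $\mathbb{P}[\eta_r^\rho>0\mid g]=\Phi(\sqrt{\rho}\,g|v_r|/\|v\|_2)$ using $1-\Phi(x)=\Phi(-x)$. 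Multiplying, taking $\mathbb{E}_g$, and then substituting $g\mapsto-g$ (the law of $g$ is symmetric) flips every sign simultaneously and lands exactly on $G(\rho)$. This proves $F=G$ on $[0,\infty)$.

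The extension to $\rho<0$ rests on showing that $F$ and $G$ are both real--analytic on $(-1,\infty)$, so that agreement on $[0,\infty)$ forces agreement throughout by the identity theorem. Analyticity of $F$ is standard: on $(-1,\infty)$ the density of $\eta^\rho$ depends analytically on $\rho$ through the positive definite $\Sigma(\rho)$, and integrating over the fixed orthant preserves this. For $G$ the apparent branch singularity of $\sqrt{\rho}$ at $0$ is harmless, because $\Phi(z)-\tfrac12$ is an odd entire function, so upon expanding the product and integrating against the symmetric variable $g$ only terms with an even total power of $g$, hence an even power of $\sqrt{\rho}$, survive; thus $G$ is genuinely a series in $\rho=(\sqrt{\rho})^2$. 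The main obstacle is convergence for $\rho<0$, where $\sqrt{\rho}=i\sqrt{|\rho|}$ and $\Phi$ is evaluated on the imaginary axis: from $\Phi(iy)=\tfrac12+\tfrac{i}{\sqrt{2\pi}}\int_0^y e^{u^2/2}\,du$ one has $|\Phi(iy)|\lesssim e^{y^2/2}$, so the product of the $m$ factors is $\lesssim \exp\!\big(\tfrac{|\rho|}{2\|v\|_2^2}g^2\sum_j v_j^2\big)=\exp(\tfrac{|\rho|}{2}g^2)$. Against the weight $e^{-g^2/2}$ the integrand is dominated by $e^{-(1-|\rho|)g^2/2}$, which is integrable precisely when $\rho>-1$; this same bound justifies differentiating under the integral sign and hence the analyticity of $G$ on $(-1,\infty)$. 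Finally I would obtain the degenerate endpoint $\rho=-1$ by letting $\rho\downarrow-1$ and invoking continuity of both sides.
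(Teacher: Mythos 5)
Your overall strategy is the same as the paper's: establish the identity for $\rho\geq 0$ via the one-factor representation $\eta_j^\rho=\xi_j/|v_j|+\sqrt{\rho}\,g/\|v\|_2$ and conditioning on $g$, then continue analytically in $\rho$, using the evenness of $\Phi(z)-\tfrac12$ to see that the right-hand side is genuinely a function of $\rho=(\sqrt{\rho})^2$. That part is correct and matches the paper's steps 1--6 (the paper works with holomorphic extensions to $\{\Re(z)>-1\}$ and $\{\Re(z)^2+1>\Im(z)^2\}$ via Morera's theorem rather than real-analyticity on $(-1,\infty)$, but the substance is the same; your claim that the orthant probability depends analytically on $\rho$ should still be backed by a domination argument on the unbounded orthant, which is what the paper's Sherman--Morrison integral representation and Cauchy--Schwarz bound provide).

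The genuine gap is at the endpoint $\rho=-1$, which is the only case actually used downstream (Corollary~\ref{cor_youden_general} takes $\rho=-1$). Your dominating function $|\Phi(iy)|\lesssim e^{y^2/2}$ yields the envelope $e^{-(1-|\rho|)g^2/2}$, which, as you note yourself, is integrable \emph{precisely when} $\rho>-1$; it therefore cannot justify dominated convergence as $\rho\downarrow-1$, nor even the finiteness of the right-hand side at $\rho=-1$. One needs the sharper asymptotic $\int_0^y e^{u^2/2}\,du\sim e^{y^2/2}/y$, so that the product of all $m$ factors against the Gaussian weight decays like $|g|^{-m}$; making this uniform over a complex neighbourhood of $-1$ is exactly the content of the paper's Lemma~\ref{lem_youden_dominated}, whose proof is not a one-liner (it reduces to showing $\int_{[0,1]^s}(\sum_j(1-t_j^2))^{-(s+1)/2}\,d\mathbf{t}<\infty$). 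Separately, "continuity of both sides" at $\rho=-1$ is asserted for the left-hand side without argument: there the covariance $\Sigma(-1)$ is singular, so you must show that the orthant probability of the degenerate limit is the limit of the orthant probabilities. The paper does this by exhibiting an explicit coupling $w_\lambda$ with $\lambda=1-\sqrt{1+\rho}$, deducing convergence in distribution from a Lipschitz bound, and (implicitly) using that each marginal of $\eta^{-1}$ still has positive variance so the boundary of the orthant is a null set. Both of these endpoint issues need to be addressed for the theorem as stated.
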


The case $v\equiv 1$ and $k=m$ follows directly from \cite[Proposition 1.4]{kabluchko2020absorption}.
We are particularly interested in the following version of Theorem \ref{thm_kabluchko}, concerning the case $\rho=-1$.

\begin{cor}\label{cor_youden_general}
Let $m\in\{2,3,\ldots\}$, $k\in\{1,\ldots,m\}$, fix $v\in(\mathbb{R}\setminus\{0\})^m$ and $g\sim\mathcal{N}(0,I_m)$.
It holds that:
\[
\mathbb{P}\left[\bigcap_{\substack{1\leq l\leq k\\ k+1\leq r\leq m }}\frac{g_l}{v_l}\leq \Sigma(g)\leq \frac{g_r}{v_r}\right]
=\mathbb{E}\prod_{l=1}^k\Phi\left(\frac{ig_1|v_l|}{\|v\|_2}\right)\prod_{r=k+1}^m\Phi\left(\frac{-ig_1|v_r|}{\|v\|_2}\right).
\]
Here, $i:=\sqrt{-1}$ is the imaginary number.
\end{cor}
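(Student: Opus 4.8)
The plan is to obtain Corollary \ref{cor_youden_general} as the boundary case $\rho=-1$ of Theorem \ref{thm_kabluchko}. The right-hand sides already coincide: substituting $\rho=-1$ into \eqref{eq_youden_general_rho} replaces $\sqrt{\rho}$ by $\sqrt{-1}=i$, which produces exactly the product $\prod_{l=1}^k \Phi(ig|v_l|/\|v\|_2)\prod_{r=k+1}^m \Phi(-ig|v_r|/\|v\|_2)$ appearing in the corollary. Hence the entire task reduces to identifying the event on the left-hand side of the corollary with the event $\bigcap_{j\leq k}\eta_j^{-1}<0\cap\bigcap_{j>k}\eta_j^{-1}>0$ from the theorem.

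To make this identification, I would introduce the centered random variables $\xi_l:=g_l/v_l-\Sigma(g)$ for $l\in[m]$ and compute their joint covariance directly. Using $\mathbb{E}[g_ig_j]=1\{i=j\}$ together with the elementary identities $\mathbb{E}[(g_l/v_l)\Sigma(g)]=1/\|v\|_2^2$ and $\mathbb{E}[\Sigma(g)^2]=1/\|v\|_2^2$, a short calculation yields $\mathbb{E}[\xi_l\xi_r]=\frac{1}{v_l^2}1\{l=r\}-\frac{1}{\|v\|_2^2}$. This is precisely the covariance \eqref{eq_defn_eta} evaluated at $\rho=-1$. Since a centered Gaussian vector is determined by its covariance matrix, the vector $(\xi_1,\ldots,\xi_m)$ has the same joint law as $(\eta_1^{-1},\ldots,\eta_m^{-1})$.

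With the distributions matched, the events align termwise: the constraint $g_l/v_l\leq\Sigma(g)$ for $l\leq k$ is the event $\{\xi_l\leq 0\}$, and $\Sigma(g)\leq g_r/v_r$ for $r>k$ is the event $\{\xi_r\geq 0\}$, which under the distributional identity become $\{\eta_l^{-1}\leq 0\}$ and $\{\eta_r^{-1}\geq 0\}$. The discrepancy between strict and non-strict inequalities in the two formulations is immaterial, because each $\xi_j$ is a nondegenerate Gaussian: its marginal variance $1/v_j^2-1/\|v\|_2^2$ is strictly positive, since $m\geq 2$ and every entry of $v$ is nonzero force $\|v\|_2^2>v_j^2$. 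Thus the boundary events $\{\xi_j=0\}$ have probability zero, and applying Theorem \ref{thm_kabluchko} with $\rho=-1$ completes the argument.

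The one point demanding care, rather than a genuine obstacle, is that $\rho=-1$ is the endpoint of the admissible range $[-1,\infty)$; at this value the covariance matrix of $(\xi_l)$ is singular, reflecting the linear relation $\sum_l v_l^2\,\xi_l=0$. I would note explicitly that this degeneracy is harmless: the vector $(\xi_l)$ is by construction a bona fide (if rank-deficient) Gaussian obtained as a linear image of $g$, so its covariance is automatically positive semidefinite and Theorem \ref{thm_kabluchko} applies verbatim at the boundary.
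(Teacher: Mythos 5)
Your proposal is correct and follows the same route the paper takes: the paper states Corollary \ref{cor_youden_general} as the case $\rho=-1$ of Theorem \ref{thm_kabluchko}, leaving the identification of the two events implicit. Your covariance computation showing that $\left(g_l/v_l-\Sigma(g)\right)_{l\in[m]}$ has the law of $\eta^{-1}$, together with the observation that the boundary events have probability zero, correctly supplies the details the paper omits.
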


We also mention a nice special case, which occurs if $v\equiv 1$.
It provides a closed-form solution to Youden's demon problem for Gaussian covariates.

\begin{cor}\label{cor_youden}
Let $m\in\{2,3,\ldots\}$, $k\in\{1,\ldots,m\}$ and $g\sim\mathcal{N}(0,I_m)$.
Then:
\[
\mathbb{P}\left[
\max_{1\leq l\leq k}g_l\leq \frac{1}{m}\sum_{j=1}^mg_j
\leq \min_{k+1\leq r\leq m}g_r
\right]
=\mathbb{E}\Phi\left(\frac{ig_1}{\sqrt{m}}\right)^k\Phi\left(\frac{-ig_1}{\sqrt{m}}\right)^{m-k}.
\]
\end{cor}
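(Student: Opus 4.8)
The plan is to obtain Corollary \ref{cor_youden} as the special case $v\equiv 1$ of Corollary \ref{cor_youden_general}, which is already available. Since the general weighted identity has been established, the remaining work is purely to check that every object appearing in Corollary \ref{cor_youden_general} collapses to the claimed unweighted form under the substitution $v=(1,\ldots,1)$; there is no genuine analytic content left to supply. If one preferred a route that bypasses Corollary \ref{cor_youden_general}, one could instead invoke Theorem \ref{thm_kabluchko} with $\rho=-1$, where $\sqrt{\rho}=i$ furnishes the imaginary unit and the covariance \eqref{eq_defn_eta} specializes to that of the centred ratios $g_l-\Sigma(g)$, so that $\{\eta_l^{-1}<0\}$ and $\{\eta_r^{-1}>0\}$ coincide almost surely with $\{g_l\leq\Sigma(g)\}$ and $\{\Sigma(g)\leq g_r\}$.

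First I would rewrite the left-hand side. Setting $v_j=1$ for every $j$ gives $\|v\|_2^2=m$, so the weighted average becomes the sample mean, $\Sigma(g)=\sum_{j}g_jv_j/\|v\|_2^2=\frac1m\sum_{j=1}^m g_j$, and each ratio $g_l/v_l$ reduces to $g_l$. The conjunction of the events $g_l\leq\Sigma(g)$ over $1\leq l\leq k$ is equivalent to the single event $\max_{1\leq l\leq k}g_l\leq\Sigma(g)$, and the conjunction of $\Sigma(g)\leq g_r$ over $k+1\leq r\leq m$ is equivalent to $\Sigma(g)\leq\min_{k+1\leq r\leq m}g_r$. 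This reproduces exactly the probability on the left of Corollary \ref{cor_youden}.

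Next I would treat the right-hand side. With $v\equiv 1$ we have $|v_l|=1$ for every index and $\|v\|_2=\sqrt{m}$, so each factor $\Phi\!\left(ig_1|v_l|/\|v\|_2\right)$ equals $\Phi\!\left(ig_1/\sqrt{m}\right)$ and each factor $\Phi\!\left(-ig_1|v_r|/\|v\|_2\right)$ equals $\Phi\!\left(-ig_1/\sqrt{m}\right)$. As the first product runs over the $k$ indices $l\in\{1,\ldots,k\}$ and the second over the $m-k$ indices $r\in\{k+1,\ldots,m\}$, the expectation becomes $\mathbb{E}\,\Phi\!\left(ig_1/\sqrt{m}\right)^k\Phi\!\left(-ig_1/\sqrt{m}\right)^{m-k}$, which is precisely the claimed right-hand side. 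The only two steps that warrant a line of justification—rather than a true obstacle—are the identification of $\Sigma(g)$ with the sample mean (an immediate consequence of $\|v\|_2^2=m$) and the passage from the intersection of pairwise inequalities to the $\max$/$\min$ formulation; both are elementary, so the corollary follows at once.
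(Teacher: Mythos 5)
Your proposal is correct and matches the paper's own (implicit) argument: the paper presents Corollary \ref{cor_youden} precisely as the specialization $v\equiv 1$ of Corollary \ref{cor_youden_general}, and your verification that $\|v\|_2^2=m$, that $\Sigma(g)$ becomes the sample mean, and that the products collapse to the $k$-th and $(m-k)$-th powers is exactly the intended routine check. Nothing further is needed.
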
 

\begin{rem}\label{rem_real}
Here, we remark that the expectations in Corollaries \eqref{cor_youden_general} and \eqref{cor_youden} are real, and can be expressed without the imaginary number $i$.
To see why, note that for any $k\in\mathbb{N}$ and $g\sim\mathcal{N}(0,1)$,
\[
\mathbb{E}\prod_{j=1}^{k}\int_0^{\frac{ig|v_j|}{\|v\|_2}}e^{-\frac{t^2}{2}}dt
=(-1)^k\mathbb{E}\prod_{j=1}^{k}\int_0^{\frac{-ig|v_j|}{\|v\|_2}}e^{-\frac{t^2}{2}}dt.
\]
As $g$ and $-g$ have the same distribution, this expression vanishes whenever $k$ is odd.
Consequentially:
\[
\frac{1}{2^m}\mathbb{E}\prod_{l=1}^k\left(1+2i\int_0^{\frac{g|v_l|}{\|v\|_2}}e^{\frac{t^2}{2}}dt\right)
\prod_{r=k+1}^m\left(1-2i\int_0^{\frac{g|v_r|}{\|v\|_2}}e^{\frac{t^2}{2}}dt\right)
\]
\[
=\frac{1}{2^m}\sum_{S\subset [m]}\sigma_{S}2^{|S|}\mathbb{E}\prod_{j\in S}\int_0^{\frac{g|v_j|}{\|v\|_2}}e^{\frac{t^2}{2}}dt,
\]
where:
\[
\sigma_S:=\begin{cases}
(-1)^{|S|/2}(-1)^{|S\cap \{k+1,\ldots,m\}|},&\text{if }|S|\text{ even},\\
0,&\text{else.}
\end{cases}
\]
In the special case where $v_j=1$ for all $j\in\{1,\ldots,m\}$, this gives:
\[
\mathbb{E}\Phi\left(\frac{ig}{\sqrt{m}}\right)^{k}\Phi\left(-\frac{ig}{\sqrt{m}}\right)^{m-k}
\]
\[
=\frac{1}{2^{m}}\sum_{\substack{1\leq l\leq k\\1\leq r\leq m-k\\ l+r\text{ even}}}{k \choose l}{m-k\choose r}(-1)^r\mathbb{E}\left(-4\int_{0}^{g/\sqrt{m}}e^{t^2/2}dt\right)^{\frac{l+r}{2}}.
\]
\end{rem}

\subsection{Sketch of proof}\label{sec_youden_sketch}
Here, we provide a sketch of the proof of Theorem \ref{thm_kabluchko}.
The case $\rho\geq 0$ is straightforward.
If $\xi_0,\ldots,\xi_m$ are independent standard Gaussians, then the vector with entries $\xi_k/v_k-\sqrt{\rho}\xi_0/\|v\|_2$ has the same distribution as $\eta^\rho$.
We already obtain the identity \eqref{eq_youden_general_rho} for $\rho\geq 0$ from this.
Unfortunately, this does not solve our problem, as we are interested in negative values, particularly $\rho=-1$.
However, it turns out that both sides of the equation are holomorphic functions, from which the equality follows.
Below, $\Re(z)$ and $\Im(z)$ denote the real and imaginary parts of a complex number $z\in\mathbb{C}$.

As in the proof of \cite[Proposition 1.4]{kabluchko2020absorption}, we proceed as follows:
\begin{enumerate}
    \item The identity holds for $\rho\geq 0$.
    \item Write the left-hand-side of \eqref{eq_youden_general_rho} in integral form.
    \item Extend this integral to a function on $D:=\{z\in\mathbb{C}:\Re(z)>-1\}$.
    \item Note that this function is holomorphic in $D$.
    \item Extend the right-hand-side of \eqref{eq_youden_general_rho} to a complex function. 
    \item Show that this function is holomorphic in $D':=\{z\in\mathbb{C}:\Re(z)^2+1>\Im(z)^2\}$.
    \item Argue the case $\rho=-1$ by continuity.
\end{enumerate}

Step 1 shows that \eqref{eq_youden_general_rho} holds on a subset of $\mathbb{C}$ with a limit point.
We show that both sides are holomorphic functions of $z$ on $D\cap D'$, which contains the real interval $(-1,\infty)$.
So, steps 1-6 establish the identity for $\rho>-1$.
Step 7 completes the proof by continuity.

\subsection{Proof of the result}\label{sec_youden_proof}
Here, we provide the proof of Theorem \ref{thm_kabluchko}.
The proof proceeds just as \cite{kabluchko2020absorption}, with slightly more general expressions.
We provide it regardless, for the sake of completeness.
The reader may find a sketch of the proof in Section \ref{sec_youden_sketch}.

Before we start, we establish some useful notation.
Let $\eta^\rho$ be a mean Gaussian vector with the covariance structure given in \eqref{eq_defn_eta}.
We define:
\[
g_{m,k}:\left[-1,\infty\right)\rightarrow\mathbb{R},\quad
\rho\mapsto \mathbb{P}\left[\bigcap_{j=1}^k \eta_j^\rho<0\cap\bigcap_{j=k+1}^{m+1} \eta_j^\rho>0\right].
\]
We will show that $g_{m,k}$ agrees with the function $h_{m,k}:\left[-1,\infty\right)\rightarrow\mathbb{R}$, mapping:
\[
\rho\mapsto \int_{-\infty}^{\infty}\prod_{j=1}^k\Phi\left(\frac{\sqrt{\rho}x|v_j|}{\|v\|_2}\right)\prod_{q=k+1}^m\Phi\left(\frac{-\sqrt{\rho}x|v_q|}{\|v\|_2}\right)\frac{e^{-\frac{x^2}{2}}}{\sqrt{2\pi}}dx.
\]
Negative values of $\rho$ lead to the imaginary number $\sqrt{\rho}=i\sqrt{|\rho|}$.
So, in this notation, Theorem \ref{thm_kabluchko} establishes that for $r\in[-1,\infty)$,
\begin{equation}\label{eq_g=h}
g_{m,k}(\rho)=h_{m,k}(\rho).
\end{equation}

\begin{proof}[{Proof of Theorem \ref{thm_kabluchko}}]
In the \textbf{first step}, we prove \eqref{eq_g=h} for $\rho\geq 0$.
We define independent standard Gaussians $\xi_j$ for $j\in\{0,\ldots,m\}$.
Note that the vector with entries $\xi_k/v_k-\sqrt{\rho}\xi_0/\|v\|_2$ has the same distribution as $\eta^\rho$.
Therefore:
\[
g_{m,k}(\rho)=\mathbb{P}\left[\bigcap_{l=1}^k \frac{\xi_l}{v_l}<\sqrt{\rho}\frac{\xi_0}{\|v\|_2}\cap\bigcap_{r=k+1}^{m} \frac{\xi_r}{v_r}>\sqrt{\rho}\frac{\xi_0}{\|v\|_2}\right]
=h_{m,k}(\rho).
\]

Moving to the \textbf{second step}, we write $g_{m,k}$ in integral form for $r>-1$.
Let $V$ be the $m\times m$ diagonal matrix with entries $V_{jj}:=v_j^2$ and $\rho_v:=\rho/\|v\|_2^2$.
Then, $\Sigma_{m,\rho}=V^{-1}+\rho_v 1_m1_m^T$ is the covariance matrix of $\eta^\rho$, where $1_m$ is the all-ones vector in $\mathbb{R}^m$. 
This matrix is positive definite for $\rho>-1$.
To see why, note that:
\[
\frac{\det(\Sigma_{m,\rho})}{\det(V^{-1})}
=\det\left(I_m+\rho_v V1_m1_m^T\right)
=\det\left(I_1+\rho_v 1_m^TV1_m\right).
\]
The second identity is the Weinstein–Aronszajn identity.
Since all $v_j$ are nonzero, $\det(V^{-1})>0$.
The determinant on the right-hand side is equal to $1+\rho>0$.
So, $\Sigma_{m,\rho}$ is invertible, and we find its inverse with the Sherman-Morrison formula:
\[
(V^{-1}+\rho_v 1_m1_m^T)^{-1}=V-\frac{\rho_vV1_m1_m^TV}{1+\rho_v1_m^TV1_m}=V-\frac{\rho_vv^2(v^2)^{T}}{1+\rho},
\]
where we use $v^2$ to denote the vector $(v_1^2,\ldots,v_m^2)$.
Therefore, defining the orthant $A_{m,k}:=(-\infty,0)^k\times(0,\infty)^{m-k}$, using the multivariate Gaussian cumulative distribution function, we can see that $g_{m,k}(\rho)$ can be written as:
\begin{equation}\label{eq_g_as_integral}
\frac{\prod_{j=1}^m|v_j|}{(2\pi)^{m/2}\sqrt{1+\rho}}\int_{A_{m,k}}\exp\left(\frac{\rho}{2(1+\rho)}\left(\sum_{j=1}^m \frac{v_j^2}{\|v\|_2}x_j\right)^2-\frac{1}{2}\sum_{j=1}^m(v_jx_j)^2\right)dx.
\end{equation}

In \textbf{step three}, we extend this to a complex function.
To emphasize this, we distinguish between the real function $g_{m,k}$, and the complex function $g_{m,k}^\mathbb{C}$.
Let $D:=\left\{z\in\mathbb{C}:\Re(z)>-1\right\}$ and define:
\[
g^\mathbb{C}_{m,k}:D\rightarrow \mathbb{C},\quad \rho\mapsto \eqref{eq_g_as_integral},
\]
where for a complex number $z$, we use $\Re(z)$ and $\Im(z)$ to denote its real and imaginary parts.
We verify that $g^{\mathbb{C}}_{m,k}$ is well-defined on $D$, i.e. that the integral converges.
Since $|e^{z}|=e^{\Re(z)}$, it suffices to consider the real part.
By the Cauchy-Schwarz inequality:
\[
\sum_{j=1}^m\frac{v_j}{\|v\|_2}(v_jx_j)\leq\sqrt{ \sum_{j=1}^m(v_jx_j)^2},
\]
and so the real-valued part in the exponential function can be bounded as follows:
\[
\Re\left(\frac{z}{2(1+z)}\right)\left(\sum_{j=1}^m \frac{v_j^2}{\|v\|_2}x_j\right)^2-\frac{1}{2}\sum_{j=1}^m(v_jx_j)^2
\]
\[
\leq 
\frac{1}{2}\left(\Re \left(\frac{z}{1+z}\right)-1\right)\left(\sum_{j=1}^m \frac{v_j^2}{\|v\|_2}x_j\right)^2.
\]
This expression is negative if $\Re (z)>-1$.
To see why, note that:
\[
1-\Re\left(\frac{z}{1+z}\right)
=
\Re\left(\frac{1+\bar{z}}{(1+z)(1+\bar{z})}\right)=\frac{1+\Re(z)}{(1+\Re(z))^2+\Im(z)^2},
\]
where $\bar{z}:=\Re(z)-i\Im(z)$ denotes the complex conjugate.
It follows that the integral converges on $D$.

In the \textbf{fourth step}, we verify that $g^{\mathbb{C}}_{m,k}$ is holomorphic in $D$. 
The mapping $z\mapsto \sqrt{1+z}$ is holomorphic in $D$.
Hence, so is $z\mapsto 1/\sqrt{1+z}$.
The product of holomorphic functions is holomorphic, so it suffices to prove that the integral in \eqref{eq_g_as_integral} is holomorphic as a function of $\rho$ on $D$.
We do this by use of Morera's Theorem (see e.g. \cite[Theorem 10.17]{rudin}).
So, we verify that it is continuous and its integral along any triangle $\partial \Delta$ vanishes.
Continuity follows from Lebesgue's dominated convergence theorem.
Next, take any triangle $\Delta\subset D$.
We use the notation $g^{\prime\mathbb{C}}_{m,k,z}(x)$ to denote the integrand in the integral $g^{\mathbb{C}}_{m,k}(z)$. 
By Fubini's Theorem:
\[
\oint_{\partial\Delta} g^{\mathbb{C}}_{m,k}(z)dz
=\int_{A_{m,k}}\oint_{\partial\Delta} g^{\prime\mathbb{C}}_{m,k,z}(x)dz dx=0
\]
The inner integral vanishes by Cauchy's Theorem (see e.g. \cite[Theorem 10.13]{rudin}), as the mapping $z\mapsto g^{\prime\mathbb{C}}_{m,k,z}(x)$ is holomorphic in $D$.
We conclude that $g^{\mathbb{C}}_{m,k}$ is holomorphic in $D$.

We proceed to \textbf{step five}, where we extend $h_{m,k}$ to a complex function.
We will later prove that this function is holomorphic.
The function $\Phi$ defined in \eqref{eq_defn_phi} is holomorphic on $\mathbb{C}$.
However, the square root is not.
We denote the integrand of $h_{m,k}(\rho)$ with respect to the Gaussian measure by:
\[
h_{m,k}'(g,z):=\prod_{l=1}^k\Phi\left(\frac{zg|v_l|}{\|v\|_2}\right)\prod_{r=k+1}^m\Phi\left(\frac{-zg|v_r|}{\|v\|_2}\right).
\]
Therefore:
\[
h_{m,k}(\rho)=\int_{-\infty}^{\infty}h_{m,k}'(g,\sqrt{\rho})\frac{e^{-\frac{g^2}{2}}dg}{\sqrt{2\pi}}.
\]
Let $D':=\{z\in\mathbb{C}:\Re(z)^2+1> \Im(z)^2\}$.
Now, we define for each $g\in\mathbb{R}$,
\[
\phi_{m,k,g}:\mathbb{C}\rightarrow\mathbb{C},\quad
z\mapsto h_{m,k}'(g,z)+h_{m,k}'(g,-z),
\]
and:
\[
h^{\mathbb{C}}_{m,k}:D'\rightarrow\mathbb{C},\quad z\mapsto \int_0^\infty \phi_{m,k,g}(\sqrt{z})\frac{e^{-\frac{g^2}{2}}dg}{\sqrt{2\pi}}.
\]
Note that for all real $\rho>-1$, $h_{m,k}(\rho)=h^{\mathbb{C}}_{m,k}(\rho)$.
For any $x\in\mathbb{R}$, the function $z\mapsto \phi_{m,k,g}(z)$ is holomorphic on $\mathbb{C}$.
Moreover, the function is symmetric around zero: for all $z\in\mathbb{C}$, it holds that $\phi_{m,k,g}(z)=\phi_{m,k,g}(-z)$.
So, the function $\phi_{m,k,g}$ can be written as an everywhere converging power series, with only even powers of $z$.
Consequentially, for any $g\in\mathbb{R}$ there exists a sequence $\alpha_j\in\mathbb{C}$, such that for any $z\in\mathbb{C}$,
\[
\phi_{m,k,g}(\sqrt{z})=\sum_{j=0}^{\infty}\alpha_j\sqrt{z}^{2j}=\sum_{j=0}^{\infty}\alpha_jz^{j}
\]
So, the mapping $z\mapsto \phi_{m,k,g}(\sqrt{z})$ is holomorphic on $\mathbb{C}$ as well.

Next, in \textbf{step six} we show that $z\mapsto h_{m,k}^\mathbb{C}(z)$ is holomorphic in $D'$ by use of Morera's Theorem.
So, we verify that it is continuous and its integral along any triangle $\partial\Delta$ in $D'$ vanishes.
The latter is established with Fubini's Theorem and Cauchy's Theorem, as above.
Continuity follows from Lebesgue's dominated convergence theorem, which can be applied thanks to Lemma \ref{lem_youden_dominated}.
We conclude that $h_{m,k}^\mathbb{C}$ is holomorphic in $D'$.

To conclude that $g_{m,k}(\rho)$ and $h_{m,k}(\rho)$ agree for $\rho\in(-1,0]$, we argue as follows.
Take any $\rho\geq 0$.
We showed that:
\[
g_{m,k}^{\mathbb{C}}(\rho)=g_{m,k}(\rho)=h_{m,k}(\rho)=h^{\mathbb{C}}_{m,k}(\rho)
\]
So, the functions $g_{m,k}^{\mathbb{C}}$ and $h^{\mathbb{C}}_{m,k}$ agree on the positive real line, which is a subset of $D\cap D'$ with a limit point.
Consequentially, they agree on $D\cap D'$, by the identity theorem for holomorphic functions.
In particular, for real $\rho>-1$, it holds that $g_{m,k}(\rho)=h_{m,k}(\rho)$.

Finally, in \textbf{step seven}, we argue that the case $\rho=-1$ follows by continuity.
First, we show that the mapping $g_{m,k}:[-1,\infty)\rightarrow [0,1]$ is continuous at $-1$.
So, we claim that as $\rho\searrow -1$, it holds that:
\[
g_{m,k}(\rho)=\mathbb{P}\left[\eta^\rho\in A_{m,k}\right]\rightarrow \mathbb{P}\left[\eta^{-1}\in A_{m,k}\right]
\]
As $A_{m,k}$ is a Borel set, this holds if $\eta^\rho$ converges in distribution to $\eta^{-1}$.
By the Portmanteau Theorem, this convergence in distribution occurs if for any bounded Lipschitz function $f$, the expectation $\mathbb{E}f(\eta^\rho)$ converges to $\mathbb{E}f(\eta^{-1})$.
To show this, we restrict the domain of $\rho$ to $[-1,0]$ and parametrize $\eta^\rho$ as follows: 
We define $\lambda:=1-\sqrt{1+\rho}\in[0,1]$, i.e. $\rho=\lambda^2-2\lambda$ and let $\xi$ be a standard Gaussian in $\mathbb{R}^m$.
Then, the random vector $w_\lambda$ in $\mathbb{R}^m$ with entries $\xi_j/v_j-\lambda\sum_{q=1}^m\xi_qv_q/\|v\|_2^2$ has the same distribution as $\eta^{\rho}$ for the choice of $\lambda$ corresponding to $\rho$.
Now, let $f$ be any Lipschitz function with an arbitrary constant $L$.
It holds that:
\[
\mathbb{E}f(\eta^\rho)-\mathbb{E}f(\eta_{-1})
=\mathbb{E}f\left(w_\lambda\right)-\mathbb{E}f(w_{1})
\leq (1-\lambda)\sqrt{m}L\frac{\mathbb{E}\left|\sum_{q=1}^m\xi_qv_q\right|}{\|v\|_2^2}
\]
As $\rho\rightarrow -1$, the expression $1-\lambda$ vanishes.
We conclude that $g_{m,k}$ is continuous at $-1$.
The function $h_{m,k}:[-1,\infty)\rightarrow\mathbb{R}$ is verified to be continuous at $-1$ via Lebesgue's dominated convergence theorem, which can be applied due to Lemma \ref{lem_youden_dominated}.
The proof is complete.
\end{proof}

\begin{lem}\label{lem_youden_dominated}
Let $m\in\{2,3,\ldots\}$ and $k\in\{1,\ldots,m\}$, as well as $v\in(\mathbb{R}\setminus\{0\})^m$ and $g\in\mathbb{R}$.
There exists a constant $C_{m,v}(g)$ only depending on $m,g,v$, such that for all $z\in D':= \{y\in\mathbb{C}:\Re(y)^2+1\geq \Im(z)^2\}$, 
\[
\left|e^{-\frac{g^2}{2}}\prod_{l=1}^k\Phi\left(\frac{zg|v_l|}{\|v\|_2}\right)\prod_{r=k+1}^m\Phi\left(\frac{-zg|v_r|}{\|v\|_2}\right)\right|
\leq(1+|z|^m) C_{m,v}(g).
\]
Moreover, $\int_{-\infty}^{\infty}C_{m,v}(g)dg\leq \infty$.
\end{lem}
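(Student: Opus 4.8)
The plan is to reduce the claim to a single pointwise estimate on the complex Gaussian cumulative distribution function $\Phi$, and then to expand the product and extract from $D'$ exactly enough decay in $g$ to make the dominating function integrable. The crucial point is that a naive modulus bound is too weak: the exponential growth of $\Phi$ in the imaginary-dominated direction exactly cancels the Gaussian weight $e^{-g^2/2}$, so one must retain a polynomial correction to $\Phi$ that supplies an extra factor of order $g^{-1}$ per coordinate.

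First I would establish, for all $w\in\mathbb{C}$, a bound of the form
\[
|\Phi(w)|\le 1+B\,\frac{e^{(\Im(w)^2-\Re(w)^2)/2}}{1+|\Im(w)|},
\]
for a universal constant $B$. To prove it, split the defining integral $\int_0^w e^{-t^2/2}\,dt$ along the horizontal segment from $0$ to $\Re(w)$ and the vertical segment from $\Re(w)$ to $w$. The horizontal part is bounded by $\int_0^\infty e^{-s^2/2}\,ds=\sqrt{\pi/2}$, while on the vertical part $|e^{-t^2/2}|=e^{-\Re(w)^2/2}e^{s^2/2}$, so it is at most $e^{-\Re(w)^2/2}\int_0^{|\Im(w)|}e^{s^2/2}\,ds$. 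The estimate then rests on the elementary inequality $\int_0^a e^{s^2/2}\,ds\le C e^{a^2/2}/(1+a)$, which holds with a finite $C$ because the left side is $0$ at $a=0$ and asymptotic to $e^{a^2/2}/a$ as $a\to\infty$ (by integration by parts), so the quotient $(1+a)e^{-a^2/2}\int_0^a e^{s^2/2}\,ds$ is continuous and bounded on $[0,\infty)$. Collecting constants gives leading coefficient $1$ in the displayed bound.

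Next I would substitute $w_j=\pm zg|v_j|/\|v\|_2$ and set $c_j:=|g||v_j|/\|v\|_2$, so that $\Im(w_j)^2-\Re(w_j)^2=c_j^2 s$ with $s:=\Im(z)^2-\Re(z)^2$, $|\Im(w_j)|=c_j|\Im(z)|$, and $\sum_{j=1}^m c_j^2=g^2$. On $D'$ one has $s\le 1$. Multiplying the $m$ factors of the bound above and expanding over subsets gives
\[
e^{-g^2/2}\prod_{j=1}^m|\Phi(w_j)|\le \sum_{S\subseteq[m]}B^{|S|}\,\frac{e^{-g^2/2+s\sum_{j\in S}c_j^2/2}}{\prod_{j\in S}(1+c_j|\Im(z)|)}.
\]
For a proper subset $S\subsetneq[m]$ one has $\sum_{j\in S}c_j^2=g^2\theta_S$ with $\theta_S:=\sum_{j\in S}v_j^2/\|v\|_2^2<1$, so using $s\le 1$ and $\prod_{j\in S}(1+c_j|\Im(z)|)\ge 1$, that term is at most $B^{|S|}e^{-(1-\theta_S)g^2/2}$, a $z$-free Gaussian. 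For the full set $S=[m]$ the exponent is $-(1-s)g^2/2\le 0$, and after dividing by $1+|z|^m$ I would take the supremum over $z\in D'$ and define $C_{m,v}(g)$ as the sum of the proper-subset Gaussians and this supremum.

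The main obstacle, and the only place the hypothesis $m\ge 2$ enters, is the integrability of the $S=[m]$ term. Parametrising $z\in D'$ by $Y:=|\Im(z)|\ge\sqrt{s}$ with $|z|^2=2Y^2-s$, the quantity to be maximised is $B^m e^{-(1-s)g^2/2}/\bigl((1+|z|^m)\prod_j(1+c_jY)\bigr)$; the Gaussian factor forces $s\to 1$ and the denominators force $Y\to 1$, so the supremum over $D'$ is attained near $z=i$ and behaves like $\prod_j(1+c_j)^{-1}\lesssim\|v\|_2^m/(|g|^m\prod_j|v_j|)$ for large $|g|$, i.e.\ like $|g|^{-m}$. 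Since it is continuous and bounded near $g=0$, the resulting $C_{m,v}$ has a $|g|^{-m}$ tail, which is integrable precisely because $m\ge 2$; together with the integrable Gaussian contributions this yields $\int_{-\infty}^\infty C_{m,v}(g)\,dg<\infty$, completing the proof.
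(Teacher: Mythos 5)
Your proof is correct, and it takes a genuinely different route from the paper's in the one step that matters. Both arguments expand the product over subsets $S\subseteq[m]$ and observe that the only delicate term is the one where the Gaussian weight $e^{-g^2/2}$ is exactly cancelled on the boundary of $D'$. The paper handles this by keeping the integral representation: after the change of variable $t\mapsto zg|v_j|t/\|v\|_2$ it bounds the integrand using $\Re(z^2)\geq -1$ on $D'$, pulls out $|z|^{|S|}$, and then proves integrability of the resulting envelope by Fubini --- integrating over $g$ first to reduce to the singular integral $\int_{[0,1]^s}\bigl(\sum_{j\in S}v_j^2(1-t_j^2)\bigr)^{-(s+1)/2}d\mathbf{t}$, which it controls by the AM--GM inequality. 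You instead prove a pointwise estimate on $\Phi$ itself, $|\Phi(w)|\leq 1+B\,e^{(\Im(w)^2-\Re(w)^2)/2}/(1+|\Im(w)|)$, via the horizontal-plus-vertical contour and the classical asymptotic $\int_0^a e^{s^2/2}ds\asymp e^{a^2/2}/(1+a)$. This buys an explicit envelope: proper subsets contribute genuine Gaussians $e^{-(1-\theta_S)g^2/2}$, and the critical full-subset term decays like $|g|^{-m}$, which isolates exactly where $m\geq 2$ enters ($\int_1^\infty g^{-m}dg<\infty$); the paper's proof uses $m\geq 2$ in the same essential place but buries it in the exponent $(s+1)/(2s)\leq 3/4$ of the cube integral. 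Your version is arguably more informative, since it exhibits the pointwise decay of $C_{m,v}(g)$ rather than only its integrability. One cosmetic remark: the final supremum argument (``the supremum is attained near $z=i$'') is stated heuristically; to make it airtight, split on whether $|\Im(z)|\geq 1/2$ --- if so, $\prod_j(1+c_j|\Im(z)|)\geq \prod_j(c_j/2)\gtrsim |g|^m$, and if not, $s<1/4$ so the term is at most $e^{-3g^2/8}$ --- together with the trivial bound $B^m$ near $g=0$. This is a two-line fix, not a gap.
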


\begin{proof}
We first expand the product:
\[
e^{-\frac{g^2}{2}}\prod_{l=1}^k\left(\frac{1}{2}+\frac{1}{2\pi}\int_0^{\frac{zg|v_l|}{\|v\|_2}}e^{\frac{t^2}{2}}dt\right)\prod_{r=k+1}^m\left(\frac{1}{2}+\frac{1}{2\pi}\int_0^{\frac{-zg|v_r|}{\|v\|_2}}e^{\frac{t^2}{2}}dt\right).
\]
So, up to constant factors and signs, this is a finite sum over terms indexed by subsets $S\subset\{1,\ldots,m\}$ of the form:
\[
e^{-\frac{g^2}{2}}\prod_{j\in S}\int_0^{\frac{zg|v_j|}{\|v\|_2}}e^{\frac{t^2}{2}}dt.
\]
So, the proof is completed if we can show that each of these terms is bounded in absolute value.
Henceforth, fix a nonempty $S\subset\{1,\ldots,m\}$.
First, we perform a change of variable:
\[
\left|e^{-\frac{g^2}{2}}\prod_{j\in S}\int_0^{\frac{zg|v_j|}{\|v\|_2}}e^{\frac{t^2}{2}}dt\right|
\leq 
e^{-g^2/2}\prod_{j\in S}\left(\frac{|zgv_j|}{\|v\|_2}\int_0^{1}\exp\left(-\frac{\Re(z^2) g^2v_j^2t^2}{2\|v\|_2^2}\right)dt\right).
\]
Note that as $z\in D'$, it holds that $\Re(z^2)\geq -1$.
If $\Re(z^2)>0$, then the integral with respect to $t$ is bounded from above by 1. 
Then, as the $|S|$-th absolute moment of a standard Gaussian is finite, we readily see that the expression has an integrable envelope of the form $C_{S,v}(g)|z|^{|S|}\leq C_{m,v}(g)(1+|z|^m)$.
We now suppose that $-1\leq\Re(z^2)<0$.
For such $z$, we can upper bound the term above pointwise by:
\[
e^{-g^2/2}\prod_{j\in S}\left(\frac{|zgv_j|}{\|v\|_2}\int_0^{1}\exp\left(\frac{ g^2v_j^2t^2}{2\|v\|_2^2}\right)dt\right).
\]
We show that this function is integrable with respect to the Lebesgue measure $dg$.
Let $s:=|S|$ and let $d\mathbf{t}$ be the $s$-dimensional Lebesgue measure on $[0,1]^s$.
It holds that:
\[
\int_{-\infty}^{\infty}e^{-g^2/2}\prod_{j\in S}\left(\frac{|zgv_j|}{\|v\|_2}\int_0^{1}\exp\left(\frac{g^2v_j^2t^2}{2\|v\|_2^2}\right)dt\right)\frac{dg}{\sqrt{2\pi}}
\]
\[
=\int_{-\infty}^{\infty}e^{-g^2/2}\int_{[0,1]^{s}}\prod_{j\in S}\left(\frac{|zgv_j|}{\|v\|_2}\exp\left(\frac{g^2v_j^2t_j^2}{2\|v\|_2^2}\right)\right)d\mathbf{t}\frac{dg}{\sqrt{2\pi}}
\]
\[
\leq \int_{[0,1]^{s}}\int_{-\infty}^{\infty}|g|^{s}\exp\left(-\frac{g^2}{2}\left(1-\frac{\sum_{j\in S}v_j^2t_j^2}{\|v\|_2^2}\right)\right)\frac{dg}{\sqrt{2\pi}}d\mathbf{t}
\]
\[
\stackrel{(i)}{\leq }|z|^{s}C_{s}\int_{[0,1]^{s}}\frac{1}{\left(1-\frac{\sum_{j\in S}v_j^2t_j^2}{\|v\|_2^2}\right)^{\frac{s+1}{2}}}d\mathbf{t}.
\]
In $(i)$ we perform a change of variable and then use that the $s$-th absolute moment of a standard Gaussian distribution is finite.
Now we distinguish whether $s=1$ or $s\geq 2$.
If $s=1$, the integral is upper bounded by $1/(1-v_j^2/\|v\|_2^2)$, so the expression is finite.
If instead $s\geq 2$, we can use that $\|v\|_2^2\geq \sum_{j\in S}v_j^2$ to see:
\[
\leq |z|^{s}C_{s}\|v\|_2^{s+1}\int_{[0,1]^{s}}\frac{1}{\left(\sum_{j\in S}v_j^2(1-t_j^2)\right)^{\frac{s+1}{2}}}d\mathbf{t}
\]
\[
\leq |z|^{s}C_{s}\frac{\|v\|_2^{s+1}}{\min_{1\leq k\leq s}\{|v_k|^{s+1}\}}\int_{[0,1]^{s}}\frac{1}{\left(\sum_{j=1}^{s}(1-t_j^2)\right)^{\frac{s+1}{2}}}d\mathbf{t}.
\]
This integral is finite, which follows from the inequality of arithmetic and geometric means:
\[
\int_{[0,1]^{s}}\frac{1}{\left(\sum_{j=1}^{s}(1-t_j^2)\right)^{\frac{s+1}{2}}}d\mathbf{t}
\leq \int_{[0,1]^{s}}\frac{1}{\left(s\prod_{j=1}^{s}(1-t_j^2)^{1/s}\right)^{\frac{s+1}{2}}}d\mathbf{t}
\]
\[
= \frac{1}{s^{\frac{s+1}{2}}}\left(\int_{0}^1\frac{1}{(1-t^2)^{\frac{s+1}{2s}}}dt\right)^{s}
\stackrel{(ii)}{\leq} \frac{1}{s^{\frac{s+1}{2}}}\left(\int_{0}^1\frac{1}{(1-t^2)^{\frac{3}{4}}}dt\right)^{s}<\infty.
\]
In $(ii)$ we use that we are in the case $s\geq 2$.
The proof is complete.
\end{proof}

\section{Additional and technical proofs}
\subsection{Proof of the signal-less imbalanced formula}\label{app_prop_prob_intercept}

\begin{proof}[Proof of Proposition \ref{prop_prob_intercept}]
Let $P:=\spa\{y\}\oplus[0,\infty)^n$.
Arguing as in the proof of Theorem \ref{thm_prob_separation}, we get that the probability of linear separability with intercept is:
\[
S_0(p,n)=
b^n+(1-b)^n+2\sum_{N=1}^{n-1}{n\choose N}b^N(1-b)^{n-N}\sum_{j\geq 1\text{ odd}}\nu_{n-p+j}(P|A_N),
\]
where $\nu_n(P|A_N)=1/{n\choose N}$, $\nu_0(P|A_N)=0$ and for $k\in\{1,\ldots,n-1\}$, defining $r:=k+1-l$, and $l^c:=N-l$, $r^c:=n-N-r$, the $n-k$-th intrinsic volume of $P$ given $A_N$, i.e. $\nu_{n-k}(P|A_N)$ reads:
\[
\sum_{l=1}^{N}{N\choose l}{n-N\choose r}
\mathbb{E}\left[
\Phi_{k}\left(ig\right)^{l}\Phi_k\left(-ig\right)^{r}\right]
\mathbb{E}\left[
\Phi_k\left(g\right)^{l^c}\Phi_k\left(-g\right)^{r^c}\right].
\]
We simplify this expression in the following. 
As we need to take some care with the indexing, we define $\mathbb{I}:=1\{l\leq N\}1\{k+1-l\leq n-N\}$. 
Then:
\[
{n\choose N}{N\choose l}{n-N\choose k+1-l}\mathbb{I}={n \choose k+1}{n-(k+1)\choose N-l}{k+1\choose l}\mathbb{I}.
\]
We introduce some abbreviating notation.
Let $(g,g')\sim\mathcal{N}(0,I_2)$ and define:
\[
\alpha:=b\Phi_{k}\left(ig\right),\quad \tilde{\alpha}:=(1-b)\Phi_{k}\left(-ig\right),\quad \lambda:=b\Phi_k\left(g'\right),\quad \tilde{\lambda}:=(1-b)\Phi_{k}\left(-g'\right).
\]
Finally, note that for any $k\in\{1,\ldots,n-1\}$, we have:
\[
\sum_{N=1}^{n-1}{n\choose N}b^N(1-b)^{n-N}\nu_{n-k}(P|A_N)
\]
\[
=\sum_{N=1}^{n-1}{n\choose N}\sum_{l=1}^{k}{N\choose l}{n-N\choose k+1-l}
\mathbb{E}\left[
\alpha^{l}\tilde{\alpha}^{k+1-l}
\lambda^{N-l}\tilde{\lambda}^{n-N-(k+1-l)}\right]\mathbb{I}
\]
\[
={n\choose k+1}\sum_{N=1}^{n-1}{n-(k+1)\choose N-l}\sum_{l=1}^{k}{k+1\choose l}
\mathbb{E}\left[
\alpha^{l}\tilde{\alpha}^{k+1-l}
\lambda^{N-l}\tilde{\lambda}^{n-N-(k+1-l)}\right]\mathbb{I}
\]
\[
\stackrel{(i)}{=}{n\choose k+1}\sum_{N'=0}^{n-(k+1)}{n-(k+1)\choose N'}\sum_{l=1}^{k}{k+1\choose l}
\mathbb{E}\left[
\alpha^{l}\tilde{\alpha}^{k+1-l}
\lambda^{N'}\tilde{\lambda}^{n-N'-(k+1)}\right]
\]
\[
={n\choose k+1}\mathbb{E}\left[\left(\alpha+\tilde{\alpha}\right)^{k+1}\left(\lambda+\tilde{\lambda}\right)^{n-(k+1)}\right].
\]
In $(i)$, we perform the change of variable $N':=N-l$ and remove the indices outside of the indicator function $\mathbb{I}$.
\end{proof}

\subsection{Proof of the bound for sign-flip noise}\label{app_bounds}

\begin{proof}[Proof of Proposition \ref{prop_bound_sign_flip}]
We prove the case with intercept.
The proof of the case without intercept is analogous, the proof of signal-less case with class imbalance follows the same ideas.

We assume without loss of generality that $\beta^*=e_1$.
Then, $x_{j,2:p}$ is independent of $y_j$.
Moreover, $v_j:=y_jx_{j,1}$ is independent of $y_j$.
To see why, note that:
\[
\mathbb{P}[y_j=1\cap v_j>0]
=\mathbb{P}[y_j=1\cap x_{j,1}>0]
=\mathbb{P}[y_j=1| x_{j,1}>0]\mathbb{P}[x_{j,1}>0]
=\frac{\delta}{2}.
\]
Similarly, we find that the probability that $y=-1$ and $v_j>0$ is $\delta/2$, so $v_j>0$ with probability $\delta$, and independence follows.
To emphasize this independence, we introduce the 1/2-Rademacher random variables $r_j$, independent of $v_j$.
Consequentially, the probability that the data are linearly separable with intercept is equal to:
\[
S_0(n,p)=\mathbb{P}\left[\bigcup_{\beta,\beta_0\in\mathbb{R}^{p+1}}\bigcap_{j=1}^nv_j\beta_j+r_j\beta_0+x_{j,2:p}^T\beta_{2:p}>0\right]=:\mathbb{P}[\text{sep}].
\]
Let $A_N$ be the event that the first $N$ out of $n$ values of $v$ are positive, and the remaining negative.
In the sign-flip-model, conditional on $sign(v_j)$, $|v_j|$ has the same distribution as $|x_{j,1}|$.
We conclude that conditional on $A_N$, the event $\text{sep}$ is invariant under $\delta$.
Consequentially, using \eqref{eq_cover}, we find:
\[
\frac{1}{2^{n-1}}\sum_{k=0}^{p}{n-1\choose k}
=S_0(n,p,1/2)=\sum_{N=0}^n{n\choose N}\frac{1}{2^N}\frac{1}{2^{n-N}}\mathbb{P}\left[\text{sep}|A_N\right]
\]
\[
\geq \frac{1}{(2\delta)^n}\sum_{N=0}^n{n\choose N}\delta^N(1-\delta)^{n-N}\mathbb{P}\left[\text{sep}|A_N\right]
=\frac{S_0(n,p,\delta)}{(2\delta)^n}.
\]
The proof is complete.
\end{proof}

\subsection{Proof of upper bound with statistical dimension}\label{app_bound_dimension}

\begin{proof}[Proof of Theorem \ref{thm_bound_dimension}]
Let $L_{p}$ be a linear subspace of dimension $p$ and $\theta$ be a uniform random rotation in $\mathbb{R}^n$.
Let $A_N$ be the event that the first $N$ entries of $v$ are strictly positive, and the remaining $n-N$ strictly negative. 
For any $N\in\{1,\ldots,n-1\}$, by Lemma \ref{lem_bound_P},
\[
\mathbb{P}[P\cap \theta L_{n-k}\neq \{0\}|A_N]
\]
\[
\leq \mathbb{P}\left[\left(\mathbb{R}^{N}\times[0,\infty)^{n-N}\cup[0,\infty)^{N}\times\mathbb{R}^{n-N}\right)\cap \theta L_{n-k}\neq \{0\}|A_N\right]
\]
\[
\leq\mathbb{P}\left[\mathbb{R}^{N}\times[0,\infty)^{n-N}\cap \theta L_{n-k}\neq \{0\}|A_N\right]
\]
\[
+\mathbb{P}\left[[0,\infty)^{N}\times\mathbb{R}^{n-N}\cap \theta L_{n-k}\neq \{0\}|A_N\right].
\]
We now calculate the intrinsic volumes of $P_N:=\mathbb{R}^N\times[0,\infty)^{n-N}$.
Note that the vector $g\in\mathbb{R}^n$, projected onto $P_N$, cannot land on a face of dimension less than $N$.
It lands on a face of dimension $k\geq N$, if exactly $k-N$ out of $n-N$ indices in $\{N+1,\ldots,n\}$ satisfy $g_j<0$.
So, the $k$-th intrinsic volume of $P_N$ is:
\[
\nu_k(P_N)=\begin{cases}
0,&\text{if }k\leq N,\\
\frac{1}{2^{n-N}}{n-N\choose k-N},&\text{if }k>n.
\end{cases}
\]
Therefore, the statistical dimension of $P_N$ satisfies:
\[
\delta(P_N)=\sum_{k=N}^{n}k\frac{1}{2^{n-N}}{n-N\choose k-N}
=\sum_{k'=0}^{n-N}(k'+N)\frac{1}{2^{n-N}}{n-N\choose k'}=\frac{n+N}{2}.
\]
Similarly,
\[
\delta(P_{n-N})=\frac{n+(n-N)}{2}=n-\frac{N}{2}.
\]
By Theorem \ref{thm_kinematic} and \eqref{eq_tailbounds}, we find that:
\[
\mathbb{P}\left[P_N\cap \theta L_{p-1}\neq \{0\}|A_N\right]
\leq 
\mathbb{P}[\nu(P_N)-\delta(P_N)\geq n-(p-1)-\delta(P_N)|A_N].
\]
Using the approximate kinematic formula \eqref{eq_approx_kin_up}, we find:
\[
\frac{N}{n}\leq 1-\frac{2(p-1)}{n}-4\sqrt{\frac{t}{n}}\quad\Rightarrow\quad \mathbb{P}\left[P_N\cap \theta L_{p-1}\neq \{0\}|A_N\right]\leq \exp\left(-t\right).
\]
Similarly, we find:
\[
\frac{N}{n}\geq \frac{2(p-1)}{n}+4\sqrt{\frac{t}{n}}\quad\Rightarrow\quad \mathbb{P}\left[P_{n-N}\cap \theta L_{p-1}\neq \{0\}|A_N\right]\leq \exp\left(-t\right).
\]
Together, this gives the condition:
\begin{equation}\label{eq_condition_N}
\frac{2(p-1)}{n}+4\sqrt{\frac{t}{n}}\leq \frac{N}{n}\leq 1-\frac{2(p-1)}{n}-4\sqrt{\frac{t}{n}}.
\end{equation}
Since $\mathbb{P}[yx^T\beta^*<0]=:1-\delta\leq \delta$, condition \eqref{eq_condition_delta} implies that:
\begin{equation}\label{eq_condition_delta_ul}
\frac{2(p-1)}{n}+\left(4+\frac{1}{\sqrt{2}}\right)\sqrt{\frac{t}{n}}\leq \delta\leq 1-\frac{2(p-1)}{n}-\left(4+\frac{1}{\sqrt{2}}\right)\sqrt{\frac{t}{n}},
\end{equation}
We define the set of $N$ satisfying \eqref{eq_condition_N} by $N'\subset\{1,\ldots,n-1\}$.
Given \eqref{eq_condition_delta}, the probability that any $A_N$ with $N\in N'$ occurs is bounded as follows:
\[
 \mathbb{P}\left[\bigcup_{N\in N'}A_N\right]
=\mathbb{P}\left[
\frac{2(p-1)}{n}+4\sqrt{\frac{t}{n}}\leq \frac{N}{n}\leq 1-\frac{2(p-1)}{n}-4\sqrt{\frac{t}{n}}
\right]
\]
\[
\stackrel{(i)}{\geq} \mathbb{P}\left[
\delta-\sqrt{\frac{t}{2n}}\leq \frac{N}{n}\leq \delta+\sqrt{\frac{t}{2n}}
\right]\stackrel{(ii)}{\geq} 1-2\exp(-t),
\]
where in $(i)$ we used \eqref{eq_condition_delta_ul} and in $(ii)$ Hoeffding's inequality, and by abuse of notation used $N$ as the random variable giving the number of positive entries of $v$.
Finally,
\[
\mathbb{P}[P\cap \theta L_{p-1}\neq \{0\}]
\leq \mathbb{P}\left[\bigcup_{N\not\in N'}A_N\right]+\sum_{N\in N'}\mathbb{P}[A_N]\mathbb{P}[P\cap \theta L_{p-1}\neq \{0\}|A_N]
\]
\[
\leq 3\exp\left(-t\right).
\]
The proof is complete.
\end{proof}

\subsection{Proof of inequality description}\label{app_description}
Here, we provide the proof of the inequality description of the polyhedral cone $P:=\spa\{v\}\oplus[0,\infty)^n$ in Theorem \ref{thm_description}.

\begin{proof}[Proof of Theorem \ref{thm_description}]
We first introduce some notation.
For all pairs $(l,r)\in\mathcal{I}_N:=\{1,\ldots,N\}\times\{N+1,\ldots,n\}$, we define $a^{lr}:=-e_l/|v_l|-e_{r}/|v_{r}|$, where $e_l$ denotes the $l$-th unit vector.
Moreover, we denote the polyhedral cone on the right-hand side of \eqref{eq_description} by $P'$.
So, our goal is to show that $P=P'$.

We first show the inclusion $P\subset P'$.
Take any $x\in P$.
There exists an $\alpha\in \mathbb{R}$ and an $u\in[0,\infty)^n$, such that $x=\alpha v+u$.
Now, take any $(l,r)\in\mathcal{I}_N$.
Since $a^{lr}\in \ker\{v\}\cap(-\infty,0]^n$,
\[
\langle a^{lr},x\rangle =\langle a^{lr},u\rangle\leq 0
\]
So, any element of $P$ satisfies all inequalities in the description of $P'$.
Consequentially, $P\subset P'$.

Since $P$ is a polyhedron, it can be described by finitely many inequalities.
Any inequality whose hyperplane does not intersect $P$ is redundant.
So, we may only consider inequalities that define faces of $P$.
Consequentially, by Lemma \ref{lem_polar}, $P$ may be described using only inequalities of the form $\langle w,x\rangle\leq 0$, where $w\in\ker\{v\}\cap(-\infty,0]^n$.
We now show that any such inequality is implied by inequalities in the description of $P'$.
If so, $P'\subset P$.

Fix any $w\in \ker\{v\}\cap(-\infty,0]^n$.
We show that $w$ is equal to a linear combination of elements in $\{a^{lr}:(l,r)\in\mathcal{I}_N\}$ with positive coefficients.
If so, the inequality $\langle w,x\rangle\leq 0$ is implied by inequalities in the description of $P'$.
We prove this by constructing a $\beta\in[0,\infty)^{|\mathcal{I}_N|}$, such that for $k\in\{1,\ldots,N\}$, 
\begin{equation}\label{eq_poly_coefficient}
\sum_{(l,r)\in\mathcal{I}_N}\beta_{lr}a^{lr}_k=w_k.
\end{equation}
and for $q\in\{N+1,\ldots,n\}$,
\begin{equation}\label{eq_poly_inequality}
 \sum_{(l,r)\in\mathcal{I}_N}\beta_{lr}a^{lr}_q\geq w_q.
\end{equation}
If this is possible, then:
\[
-\sum_{k=1}^Nw_kv_k
\stackrel{\eqref{eq_poly_coefficient}}{=}
-\sum_{k=1}^N\sum_{(l,r)\in\mathcal{I}_N}\beta_{lr}a^{lr}_kv_k
\stackrel{(i)}{=}
\sum_{q=N+1}^n\sum_{(i,j)\in\mathcal{I}_N}\beta_{lr}a^{lr}_qv_q
\stackrel{\eqref{eq_poly_inequality}}{\leq} \sum_{q=N+1}^nw_qv_q.
\]
The identity $(i)$ holds, since $a^{lr}\in\ker\{v\}$.
Moreover, since $w\in \ker\{v\}$, we have:
\begin{equation}\label{eq_poly_identity}
-\sum_{k=1}^Nw_kv_k
=\sum_{q=N+1}^nw_qv_q.
\end{equation}
Therefore, \eqref{eq_poly_inequality} must be an equality for all $r\in\{N+1,\ldots,n\}$.
In summary, if we can find a vector $\beta\in[0,\infty)^{|\mathcal{I}_N|}$ which satisfies \eqref{eq_poly_coefficient} and \eqref{eq_poly_inequality}, then it is a linear combination of elements in $\{a^{lr}:(l,r)\in\mathcal{I}_N\}$ with positive coefficients, identical to $w$.

\begin{algorithm}[htb]
    \caption{A step in the proof of Theorem \ref{thm_description}. Linear expansion of a normal vector $w$ of a face of $P$, using $a^{lr}$ in the description of $P'$.}\label{alg_1}
    
\begin{algorithmic}
\STATE{     \textbf{Input} $w\in \ker\{v\}\cap(-\infty,0]^n$.}
\STATE{     \textbf{Output} $\beta\in[0,\infty)^{|\mathcal{I}_N|}$, satisfying $\sum_{(i,j)\in\mathcal{I}_N}\beta_{ij}a^{ij}=w$.}
\STATE{ Initialize $\beta\leftarrow 0\in[0,\infty)^{|\mathcal{I}_N|}$.}
	\FOR{$k\in\{1,\ldots,N\}$}
		\STATE{ $q\leftarrow N+1$.}
		\WHILE{$\sum_{(l,r)\in\mathcal{I}_N}\beta_{lr}a_k^{lr}> w_k$}
			\IF{$\sum_{(l,r)\in\mathcal{I}_N}\beta_{lr}a_q^{lr}> w_q$}
				\STATE{$\beta_{kq}\leftarrow \frac{1}{a_k^{kq}}\left(w_k-\sum_{(l,r)\neq (k,q)}\beta_{lr}a_l^{lr}\right)\vee\frac{1}{a_q^{kq}}\left(w_q-\sum_{(l,r)\neq (k,q)}\beta_{lr}a_r^{lr}\right)$ .}
			\ENDIF
			\STATE{$q\leftarrow q+1$.}
		\ENDWHILE
	\ENDFOR
\end{algorithmic}
\end{algorithm}

In Algorithm \ref{alg_1}, we create  such a vector $\beta\in[0,\infty)^{|\mathcal{I}_N|}$.
In the following, we prove the correctness of Algorithm \ref{alg_1}.
Fix any $k\in\{1,\ldots,N\}$.
If $w_k=0$, the while-loop in the $k$-th iteration of the for-loop never runs.
Hence, $\beta_{kq}=0$ for all $q\in\{N+1,\ldots,n\}$, as initialized.
Since these indices are not modified in later iterations either, the $\beta$ which is returned satisfies:
\[
\left(\sum_{(l,r)\in\mathcal{I}_N}\beta_{lr}a^{lr}\right)_k
=\sum_{q={N+1}}^n\beta_{kq}a^{kq}_k=0=w_k.
\]
By definition $w_k\leq 0$, so the remaining case is that $w_k<0$.
If this is true, then the while-loop runs at least once, since the sum in its condition vanishes after initialization.
The updates of $\beta_{kq}$ are such that for all $k\in\{1,\ldots,N\}$,
\begin{equation}\label{eq_alg_k}
\sum_{(l,r)\in\mathcal{I}_N}\beta_{lr}a_k^{lr}\geq w_k,
\end{equation}
and for all $q\in\{N+1,\ldots,n\}$,
\begin{equation}\label{eq_alg_l}
\sum_{(l,r)\in\mathcal{I}_N}\beta_{lr}a_q^{lr}\geq w_q.
\end{equation}
In particular, condition \eqref{eq_poly_inequality} is never violated.
On the other hand, the while-loop iterates through the indices $q\in\{N+1,\ldots,n\}$, until \eqref{eq_alg_k} is an equality, in which case \eqref{eq_poly_coefficient} is satisfied, or until $q=n+1$.
We need to show that the latter only occurs if \eqref{eq_alg_k} is an equality.
If $q=n+1$ is reached, then for all $q\in\{N+1,\ldots,n\}$, we have equality in \eqref{eq_alg_l}.
But then, if \eqref{eq_alg_k} is a strict inequality for some $k\in\{1,\ldots,N\}$, we get: 
\[
\sum_{q=N+1}^n v_q\sum_{(l,r)\in\mathcal{I}_N}\beta_{lr}a^{lr}_q
=
\sum_{q=N+1}^n v_qw_q
\stackrel{\eqref{eq_poly_identity}}{=}
-\sum_{k=1}^Nv_kw_k
>
-\sum_{k=1}^Nv_k\sum_{(l,r)\in\mathcal{I}_N}\beta_{lr}a^{lr}_k.
\]
This contradicts that all $a^{lr}$ lie in the kernel of $v$.

We proved that $P\subset P'$ and that $P'\subset P$.
Next, we show that every inequality in the description of $P$ (respectively $P'$) is facet-defining.
Note that $P$ is full-dimensional since it contains $[0,\infty)^n$.
So, any inequality which is not redundant is facet-defining \cite[Theorem 8.1]{schrijver1998theory}.
To reach a contradiction, we assume that for some $(k,q)\in\mathcal{I}_N$, the inequality $\langle a^{kq},\cdot\rangle\leq 0$ is redundant.
An inequality is redundant if removing it from the description of the polyhedron does not change the polyhedron.
Since all constraints in the inequality description of $P'$ are of the form $\langle a^{lr},\cdot\rangle\leq 0$, it follows that $a^{kq}$ can be written as a positive combination of $a^{lr}$, with $(l,r)\in\mathcal{I}_N\setminus\{(k,l)\}$.
However, all coefficients of such $a^{kq}$ are negative, so any positive combination of such vectors has a strictly negative entry in $\{1,\ldots,n\}\setminus\{k,l\}$.
This contradicts that it coincides with $a^{kq}$.
We showed that every inequality is facet-defining.
The proof is complete.
\end{proof}

\subsection{Proof that Algorithm \ref{alg_2} is correct}\label{app_proof_correct}
In this section, we prove that Algorithm \ref{alg_2} is correct, i.e. that it indeed produces the projection of $x$ onto $P$.
To do so, we verify that all conditions in Proposition \ref{prop_projection} are satisfied.
First, we need some auxiliary results.
We start with the observation that the Algorithm terminates.

\begin{lem}\label{lem_alg2_term}
Algorithm \ref{alg_2} terminates.
\end{lem}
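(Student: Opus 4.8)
The plan is to combine a bounded, monotone progress measure with a ``no-stalling'' argument. First I would record two invariants maintained by the loop. By the ordering \eqref{eq_increasing_decreasing} and the fact that the sets are enlarged by appending the next index, $L$ and $R$ are always prefixes, $L=\{1,\dots,|L|\}$ and $R=\{N+1,\dots,N+|R|\}$; and since the two \textbf{if}-guards only fire while $|L|<|\bar L|$ and $|R|<|\bar R|$, we keep $L\subseteq\bar L$ and $R\subseteq\bar R$ throughout. Neither set ever shrinks, so $|L|+|R|$ is nondecreasing and bounded above by $|\bar L|+|\bar R|\le n$; hence it can strictly increase only finitely often.

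The crux, and the step I expect to be the main obstacle, is to show that the loop cannot stall: I claim that in any pass through the body in which \emph{neither} $L$ nor $R$ is enlarged, the vector $y=y^{LR}$ computed in that pass already lies in $P$, so the loop terminates at the next condition check. Writing $\Sigma:=\Sigma_{LR}(x)$, recall $y_i/v_i=\Sigma$ for $i\in L\cup R$ and $y_i/v_i=x_i/v_i$ otherwise. Because the ratios $x_i/v_i$ increase over $\{1,\dots,N\}$ and decrease over $\{N+1,\dots,n\}$, the smallest left ratio of $y$ equals $\min\{\Sigma,\,x_{|L|+1}/v_{|L|+1}\}$ and the largest right ratio of $y$ equals $\max\{\Sigma,\,x_{N+|R|+1}/v_{N+|R|+1}\}$ (these extreme un-clamped ratios are exactly the quantities tested in the two guards of Algorithm \ref{alg_2}, with the convention that the extra term is absent and the value is $\Sigma$ when $L$ or $R$ already exhausts its side). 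By the inequality description in Theorem \ref{thm_description}, $y\in P$ is precisely the inequality $\min_{l\le N}y_l/v_l\ge\max_{r>N}y_r/v_r$ between these two quantities.

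To finish the claim I would check that both guards failing forces $x_{|L|+1}/v_{|L|+1}\ge\Sigma$ and $x_{N+|R|+1}/v_{N+|R|+1}\le\Sigma$ in every case. When a guard fails through its ratio test this is immediate. When instead $L=\bar L$, the definition \eqref{eq_LR_bar_def} gives $x_{|L|+1}/v_{|L|+1}\ge\max_{r>N}x_r/v_r$, and since $\Sigma$ is a $v_k^2$-weighted average of the ratios $x_k/v_k$ over $k\in L\cup R$ (each of which is at most $\max_{r>N}x_r/v_r$ because $L\subseteq\bar L$), we obtain $\Sigma\le\max_{r>N}x_r/v_r\le x_{|L|+1}/v_{|L|+1}$; the case $R=\bar R$ is symmetric. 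Hence $\min_{l\le N}y_l/v_l=\Sigma=\max_{r>N}y_r/v_r$, so $y\in P$.

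Putting the pieces together: since $|L|+|R|$ is bounded and nondecreasing, it must remain constant on some iteration, i.e.\ neither set grows there, and by the claim the loop halts at the following test. Alternatively one can note that once $L$ and $R$ saturate at $\bar L$ and $\bar R$ the membership $y^{\bar L\bar R}\in P$ is exactly Lemma \ref{lem_projection_in_P}. I would present the prefix/invariant bookkeeping first and then isolate the no-stalling claim as the heart of the argument.
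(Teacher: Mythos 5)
Your proof is correct and follows essentially the same route as the paper's: both arguments hinge on showing that when a guard fails because $L=\bar L$ (or $R=\bar R$) the corresponding ratio test $x_{|L|+1}/v_{|L|+1}\geq\Sigma_{LR}$ still holds, so that a pass in which neither set grows yields $y^{LR}\in P$ and the loop exits. Your packaging via the bounded monotone quantity $|L|+|R|$ and an isolated no-stalling claim is a slightly cleaner presentation of the paper's contradiction argument, but the mathematical content is the same.
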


\begin{proof}
We first prove that Algorithm \ref{alg_2} terminates.
If it does not terminate, the while loop is run indefinitely with either of four conditions being violated: $|L|<|\bar{L}|$, $x_{|L|+1}/v_{|L|+1}<\Sigma_{LR}$, $|R|<|\bar{R}|$ or $x_{|R|+1}/v_{|R|+1}>\Sigma_{LR}$.
We first show that $|L|=|\bar{L}|$ implies $x_{|L|+1}/v_{|L|+1}\geq \Sigma_{LR}$.

If $|L|=|\bar{L}|$, then $L=\bar{L}$. 
So, $|L|+1\not\in \bar{L}$, and therefore for all $r\in\{N+1,\ldots,n\}$ we have $x_{|L|+1}/v_{|L|+1}\geq x_{r}/v_{r}$.
Moreover, since the fractions $x_{l}/v_{l}$ are indexed increasingly over $\{1,\ldots,N\}$, it follows that:
\[
\frac{y_{|L|+1}}{v_{|L|+1}}
=\frac{\sum_{k\in L\cup R} \frac{y_{|L|+1}}{v_{|L|+1}}v_k^2}{\sum_{k\in L\cup R}v_k^2}
\geq \frac{\sum_{k\in L\cup R} \frac{y_{k}}{v_{k}}v_k^2}{\sum_{k\in L\cup R}v_k^2}=\Sigma_{LR}.
\]
So, $|L|=|\bar{L}|$ implies $x_{|L|+1}/v_{|L|+1}\geq \Sigma_{LR}$.
Similarly, $|R|=|\bar{R}|$ implies $x_{|R|+1}/v_{|R|+1}\leq \Sigma_{LR}$.

By Lemma \ref{lem_projection_in_P}, the algorithm will terminate as soon as $L=\bar{L}$ and $R=\bar{R}$, since then $y\in P$.
So, it can only run indefinitely with $|L|<|\bar{L}|$ or $|R|<|\bar{R}|$.
Suppose that $|L|<|\bar{L}|$ and suppose that the the algorithm runs indefinitely, since $x_{|L|+1}/v_{|L|+1}\geq \Sigma_{LR}$.
Whether $|R|<|\bar{R}|$ or $|R|=|\bar{R}|$, it must hold that after finitely many iterations of the loop, $x_{|R|+1}/v_{|R|+1}\leq \Sigma_{LR}$.
But then, $y$ satisfies all inequalities in the description of $P$, so the algorithm terminates.
Contradiction.
We conclude that Algorithm \ref{alg_2} terminates.
\end{proof}

For $(l,r)\in\{1,\ldots,N\}\times \{N+1,\ldots,n\}$, we use the notation:
\begin{equation}\label{eq_notation_sigma_lr}
\Sigma_{lr}:=\Sigma_{\{1,\ldots,l\}\{N+1,\ldots,r\}},\quad
\langle x,v\rangle_{lr}:=\sum_{i=1}^lx_iv_i+\sum_{j=N+1}^rx_iv_i.
\end{equation}

\begin{lem}\label{lem_average_monotone}
Let $2\leq l\leq  |\bar{L}|$ and $1\leq r-N\leq |\bar{R}|$.
If $x_{l}/v_{l}<\Sigma_{(l-1)r}$, then:
\[
\frac{x_{l}}{v_{l}}<\Sigma_{lr}<\Sigma_{(l-1)r}.
\]
Let $1\leq l\leq  |\bar{L}|$ and $2\leq r-N\leq |\bar{R}|$.
If $x_{r}/v_{r}>\Sigma_{l(r-1)}$, then:
\[
\frac{x_r}{v_r}>\Sigma_{lr}>\Sigma_{l(r-1)}.
\]
\end{lem}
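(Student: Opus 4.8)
The plan is to recognize both statements as the elementary fact that inserting a new value into a weighted average produces a \emph{strict convex combination} of the old average and the inserted value, which therefore lies strictly between them. I would begin by fixing notation for the first claim: write $S := \{1,\ldots,l-1\}\cup\{N+1,\ldots,r\}$ for the index set underlying $\Sigma_{(l-1)r}$, and set $W := \sum_{k\in S} v_k^2$ for its total weight. Since $r-N\geq 1$ the right-hand block $\{N+1,\ldots,r\}$ is nonempty and every $v_k\neq 0$, so $W>0$ and $\Sigma_{(l-1)r} = \big(\sum_{k\in S} x_k v_k\big)/W$ is well defined.

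Next I would add the single index $l$. Using the identity $x_l v_l = (x_l/v_l)\,v_l^2$, the definition of $\Sigma_{lr}$ over $S\cup\{l\}$ rearranges into
\[
\Sigma_{lr} = \frac{\sum_{k\in S} x_k v_k + (x_l/v_l)\,v_l^2}{W + v_l^2} = \frac{W}{W+v_l^2}\,\Sigma_{(l-1)r} + \frac{v_l^2}{W+v_l^2}\,\frac{x_l}{v_l}.
\]
Because $v_l\neq 0$, the two coefficients $W/(W+v_l^2)$ and $v_l^2/(W+v_l^2)$ are strictly positive and sum to $1$, so $\Sigma_{lr}$ is a strict convex combination of $\Sigma_{(l-1)r}$ and $x_l/v_l$ and hence lies strictly between them. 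Substituting the hypothesis $x_l/v_l<\Sigma_{(l-1)r}$ then gives $x_l/v_l<\Sigma_{lr}<\Sigma_{(l-1)r}$, which is the first claim.

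For the second claim I would argue symmetrically, inserting the index $r$ into the set $S' := \{1,\ldots,l\}\cup\{N+1,\ldots,r-1\}$ underlying $\Sigma_{l(r-1)}$, whose weight $W':=\sum_{k\in S'}v_k^2$ is positive since $l\geq 1$ makes $S'$ nonempty. The identical manipulation expresses $\Sigma_{lr}$ as a strict convex combination of $\Sigma_{l(r-1)}$ and $x_r/v_r$, and the hypothesis $x_r/v_r>\Sigma_{l(r-1)}$ yields $x_r/v_r>\Sigma_{lr}>\Sigma_{l(r-1)}$.

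There is no genuine obstacle here; the only points needing a moment's care are confirming that the denominators never vanish (guaranteed by $v_k\neq 0$ together with the range restrictions on $l$ and $r-N$, which keep the index sets $S$ and $S'$ nonempty) and bookkeeping the translation between the weighted-sum form $\langle x,v\rangle_{lr}$ and the ratio form $x_k/v_k$ via $x_k v_k = (x_k/v_k)\,v_k^2$.
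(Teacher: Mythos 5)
Your proof is correct and takes essentially the same route as the paper: both insert the term $x_lv_l=(x_l/v_l)v_l^2$ into the weighted average and compare $\Sigma_{lr}$ with $\Sigma_{(l-1)r}$ and $x_l/v_l$. Your packaging as a strict convex combination is slightly tidier, delivering both strict inequalities in one step where the paper derives them sequentially, but the underlying computation is identical.
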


\begin{proof}
Suppose that $x_{l}/v_{l}<\Sigma_{(l-1)r}$. We show $\Sigma_{lr}<\Sigma_{(l-1)r}$.
It holds that $x_lv_l=(v_l)^2x_l/v_l<(v_l)^2\Sigma_{(l-1)r}$.
Therefore:
\[
\Sigma_{lr}
=\frac{\langle x,v\rangle_{(l-1)r}+v_lx_l}{\langle v,v\rangle_{(l-1)r}+v_l^2}
=\frac{1}{\langle v,v\rangle_{(l-1)r}+v_l^2}
\left(\langle v,v\rangle_{(l-1)r}\frac{\langle x,v\rangle_{(l-1)r}}{\langle v,v\rangle_{(l-1)r}}+v_lx_l\right)
\]
\[
<\frac{1}{\langle v,v\rangle_{(l-1)r}+v_l^2}
\left(\langle v,v\rangle_{(l-1)r}\Sigma_{(l-1)r}+v_l^2\Sigma_{(l-1)r}\right)=\Sigma_{(l-1)r}.
\]
To prove that $\frac{x_{l}}{v_{l}}<\Sigma_{lr}$, we use that $\Sigma_{lr}<\Sigma_{(l-1)r}$:
\[
\Sigma_{lr}\langle v,v\rangle_{lr}
=\langle x,v\rangle_{(l-1)r}+x_lv_l
=\langle v,v\rangle_{(l-1)r}\Sigma_{(l-1)r}+x_lv_l
>\langle v,v\rangle_{(l-1)r}\Sigma_{lr}+x_lv_l.
\]
Rearranging gives $\frac{x_{l}}{v_{l}}<\Sigma_{lr}$.
The proof of the second statement is analogous.
\end{proof}

\begin{lem}\label{lem_projection_increases}
If $y$ is the result of Algorithm \ref{alg_2}, then for all $j\in\{1,\ldots,n\}$, $x_j\leq y_j$.
\end{lem}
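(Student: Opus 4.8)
The plan is to reduce the claim to two families of scalar inequalities and then establish them by carrying the algorithm's weighted average $\Sigma_{LR}$ along as an invariant. First I would dispose of the trivial case: if $x\in P$ the while loop never executes, so $y=x$ and $x_j=y_j$ for every $j$. Assume henceforth $x\notin P$, and let $(L,R)$ be the pair produced at termination; by construction these are initial segments $L=\{1,\dots,a\}$ and $R=\{N+1,\dots,N+b\}$. For $j\notin L\cup R$ the claim is immediate since $y_j=x_j$. For $j\in L$ we have $v_j>0$ and $y_j=v_j\Sigma_{LR}$, so $x_j\le y_j$ is equivalent to $x_j/v_j\le\Sigma_{LR}$; for $j\in R$ we have $v_j<0$, so $x_j\le y_j=v_j\Sigma_{LR}$ is equivalent to $x_j/v_j\ge\Sigma_{LR}$ (dividing by $v_j<0$ flips the inequality). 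Thus it suffices to show that every left ratio indexed by $L$ lies below $\Sigma_{LR}$ and every right ratio indexed by $R$ lies above it.

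Second, I would prove the following invariant by induction over the increments performed by the algorithm: at every intermediate state $(L,R)=(\{1,\dots,l\},\{N+1,\dots,N+r\})$ one has $x_i/v_i\le\Sigma_{LR}$ for all $i\in L$ and $x_j/v_j\ge\Sigma_{LR}$ for all $j\in R$. For the base case $L=\{1\},R=\{N+1\}$, the quantity $\Sigma_{LR}$ is a weighted average, with positive weights $v_1^2$ and $v_{N+1}^2$, of $x_1/v_1$ and $x_{N+1}/v_{N+1}$, hence lies between them; and since $x\notin P$ there is a violated pair $(l_0,r_0)$, so $x_1/v_1\le x_{l_0}/v_{l_0}<x_{r_0}/v_{r_0}\le x_{N+1}/v_{N+1}$ by \eqref{eq_increasing_decreasing}, putting the two extreme ratios on the correct sides of $\Sigma_{LR}$. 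For the inductive step I would exploit the structural fact furnished by \eqref{eq_increasing_decreasing}: because the left ratios increase in their index and the right ratios $x_r/v_r$ decrease in their index, a newly added left index carries the \emph{largest} left ratio seen so far, while a newly added right index carries the \emph{smallest} right ratio. Lemma \ref{lem_average_monotone} then does the work. Adding a left index whose ratio is below the current average strictly lowers the average while keeping that ratio below it, so all (smaller) left ratios stay below the new, smaller average, and the right ratios — previously above the larger average — remain above it; symmetrically, adding a right index strictly raises the average, the added (smallest) right ratio stays above it and thereby controls all the larger right ratios, while the left ratios, already below the smaller average, remain below.

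The last point to settle — and the only real source of friction — is that a single iteration may enlarge both $L$ and $R$ using the same threshold $\Sigma_{LR}$. To keep the induction honest I would split such an iteration into two successive single-index additions and verify that the hypothesis of Lemma \ref{lem_average_monotone} still holds for the second one after the average has shifted: if the left index is added first the average drops, so the right index, whose ratio exceeded the original average, still exceeds the new smaller one, and the analogous check works if the right index is added first. This is precisely the order-independence noted in Example \ref{exa_projection}, and it shows the invariant survives any interleaving. Evaluating the invariant at the terminal state $(\{1,\dots,a\},\{N+1,\dots,N+b\})$ gives $x_i/v_i\le\Sigma_{LR}$ on $L$ and $x_j/v_j\ge\Sigma_{LR}$ on $R$, which by the sign bookkeeping of the first paragraph yields $x_j\le y_j$ for all $j$, completing the proof.
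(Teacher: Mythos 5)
Your argument is correct and follows essentially the same route as the paper: both reduce the claim to showing $x_l/v_l\leq\Sigma_{LR}$ for $l\in L$ (and the mirror inequality on $R$), and both obtain this from Lemma \ref{lem_average_monotone} together with the ordering \eqref{eq_increasing_decreasing}, which makes the newly added index the extremal one. The only difference is presentational --- you carry the inequalities as a loop invariant through every intermediate $(L,R)$, while the paper argues directly from the last-added index $|L|$ and observes that all subsequent (right-side) additions only increase the average.
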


\begin{proof}
Fix any $l\in\{1,\ldots,N\}$.
Algorithm \ref{alg_2} terminates by Lemma \ref{lem_alg2_term}.
Let $L,R$ be the index sets at which Algorithm \ref{alg_2} terminates.
If $l\not\in L$, then $x_l=y_l$.
Suppose instead that $l\in L$.
Since the index $|L|$ is included in $L$, there exists an $r\in R$, for which $x_{|L|}/v_{|L|}<\Sigma_{(|L|-1)r}$.
By Lemma \ref{lem_average_monotone} it follows that, $x_{|L|}/v_{|L|}<\Sigma_{|L|r}$ and $\Sigma_{|L|r}< \Sigma_{|L||R|}=\Sigma_{LR}$.
We conclude that:
\[
\frac{x_l-y_l}{v_l}=\frac{x_l}{v_l}-\Sigma_{LR}\leq \frac{x_{|L|}}{v_{|L|}}-\Sigma_{LR}<0.
\]
This concludes the proof for $l\in\{1,\ldots,N\}$, the proof for $r\in\{N+1,\ldots,n\}$ is analogous.
\end{proof}

\begin{lem}\label{lem_projection_kernels}
If $y=y^{LR}(x)$ is of the form \eqref{eq_defn_projection}, then it holds that $\langle x-y,v\rangle =0$ and $\langle x-y,y\rangle =0$.
\end{lem}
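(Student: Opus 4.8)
The plan is to exploit the fact that $x - y^{LR}(x)$ is supported entirely on the index set $L \cup R$. Indeed, by the definition \eqref{eq_defn_projection}, for $i \notin L \cup R$ we have $y_i^{LR}(x) = x_i$, so $(x - y)_i = 0$; only the coordinates $i \in L \cup R$, where $(x-y)_i = x_i - v_i \Sigma_{LR}(x)$, contribute to either inner product. This immediately reduces both claimed identities to sums over $L \cup R$.

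First I would verify $\langle x - y, v\rangle = 0$ by a direct computation:
\[
\langle x - y, v\rangle = \sum_{i \in L \cup R} \left(x_i - v_i \Sigma_{LR}(x)\right) v_i = \sum_{i \in L \cup R} x_i v_i - \Sigma_{LR}(x) \sum_{i \in L \cup R} v_i^2 = 0,
\]
where the last equality is exactly the defining relation of $\Sigma_{LR}(x) = \big(\sum_{k \in L \cup R} x_k v_k\big)/\big(\sum_{k \in L \cup R} v_k^2\big)$ in \eqref{eq_defn_projection}. In other words, $\Sigma_{LR}(x)$ is built precisely so that $x-y$ is orthogonal to $v$; this is the projection/weighted-average identity underlying the whole construction.

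For the second identity, I would observe that on $L \cup R$ the vector $y$ coincides with $\Sigma_{LR}(x)\, v$, while off $L \cup R$ the difference $x - y$ vanishes. Hence
\[
\langle x - y, y\rangle = \sum_{i \in L \cup R} (x_i - v_i \Sigma_{LR}(x)) \cdot v_i \Sigma_{LR}(x) = \Sigma_{LR}(x)\, \langle x - y, v\rangle = 0,
\]
using the first part. There is no real obstacle here: the statement is a direct consequence of how $\Sigma_{LR}(x)$ is defined, so the only thing to be careful about is tracking the support of $x-y$ and not accidentally summing over coordinates outside $L \cup R$.
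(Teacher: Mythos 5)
Your proof is correct and follows essentially the same route as the paper's: both arguments reduce to the support of $x-y$ on $L\cup R$ and then use the defining relation of $\Sigma_{LR}(x)$ as a weighted average to cancel the restricted inner products. The only cosmetic difference is that you deduce the second identity by factoring out $\Sigma_{LR}(x)$ and invoking the first, whereas the paper expands $\langle x,y\rangle$ and $\langle y,y\rangle$ separately; these are the same computation.
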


\begin{proof}
We prove that $\langle x-y,v\rangle =0$.
Since $x$ and $y$ coincide outside $L\cup R$, it holds that $\langle x-y,v\rangle=\langle x-y,v\rangle_{|L||R|}$.
Note that:
\[
\langle y,v\rangle_{|L||R|}=\Sigma_{LR}\langle v,v\rangle_{|L||R|}=\langle x,v\rangle_{|L||R|}.
\]
It follows that $\langle x-y,v\rangle =0$.
Similarly,
\[
\langle x,y\rangle_{|L||R|}
=\Sigma_{LR}\langle x,v\rangle_{|L||R|}
=\Sigma_{LR}^2\langle v,v\rangle_{|L||R|}
=\langle y,y\rangle_{|L||R|}.
\]
We conclude that $\langle x-y,y\rangle = \langle x-y,y\rangle_{|L||R|}=0$.
The proof is complete.
\end{proof}

\begin{thm}\label{thm_alg_2_correct}
Algorithm \ref{alg_2} terminates and is correct.
\end{thm}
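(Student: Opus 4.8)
The plan is to feed the output $y$ of Algorithm \ref{alg_2} into the characterisation of the metric projection onto a closed convex cone given in Proposition \ref{prop_projection}, and to verify its three conditions one by one using the auxiliary lemmas. Termination is exactly Lemma \ref{lem_alg2_term}, so it remains only to prove correctness. First I would record that the while loop exits precisely when its guard $y\notin P$ fails, which furnishes condition (1), $y\in P$, for free; and if the loop body never runs, then $x\in P$ and $y=x$ is trivially its own projection.

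The one point requiring a little care is that the returned $y$ is of the form $y^{LR}(x)$ in \eqref{eq_defn_projection} for the final index pair $(L,R)$, so that Lemma \ref{lem_projection_kernels} and Lemma \ref{lem_projection_increases} apply to it. Here I would note that $y$ is last overwritten inside the loop by $y\leftarrow y^{LR}$, after which $L$ and $R$ may be enlarged; but whenever $y^{LR}\in P$ the inequality description forces $x_{|L|+1}/v_{|L|+1}\ge \Sigma_{LR}$ and $x_{|R|+1}/v_{|R|+1}\le \Sigma_{LR}$ (using $R\neq\emptyset$ and $L\neq\emptyset$, which hold since $L\supseteq\{1\}$, $R\supseteq\{N+1\}$), so both enlargement tests fail. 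Hence on the iteration that triggers the exit neither $L$ nor $R$ changes, and $y$ genuinely equals $y^{LR}(x)$ for the terminal $(L,R)$.

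With this in hand the remaining two conditions are immediate. By Lemma \ref{lem_polar} the polar is $P^\circ=(-\infty,0]^n\cap\spa\{v\}^\perp$, so condition (2), $x-y\in P^\circ$, splits into orthogonality to $v$ and nonpositivity of every coordinate: the former is the first identity of Lemma \ref{lem_projection_kernels}, $\langle x-y,v\rangle=0$, and the latter is Lemma \ref{lem_projection_increases}, $x_j\le y_j$ for all $j$. Condition (3), $\langle x-y,y\rangle=0$, is the second identity of Lemma \ref{lem_projection_kernels}. Proposition \ref{prop_projection} then yields $y=\Pi_P(x)$. The theorem is therefore an assembly step; the real work sits in the lemmas, and I expect the main obstacle to lie not here but in Lemma \ref{lem_projection_increases}, which rests on the monotonicity of the weighted averages $\Sigma_{LR}$ established in Lemma \ref{lem_average_monotone} together with the fact that the greedy absorption order prescribed by the algorithm terminates at a pair $(L,R)$ whose candidate already lands in $P$.
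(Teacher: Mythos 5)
Your proposal is correct and follows essentially the same route as the paper: termination via Lemma \ref{lem_alg2_term}, then verification of the three conditions of Proposition \ref{prop_projection} using Lemma \ref{lem_polar}, Lemma \ref{lem_projection_increases}, and Lemma \ref{lem_projection_kernels}. Your additional observation that the returned $y$ genuinely equals $y^{LR}(x)$ for the terminal pair $(L,R)$ (since membership of $y^{LR}$ in $P$ forces both enlargement tests to fail) is a small point the paper's proof leaves implicit, and it is a worthwhile clarification rather than a deviation.
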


\begin{proof}
By Lemma \ref{lem_alg2_term}, Algorithm \ref{alg_2} terminates.
To prove that $y$ is indeed the projection of $x$ onto $P$, we verify the conditions in Proposition \ref{prop_projection}.
Condition 1. is met, since the algorithm terminates.
By Lemma \ref{lem_polar}, $P^\circ=(-\infty,0]^n\cap \ker\{v\}$.
From Lemma \ref{lem_projection_increases}, we see that $x-y\in (-\infty,0]^n$.
and by Lemma \ref{lem_projection_kernels} $\langle x-y,v\rangle=0$.
Finally, by Lemma \ref{lem_projection_kernels}, $\langle x-y,y\rangle =0$.
We conclude that $y$ is the projection of $x$ onto $P$.
The proof is complete.
\end{proof}

\end{appendix}

\end{document}